\DeclareMathOperator{\RR}{\mathbb{R}}
\DeclareMathOperator{\QQ}{\mathbb{Q}}
\DeclareMathOperator{\ZZ}{\mathbb{Z}}
\DeclareMathOperator{\BB}{\mathbb{B}}
\DeclareMathOperator{\GC}{\mathcal{G}}
\DeclareMathOperator{\RC}{\mathcal{R}}
\DeclareMathOperator{\BC}{\mathcal{B}}
\DeclareMathOperator{\NC}{\mathcal{N}}
\DeclareMathOperator{\VC}{\mathcal{V}}
\DeclareMathOperator{\TC}{\mathcal{T}}
\DeclareMathOperator{\PC}{\mathcal{P}}
\DeclareMathOperator{\DC}{\mathcal{D}}
\DeclareMathOperator{\SC}{\mathcal{S}}
\DeclareMathOperator{\HC}{\mathcal{H}}
\DeclareMathOperator{\MC}{\mathcal{M}}
\DeclareMathOperator{\FC}{\mathcal{F}}
\DeclareMathOperator{\KC}{\mathcal{K}}
\DeclareMathOperator{\UC}{\mathcal{U}}
\DeclareMathOperator{\XC}{\mathcal{X}}
\DeclareMathOperator{\JC}{\mathcal{J}}
\DeclareMathOperator{\IC}{\mathcal{I}}
\DeclareMathOperator{\CCal}{\mathcal{C}}
\DeclareMathOperator{\QC}{\mathcal{Q}}
\DeclareMathOperator{\YC}{\mathcal{Y}}
\DeclareMathOperator{\BS}{\mathscr{B}}
\DeclareMathOperator{\FS}{\mathscr{F}}
\DeclareMathOperator{\NS}{\mathscr{N}}
\DeclareMathOperator{\PS}{\mathscr{P}}
\DeclareMathOperator{\TS}{\mathscr{T}}
\DeclareMathOperator{\rank}{rank}
\DeclareMathOperator{\cone}{cone.\!hull}
\DeclareMathOperator{\conv}{conv\!.\!hull}
\DeclareMathOperator{\linh}{span}
\DeclareMathOperator{\vertex}{vert}
\DeclareMathOperator{\poly}{poly}
\DeclareMathOperator{\inter}{int}
\DeclareMathOperator{\vol}{vol}
\DeclareMathOperator{\diam}{diam}
\DeclareMathOperator{\paral}{Par}
\DeclareMathOperator{\proj}{proj}
\DeclareMathOperator{\extreme}{ext}
\DeclareMathOperator{\dist}{dist}
\DeclareMathOperator{\sphere}{\mathbb{S}}
\DeclareMathOperator{\average}{avg}
\DeclareMathOperator{\BUnit}{\mathbf 1}
\DeclareMathOperator{\BZero}{\mathbf 0}
\newcommand*{\intint}[2][1]{\left\{#1,\, \dots,\, #2\right\}}
\newcommand{\GribanovAdd}[1]{{\color{blue}#1}}
\newcommand\restr[2]{{
  \left.\kern-\nulldelimiterspace 
  #1 
  \vphantom{\big|} 
  \right|_{#2} 
  }}
\DeclareMathOperator{\subdiv}{Subdiv}
\renewcommand{\GribanovAdd}[1]{#1}
\begin{document}
\title{On the Vertices of Delta-modular Polyhedra}
%
%
\author{
Bludov Mikhail\inst{1} \and
Gribanov Dmitry\inst{1,3} \and
Klimenko Maxim\inst{1}
\and
Kupavskii Andrey\inst{1} \and
L\'angi Zsolt\inst{2} \and
Rogozin Alexander\inst{1} \and
Voronov Vsevolod\inst{1,4}}
\authorrunning{F. Author et al.}
%
\institute{Laboratory of Discrete and Combinatorial Optimization, Moscow Institute of Physics and Technology\\
\email{michaelbludov@gmail.com, gribanov.dv@mipt.ru, klimkomx@gmail.com, kupavskii@ya.ru, aleksandr.rogozin@phystech.edu, v-vor@yandex.ru}
\and
Bolyai Institute, University of Szeged, Szeged, Hungary and HUN-REN Alfr\'ed R\'enyi Institute of Mathematics, Budapest, Hungary\\
\email{zlangi@server.math.u-szeged.hu}
\and
Laboratory of Algorithms and Technologies for Network Analysis, HSE University
\and
Caucasus Mathematical Center of Adyghe State University \\
}
\maketitle              
\begin{abstract}
Let $\PC$ be a polytope defined by the system $A x \leq b$, where $A \in \RR^{m \times n}$, $b \in \RR^m$, and $\rank A = n$. We give a short geometric proof of the following tight upper bound on the number of vertices of $\PC$:
\begin{equation*}
    \abs{\vertex(\PC)} \leq n! \cdot \frac{\Delta}{\Delta_{\average}} \cdot \vol \BB_2 \sim \frac{1}{\sqrt{\pi n}} \cdot \left(\frac{2 \pi}{e}\right)^{n/2} \cdot n^{n/2} \cdot \frac{\Delta}{\Delta_{\average}},
\end{equation*}
where $\Delta$ is the maximum absolute value of $n \times n$ subdeterminants of $A$, and $\Delta_{\average}$ is the average absolute value of subdeterminants of $A$ corresponding to a triangulation of $\PC$'s normal fan. Assuming that $A$ is integer, such polyhedra are called $\Delta$-modular polyhedra. Note that in the integer case, the bound can be simplified via the inequality $\Delta_{\average} \geq \Delta_{\min} \geq 1$, where $\Delta_{\min}$ is the minimum absolute value of subdeterminants of $A$ corresponding to feasible bases of $A x \leq b$. For this, we prove and use a variant of Macbeath's theorem on approximation of centrally symmetric convex bodies by centrally symmetric polytopes.

Additionally, we give a direct argument based on prior results in the field, showing that the graph diameter of $\PC$ is bounded by 
$O\bigl(n^3 \cdot \frac{\Delta}{\Delta_{\min}} \cdot \ln (n \frac{\Delta}{\Delta_{\min}}) \bigr)$. 
Thus, both characteristic of $\PC$ are linear in $\Delta/\Delta_{\min}$. 
Previously, only quadratic bounds were known in this parameter.

From an algorithmic perspective, we demonstrate that:
\begin{enumerate}
    \item Given $A \in \QQ^{m \times n}$, $b \in \QQ^m$, and an initial feasible solution to $A x \leq b$, the convex hull of $\PC$ can be constructed in $O(n)^{n/2} \cdot m^2 \cdot \frac{\Delta}{\Delta_{\average}}$ operations. For simple polyhedra, the dependence on $m$ reduces to linear;
    
    \item Given $A \in \ZZ^{m \times n}$ and $b \in \QQ^m$, the integer point count $\abs{\PC \cap \ZZ^n}$ can be computed in $O(n)^n \cdot \frac{\Delta^4}{\Delta_{\average}}$ arithmetic operations.
\end{enumerate}

\keywords{Convex hull  \and Integer point counting \and Barvinok's algorithm \and Upper bound theorem \and Polyhedral diameter \and Bounded subdeterminants \and Delta-modular matrices \and Delta-modular polyhedra.}
\end{abstract}

\section{Introduction}

Let $\PC$ be a polyhedron defined by the system $A x \leq b$, where $A \in \RR^{m \times n}$ and $b \in \RR^m$. \GribanovAdd{Throughout our work, we consider only \emph{full-dimensional} and \emph{pointed polyhedra}, meaning $\dim(\PC) = n$ and $\PC$ contains no lines. The latter condition is equivalent to $\rank(A) = n$.}
Additionally, we assume all inequalities in the system $A x \leq b$ are facet-defining, meaning the system contains no redundant inequalities. \GribanovAdd{In these assumptions, $(n,m)$ denotes the dimension and number of facets in $\PC$ respectively.}

The seminal Upper Bound Theorem by McMullen \cite{MaxFacesTh} states that the vertex count reaches its maximum for the class of polytopes dual to cyclic polytopes. Combining this with the formula from \cite[Section~4.7]{Grunbaum} for the facet count of cyclic polytopes yields:
\GribanovAdd{
\begin{equation}\label{eq:McMullenUB}
    \abs{\vertex(\PC)} = O\left(\frac{m}{n}\right)^{n/2}.
\end{equation}
}

\GribanovAdd{But, in many scenarios and especially in the context of ILP, the resulting polyhedra are very far from cyclic. In many cases, it is useful to look at the parameterization by $\Delta/\Delta_{\average}$ or $\Delta/\Delta_{\min}$, where the parameters $\Delta$, $\Delta_{\min}$ and $\Delta_{\average}$ are defined by the following way:
\begin{gather*}
    \Delta(A) = \max\left\{ \abs{\det A_{\BC}} \colon \BC \in \binom{\intint m}{n} \right\}, \\
    \Delta_{\average}(\PC,A) = \max \left\{ \frac{1}{\abs{\TS}} \cdot \sum_{\BC \in \TS} \abs{\det A_{\BC}} \colon \text{$\TS$ -- is a triangulation of the $\PC$'s normal fan} \right\}, \\
    \Delta_{\min}(\PC,A) = \max\left\{ \min\limits_{\BC \in \TS} \abs{\det A_{\BC}} \colon \text{$\TS$ -- is a triangulation of the $\PC$'s normal fan} \right\}.
\end{gather*}
Here we identify the simplicial cones which form a triangulation $\TS$ with the corresponding feasible bases of the system $A x \leq b$ (see Section \nameref{sec:prelim} for the formal definitions of the normal fan of $\PC$ and its triangulation).

\begin{remark}\label{rm:delta_avg_def}
    Assuming that $\PC$ is simple, the normal fan of $\PC$ is simplicial and coincides with the set of feasible bases of the system $A x \leq b$ denoted by $\BS$. In this case, the definitions of $\Delta_{\min}$ and $\Delta_{\average}$ can be simplified by the following way:
    \begin{gather*}
        \Delta_{\average}(\PC,A) = \frac{1}{\abs{\BS}} \sum_{\BC \in \BS} \abs{\det A_{\BC}},\\
        \Delta_{\min}(\PC,A) = \min\limits_{\BC \in \BS} \abs{\det A_{\BC}}.
    \end{gather*}
\end{remark}
}

\GribanovAdd{
As our first contribution, we establish the following bound for the vertex count of $\PC$ (the proof is given in \Vref{proof:th:vertexUB}).
\begin{restatable}{theorem}{vertexUB}
\label{th:vertexUB}
    Let $\PC \subseteq \RR^n$ be a pointed $n$-dimensional polyhedron defined by a system $A x \leq b$. Denote $\Delta = \Delta(A)$ and $\Delta_{\average} = \Delta_{\average}(\PC,A)$, then  
    \begin{multline*}
        \abs{\vertex(\PC)} \leq n! \cdot \frac{\Delta}{\Delta_{\average}} \cdot \vol(\BB_2^n) \sim \\
        \sim \frac{1}{\sqrt{\pi n}} \cdot \left(\frac{2 \pi}{e}\right)^{n/2} \cdot n^{n/2} \cdot \frac{\Delta}{\Delta_{\average}} = O(n)^{n/2} \cdot \frac{\Delta}{\Delta_{\average}}.
    \end{multline*}
\end{restatable}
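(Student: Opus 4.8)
The plan is to transfer everything to the normal fan $\NC(\PC)$, whose maximal cones are $C_v = \cone\{a_i : i \in I_v\}$, in bijection with $v \in \vertex(\PC)$, where $I_v$ indexes the facets through $v$. Because the system is irredundant, the rays $\RR_{\geq 0} a_i$, $i \in I_v$, are exactly the extreme rays of $C_v$; hence each $a_i$ ($i \in I_v$) is a vertex of the polytope $\Pi_v := \conv(\{0\} \cup \{a_i : i \in I_v\})$, and the restriction to $C_v$ of any triangulation $\TS$ of $\NC(\PC)$ is a triangulation of $\Pi_v$ from its apex $0$ into simplices $\conv(\{0\} \cup \{a_i : i \in \BC\})$ of volume $\tfrac{1}{n!}\abs{\det A_\BC}$, $\BC \in \TS|_{C_v}$. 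Summing over $v$ and over the simplices gives the key identity
\[
\sum_{\BC \in \TS} \abs{\det A_\BC} = n! \sum_v \vol(\Pi_v) =: n!\,\mathbb{V},
\]
which is \emph{independent of the triangulation} $\TS$. Consequently $\Delta_{\average} = \max_\TS \tfrac{n!\,\mathbb{V}}{\abs{\TS}} = \tfrac{n!\,\mathbb{V}}{\min_\TS \abs{\TS}}$, and since every maximal cone contributes at least one simplex to any triangulation, $\abs{\vertex(\PC)} \leq \min_\TS \abs{\TS} = \tfrac{n!\,\mathbb{V}}{\Delta_{\average}}$. So the theorem reduces to the geometric estimate $\mathbb{V} \leq \Delta \cdot \vol(\BB_2^n)$.

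For this, set $Q := \conv(\{0\} \cup \{a_1, \dots, a_m\})$, which is full-dimensional as $\rank A = n$. The polytopes $\Pi_v$ lie in $Q$ and have pairwise disjoint interiors (their cones meet only in lower-dimensional faces of the fan), so $\mathbb{V} \leq \vol(Q)$. Moreover, letting $V_0(Q)$ denote the largest volume of a simplex in $Q$ having a vertex at the origin: the map $(x_1,\dots,x_n) \mapsto \tfrac{1}{n!}\abs{\det(x_1,\dots,x_n)}$ is convex in each argument, hence maximized over $Q^n$ at vertices of $Q$, and since the vertex $0$ makes the determinant vanish, this maximum equals $\tfrac{1}{n!} \max_{\BC \in \binom{\intint m}{n}} \abs{\det A_\BC} = \Delta / n!$. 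So it suffices to prove the following lemma: \emph{for every convex body $Q$ and every point $p \in Q$, $\vol(Q) \leq n! \cdot \vol(\BB_2^n) \cdot V_p(Q)$}, where $V_p(Q)$ is the largest volume of a simplex in $Q$ with a vertex at $p$. Taking $p = 0$ then yields $\mathbb{V} \leq \vol(Q) \leq n!\,\vol(\BB_2^n)\cdot \Delta/n! = \Delta\,\vol(\BB_2^n)$, as required. (The lemma is sharp for ellipsoids with $p$ their center, which is the source of the bound's tightness; alternatively, a Macbeath-type polytopal approximation result can play the same role.)

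The lemma I would prove by Steiner symmetrization about hyperplanes through $p$, which preserves $\vol(Q)$. I claim it does not increase $V_p(Q)$. Given a simplex $\conv(\{p\} \cup \{x_1,\dots,x_n\})$ in the symmetrized body, write $x_i - p = y_i + t_i u$ with $y_i$ in the hyperplane's direction space and $u$ its unit normal; multilinearity of the determinant kills every term of $\det(x_1 - p,\dots,x_n - p)$ containing $u$ twice or more, as well as the $u$-free term (the $y_i$ being linearly dependent), leaving $\bigl\lvert \sum_i t_i D_i \bigr\rvert$ with $D_i$ depending only on the $y_i$. Since the fiber over $y_i + p$ is symmetric of some length $\ell_i$, this is at most $\tfrac12 \sum_i \abs{D_i} \ell_i$, which is in turn at most the maximum of $\bigl\lvert \sum_i t_i D_i \bigr\rvert$ over the original (equally long) fibers; so $V_p$ does not increase. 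Choosing a classical sequence of such symmetrizations converging (in Hausdorff distance) to the ball $\BB$ of radius $r$ centered at $p$ with $\vol(\BB) = \vol(Q)$, monotonicity together with continuity of $V_p$ at the limit gives $V_p(Q) \geq V_p(\BB) = r^n/n! = \vol(Q)/(n!\,\vol(\BB_2^n))$, which is the lemma. The main obstacle is exactly this final step — ensuring $p$ stays interior along the sequence, invoking convergence of Steiner symmetrizations to a ball centered at $p$, and verifying continuity of $V_p$ there — together with carefully justifying the triangulation-independence of $\sum_{\BC \in \TS}\abs{\det A_\BC}$ and handling the unbounded case, where $0 \in \partial Q$, which is covered by continuity of $p \mapsto V_p(Q)$.
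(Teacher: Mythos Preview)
Your proof is correct and follows the same overall route as the paper: pass to the normal fan, note that $\sum_{\BC\in\TS}\lvert\det A_{\BC}\rvert$ equals $n!$ times the volume of the underlying non-convex polytope and is therefore triangulation-independent (so $\Delta_{\average}(\PC,A)=n!\,\mathbb{V}/\min_{\TS}\lvert\TS\rvert$ and $\lvert\vertex(\PC)\rvert\le\min_{\TS}\lvert\TS\rvert$), and then establish the geometric core $\vol(Q)\le\Delta(Q)\,\vol(\BB_2^n)$ for a compact $Q\ni 0$. This last inequality is exactly the paper's Corollary~\ref{cor:vol_Delta_body}, and your lemma with $p=0$.

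The one substantive difference is in how that inequality is proved. The paper first symmetrizes to $\KC'=\conv(\KC\cup(-\KC))$, proves a \emph{symmetric} Macbeath theorem ($\psi_{2n}(\KC')\ge\psi_{2n}(\BB_2^n)$) via Steiner symmetrization, and uses that the best $o$-symmetric $2n$-vertex polytope in $\KC'$ is a cross-polytope $A\cdot\BB_1^n$ with $\lvert\det A\rvert=\Delta(\KC')=\Delta(\KC)$. You instead show directly that $V_p$ is non-increasing under Steiner symmetrization about hyperplanes through $p$; your determinant expansion along the normal direction and the fiber-shifting argument are correct and give precisely this. Your route is more direct, avoiding the detour through central symmetry; the paper's route has the advantage that the passage $\KC\mapsto\KC'$ automatically places $0$ at the center, sidestepping the $p\in\partial Q$ concern you flag, and it isolates a reusable symmetric-Macbeath statement. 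Both arguments are Steiner-symmetrization proofs of Macbeath type and deliver the identical sharp constant.
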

Here $\BB_2^n$ denotes the standard Euclidean ball.
This inequality is tight in the following sense (the proof is given in \Vref{proof:th:vertexUB}). 
\begin{restatable}{proposition}{vertexLB}
\label{prop:vertexLB}
    There exists a sequence of pointed $n$-dimensional polytopes $\PC_k \subseteq \RR^n$ defined by systems $A_k x \leq b_k$ such that:
    \begin{gather*}
        \lim\limits_{k \to \infty} \left(\vertex(\PC_k) - n! \cdot \frac{\Delta_k}{\Delta_{\average,k}} \cdot \vol \BB_2^n \right) = 0, \quad\text{and}\\
        \lim\limits_{k \to \infty} \frac{\Delta_k}{\Delta_{\average,k}} = \infty,
    \end{gather*}
    where $\Delta_k = \Delta(A_k)$ and $\Delta_{\average,k} = \Delta_{\average}(\PC_k,A_k)$. 
\end{restatable}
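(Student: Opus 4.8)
\textbf{Proof proposal for Proposition~\ref{prop:vertexLB}.}
The plan is to choose $\PC_k$ so that every inequality used in the proof of Theorem~\ref{th:vertexUB} becomes an asymptotic equality; concretely, $\PC_k$ should be — up to an affine normalization — a polytope whose facet normals lie densely and near-optimally on a sphere, so that the cones of the normal fan become uniformly thin and the simplices associated with them nearly tile the normalizing ball. The planar case is the model: for $4 \mid m$ let $\PC_m$ be the regular $m$-gon circumscribed about $\BB_2^2$, with $A_m$ having the $m$-th roots of unity as rows and $b_m = \BUnit$. Then every feasible basis is a pair of consecutive normals, so $|\det (A_m)_\BC| = \sin(2\pi/m)$ for all feasible $\BC$, whereas $\Delta(A_m) = 1$ is attained on a pair of normals at right angles. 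Hence $\Delta_{\average,m} = \sin(2\pi/m)$, so $\Delta_m/\Delta_{\average,m} = 1/\sin(2\pi/m) \to \infty$, and the gap in Theorem~\ref{th:vertexUB} equals $m - 2\pi/\sin(2\pi/m) = -\tfrac{2\pi^2}{3m} + O(m^{-3}) \to 0$.

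For general $n$ I would build $\PC_k$ as follows. Pick unit vectors $u_1, \dots, u_{m_k} \in \sphere^{n-1}$ becoming equidistributed with mesh $\to 0$, arranged so that $\PC_k^\ast := \conv\{u_j\}$ is an asymptotically \emph{optimal} polytopal approximation of $\BB_2^n$ in the quantitative sense supplied by the variant of Macbeath's theorem proved in the paper, and containing a fixed orthonormal frame so that $\Delta(A_k) = 1$; let $\PC_k$ be the polytope with facet normals $u_j$ and right-hand side $\BUnit$. The easy part is the second limit: triangulating $\PC_k^\ast$ into pyramids over its facets gives $\sum_\BC |\det (A_k)_\BC| = n!\,\vol(\PC_k^\ast)$, hence $\Delta_{\average,k} = n!\,\vol(\PC_k^\ast)/\vertex(\PC_k)$; since $\vol(\PC_k^\ast)$ is bounded and $\vertex(\PC_k) \to \infty$ we get $\Delta_{\average,k} \to 0$, so $\Delta_k/\Delta_{\average,k} \to \infty$.

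The real work is the first limit. Rewriting,
\[
  \vertex(\PC_k) - n! \cdot \frac{\Delta_k}{\Delta_{\average,k}} \cdot \vol(\BB_2^n) = \vertex(\PC_k)\left(1 - \frac{\Delta_k \vol(\BB_2^n)}{\vol(\PC_k^\ast)}\right),
\]
and since Theorem~\ref{th:vertexUB} already forces this to be $\le 0$, it remains to show that $\vol(\PC_k^\ast)$ approaches $\Delta_k \vol(\BB_2^n)$ within an error $o(1/\vertex(\PC_k))$. This is exactly where one needs the \emph{sharp} form of Macbeath's theorem: one expands the approximation defect $\vol(\BB_2^n) - \vol(\PC_k^\ast)$ and the defect of $\Delta_k$ coming from the near-orthonormal frame to leading order and checks that, for the Macbeath-extremal configuration, the two cancel, leaving a genuinely smaller remainder. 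I expect this step — verifying that an approximation-extremal polytope is simultaneously extremal for the subdeterminant ratio $\sum_\BC |\det A_\BC| / \Delta$, so that the gap closes fast enough — to be the main obstacle; it is presumably why the paper establishes a quantitative rather than a merely qualitative version of Macbeath's theorem, and it may force a careful tuning of the construction (for instance, perturbing the radii of the ``non-round'' normals while keeping the orthonormal frame, hence $\Delta_k$, fixed) in order to balance the two defects.
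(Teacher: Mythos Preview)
Your overall strategy — take the polar of a simplicial polytope with vertices on $\sphere^{n-1}$, so that the normal fan of the polar coincides with the face fan of the primal, and exploit the exact identity $\abs{\vertex(\PC_k)}\cdot\Delta_{\average,k}=n!\cdot\vol(\text{inscribed body})$ — is exactly what the paper does, and your $2$-dimensional warm-up with the regular $m$-gon is correct (and in fact tidier than the paper's $n=2$ specialisation).

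For general $n$, however, the paper's construction is far more concrete than yours and sidesteps the optimisation you anticipate. It starts from the regular simplex inscribed in $\sphere^{n-1}$, repeatedly applies the stellar subdivision at barycenters (replace each $(n{-}1)$-simplex by the $n$ simplices obtained by coning the barycenter over its facets), and radially projects every new vertex back to the sphere; the polytopes of the proposition are the polars of these subdivided objects. The only analytic input is a short lemma that the iterated barycenters become dense in the original simplex, hence the projected vertex set becomes dense in $\sphere^{n-1}$; this single density statement gives both $\Delta(R_k)\to 1$ and $\vol(\norm{\FS_k}_{R_k})\to\vol(\BB_2^n)$. The paper never forces $\Delta_k=1$ by inserting an orthonormal frame, and it never selects a ``Macbeath-extremal'' configuration.

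On your main worry: Macbeath's theorem is invoked in the paper \emph{only} for the upper bound (Theorem~\ref{th:vertexUB}); the lower-bound construction makes no appeal to it whatsoever, so your plan to balance two defects via a sharp Macbeath is pointed at the wrong tool. You are nonetheless right that the identity together with the two limits yields only the asymptotic equivalence $\abs{\vertex(\PC_k)}\sim n!\,(\Delta_k/\Delta_{\average,k})\,\vol(\BB_2^n)$, not the literal ``difference $\to 0$'' — and the paper's own argument (its Proposition~\ref{prop:tightness_fans}) stops at precisely this $\sim$ statement without supplying a rate. So the discrepancy you flag between ``ratio $\to 1$'' and ``difference $\to 0$'' is real, but it is a gap between the proposition as stated and the paper's proof as well, not something the paper closes with machinery you have overlooked.
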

In fact, motivated by ILP applications described in \Cref{sec:algo_implications}, we establish a more general result concerning the count of $n$-dimensional faces in polyhedral fans (see \Cref{cor:vol_Delta_fan} and \Cref{prop:tightness_fans}). The result follows by utilizing properties of "totally unimodular" convex bodies described in \Cref{prop:totally_unimodular_set}, followed by an application of a symmetric version of Macbeath's theorem \cite{Macbeath}. In original form, Macbeath's theorem states that the Euclidean ball is the hardest to approximate by polytopes. The symmetric version formulated and proved as \Cref{th:Macbeath} states that the Euclidean ball is the hardest to approximate even by centrally symmetric polytopes.
}

\GribanovAdd{
We can compare \Cref{th:vertexUB} with the McMullen upper bound \eqref{eq:McMullenUB} by the following way. Assuming that $A$ is an integer matrix, it is known that $m$ can be bounded by a function depending on $n$ and $\Delta$. For instance:
\begin{itemize}
    \item $m = O(n^2 \cdot \Delta^2)$ (Lee~et~al.~\cite{ModularDiffColumns})
    \item $m = O(n^4 \cdot \Delta)$ (Averkov \& Schymura~\cite{DiffColumnsOther})
    \item $m = O(n^2 + n \cdot \Delta^7)$ (Paat~et~al.~\cite{DifferingSeparated})
\end{itemize}
Furthermore, \cite{ModularDiffColumns} provides examples of polyhedra with $m = \Theta(n^2 + n \cdot \Delta)$. Substitution of one of these expressions into McMullen's upper bound \eqref{eq:McMullenUB} provides upper bounds for $\abs{\vertex(\PC)}$ depending only on $n$ and $\Delta$. However, even in the case $m = \Theta(n^2 + n \cdot \Delta)$, \Cref{th:vertexUB} works better.
}

\GribanovAdd{
Another problem related to the use of parameter $\Delta$ and employing similar analysis methods is the problem of estimating the diameter of a polyhedral graph $\GC(\PC)$. Recall that graph $\GC(\PC)$ is the $1$-skeleton of polyhedron $P$, i.e., the graph formed by its vertices and edges. With respect to our result about the number of vertices, the results concerning the diameter of $\GC(\PC)$ use the same properties of "totally unimodular" convex bodies described in \Cref{prop:totally_unimodular_set} and the same construction for building the lower bound.

As the main in this direction, we present the following theorem, which is built on results from \cite{SubdeterminantsDiameter,ShadowSimplexForCurved} (the proof is given in \Vref{sec:diam_UB}).
\begin{restatable}{theorem}{diamUB}
\label{th:diamUB}
    Let $\PC \subseteq \RR^n$ be a pointed $n$-dimensional polyhedron defined by a system $A x \leq b$. Denote $\Delta = \Delta(A)$ and $\Delta_{\min} = \Delta_{\min}(\PC,A)$, then
    \begin{equation*}
        \diam\bigl(\GC(\PC)\bigr) = O\left(n^3 \cdot \frac{\Delta}{\Delta_{\min}} \cdot \ln \bigl(n \frac{\Delta}{\Delta_{\min}}\bigr) \right).
    \end{equation*}
\end{restatable}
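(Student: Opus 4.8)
\emph{Proof plan.} The plan is to obtain the bound by feeding a sharp geometric estimate into the shadow--simplex diameter bounds of Bonifas~et~al.~\cite{SubdeterminantsDiameter} and Dadush and H\"ahnle~\cite{ShadowSimplexForCurved}. Those works bound $\diam(\GC(\PC))$ by $O\bigl(n^{2}\,\lambda^{-1}\ln(n\lambda^{-1})\bigr)$, where $\lambda$ is a curvature / ``$\delta$-distance'' parameter of the normal fan of $\PC$ -- roughly, a lower bound on the spherical inradii of the maximal normal cones, equivalently on the distance between any facet normal at a vertex and the linear span of the remaining facet normals at that vertex. For an integer $\Delta$-modular system the crude estimate $\lambda^{-1}=O(n\Delta^{2})$ reproduces the previously known quadratic bound; the point of the theorem is to replace this by $\lambda^{-1}=O\bigl(n\,\Delta/\Delta_{\min}\bigr)$.

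First I would reduce to the case that $\PC$ is simple. For a generic perturbation of the right-hand side, $\PC_{\varepsilon}=\{x \colon Ax\le b+\varepsilon w\}$ is a simple polyhedron with the same recession cone; its graph $\GC(\PC_{\varepsilon})$ edge-contracts onto $\GC(\PC)$ (contract the edges that degenerate as $\varepsilon\to 0$), so $\diam(\GC(\PC))\le\diam(\GC(\PC_{\varepsilon}))$, while $A$ and hence $\Delta(A)$ are unchanged and the normal fan of $\PC_{\varepsilon}$ is a (regular) simplicial refinement of that of $\PC$. Choosing $w$ so that this refinement realizes a triangulation $\TS$ of the normal fan with $\min_{\BC\in\TS}\abs{\det A_{\BC}}=\Delta_{\min}(\PC,A)$ yields $\Delta_{\min}(\PC_{\varepsilon},A)\ge\Delta_{\min}(\PC,A)$; this is possible whenever an optimal triangulation of the normal fan is regular, and otherwise one passes to the best regular triangulation (losing at most a constant factor) or argues directly on the triangulated normal cones. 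After this reduction it suffices to lower-bound $\lambda$ for simple $\PC$.

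For simple $\PC$, the normal cone $N_{v}$ at a vertex $v$ is the simplicial cone generated by the rows $a_{i}$, $i\in\BC_{v}$, of the corresponding feasible basis, with $\Delta_{\min}\le\abs{\det A_{\BC_{v}}}\le\Delta$. The key step is to apply the ``totally unimodular body'' estimate of \Cref{prop:totally_unimodular_set} to the (appropriately normalized) generators of $N_{v}$ -- the same machinery used in the proof of \Cref{th:vertexUB}, but now extracting a lower bound on the relevant width of $N_{v}$ instead of a lower bound on a volume. This should give $\lambda\ge\Omega\bigl(\tfrac{1}{n}\,\Delta_{\min}/\Delta\bigr)$ at every vertex; the ratio $\Delta/\Delta_{\min}$ is precisely the worst ``aspect ratio'' of a simplicial cone of the triangulated fan, which is what the proposition controls, and this is what saves the extra factor of $\Delta$ present in the crude estimate. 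Taking the minimum over vertices and substituting $\lambda^{-1}=O(n\,\Delta/\Delta_{\min})$ into the diameter bound of \cite{ShadowSimplexForCurved} gives $O\bigl(n^{3}\,(\Delta/\Delta_{\min})\,\ln(n\,\Delta/\Delta_{\min})\bigr)$, as stated. (A matching lower bound is obtained from the same family of examples as in \Cref{prop:vertexLB}.)

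The hard part will be the geometric estimate of the previous paragraph: extracting from \Cref{prop:totally_unimodular_set} a lower bound on the precise curvature / $\delta$-distance quantity required by \cite{ShadowSimplexForCurved} that scales like $\Delta_{\min}/(n\Delta)$ rather than like $\Delta_{\min}/(n\Delta^{2})$ -- i.e.\ genuinely gaining one factor of $\Delta$ over the naive row-by-row bound -- and doing so in a way that is insensitive to the normalization of the rows of $A$ (individual subdeterminants change under rescaling of single inequalities, so the estimate must be anchored to the given representation). A secondary obstacle is the regularity issue in the reduction to simple polyhedra, namely ensuring that the perturbation does not decrease $\Delta_{\min}$ below $\Delta_{\min}(\PC,A)$; this is automatic when an optimal triangulation of the normal fan can be chosen regular, and otherwise requires the direct argument on triangulated normal cones alluded to above.
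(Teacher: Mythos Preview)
Your proposal identifies the correct ingredients---the Dadush--H\"ahnle $\tau$-wide diameter bound and the totally-unimodular trick of \Cref{prop:totally_unimodular_set}---and the correct target estimate $\delta \ge \Omega(\Delta_{\min}/(n\Delta))$. The paper's execution differs from yours in two ways that remove precisely the obstacles you flag as ``hard parts''.

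First, instead of applying \Cref{prop:totally_unimodular_set} locally at each vertex, the paper applies a single \emph{global} linear transformation: pick an $n\times n$ submatrix $B$ of $A$ with $\abs{\det B} = \Delta$ and replace $\PC$ by $\PC' = \{x : AB^{-1}x \le b\}$. Since $\PC' = B\PC$, the graphs coincide and $\diam(\GC(\PC)) = \diam(\GC(\PC'))$. The matrix $A' = AB^{-1}$ has \emph{every} square subdeterminant bounded by $1$ in absolute value (this is exactly \Cref{prop:totally_unimodular_set}), while for each basis $\BC$ in an optimal triangulation $\TS$ one has $\abs{\det A'_{\BC}} \ge \Delta_{\min}/\Delta$. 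The $\delta$-distance computation is then a one-liner (\Cref{lm:transfomed_poly_delta_distance}): for a row $v$ of $M = A'_{\BC}$ and the corresponding adjugate column $u$, one has $\norm{v}_\infty, \norm{u}_\infty \le 1$ (the latter because the entries of the adjugate are $(n-1)\times(n-1)$ minors) and $\abs{\langle v,u\rangle} = \abs{\det M} \ge \Delta_{\min}/\Delta$, whence $\sin\varphi \ge \Delta_{\min}/(n\Delta)$. This is exactly the ``anchored, insensitive to row scaling'' estimate you were worried about---all scaling is absorbed into the single transform $B^{-1}$.

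Second, the paper does \emph{not} perturb to a simple polyhedron. It works directly with a triangulation $\TS$ of the normal fan of $\PC'$: each (possibly non-simplicial) normal cone contains some simplicial cone from $\TS$, and $\tau$-wideness is trivially inherited by supercones. Thus \Cref{lm:diam_prop_transition} applied to any $\BC\in\TS$ sitting inside a normal cone shows that cone is $(\delta/n)$-wide, and \Cref{th:tau_wide_diam} finishes. This sidesteps your regularity concern entirely---the optimal triangulation need not arise from any perturbation of $b$.
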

While the dependence on $\frac{\Delta}{\Delta_{\min}}$ is provably optimal for $2$-dimensional polyhedra, for general $n$ we only construct examples with sublinear dependence on $\frac{\Delta}{\Delta_{\min}}$ (the proof is given in \Vref{proof:prop:diamLB}).
\begin{restatable}{proposition}{diamLB}
\label{prop:diamLB}
    There exists a sequence of pointed $n$-dimensional polytopes $\PC_k \subseteq \RR^n$ defined by systems $A_k x \leq b_k$ such that:
    \begin{equation*}
        \diam\bigl(\GC(\PC_k)\bigr) = 2^{k+1} - 1 = 2 \left(\frac{\Delta_k}{\Delta_{\min,k}}\right)^{1/\log_2 n} - 1,
    \end{equation*}
    where $\Delta_k = \Delta(A_k)$ and $\Delta_{\min,k} = \Delta_{\min}(\PC_k,A_k)$.
\end{restatable}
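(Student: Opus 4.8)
The plan is to build the polytopes $\PC_k$ by an explicit recursion in $\RR^n$: starting from a base $\PC_0$, one step $\PC_k \mapsto \PC_{k+1}$ should keep the ambient dimension equal to $n$, replace the combinatorial diameter $d$ by $2d+1$, keep $\Delta_{\min} = 1$, and multiply $\Delta$ by exactly $n$. For the base take $\PC_0 = \conv\{0, e_1, \dots, e_n\}$, a unimodular $n$-simplex: its graph $\GC(\PC_0)$ is complete, so $\diam\bigl(\GC(\PC_0)\bigr) = 1 = 2^1 - 1$, and every $n \times n$ subsystem of its (irredundant) facet description has determinant $\pm 1$, so $\Delta_0 = \Delta_{\min,0} = 1 = n^0$. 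Granting such a step, the recursions $d_{k+1} = 2 d_k + 1$ and $\Delta_{k+1} = n \Delta_k$ with $\Delta_{\min,k} \equiv 1$ give $\diam\bigl(\GC(\PC_k)\bigr) = 2^{k+1} - 1$ and $\Delta_k / \Delta_{\min,k} = n^k$. Since $(n^k)^{1/\log_2 n} = n^{k/\log_2 n} = 2^k$, this rewrites as $2^{k+1} - 1 = 2 \bigl(\Delta_k/\Delta_{\min,k}\bigr)^{1/\log_2 n} - 1$, which is the asserted identity, and $\Delta_k/\Delta_{\min,k} = n^k \to \infty$. So everything reduces to realizing one step.

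For the step I would take $\PC_{k+1} = \PC_k \cup g(\PC_k)$, the union of $\PC_k$ with an affine copy glued along a distinguished facet $F_k$, where $g$ is the reflection in $\affh(F_k)$ composed with a determinant-$1$ shear that fixes $\affh(F_k)$ pointwise and whose single free parameter is set equal to $n$; a minor device at the seam (a one-step-thin corridor, or a single extra truncation) supplies the $+1$ in the diameter recursion. The shape of $F_k$, equivalently the direction of the shear, is chosen so that the two pieces glue to a genuine $n$-dimensional convex polytope — a routine ridge-by-ridge local convexity condition — so that $\PC_{k+1}$ again carries a facet of the right type to iterate, and so that, recursively, $\PC_k$ has a vertex at distance $d_k$ from every vertex of $F_k$. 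Gluing along facets preserves full-dimensionality, pointedness and, by construction, irredundancy, so each $\PC_k$ is of the required form and the dimension never changes. The diameter count is then the easy half: every vertex of $\PC_{k+1}$ lies in one of the two copies, the vertex set of the seam is a cut set of $\GC(\PC_{k+1})$, and a shortest path crosses it at most once, so $\diam\bigl(\GC(\PC_{k+1})\bigr) \le d_k + 1 + d_k = 2d_k + 1$; the matching lower bound holds because the distinguished vertices of $\PC_k$ and of $g(\PC_k)$ are each at distance $d_k$ from the seam.

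The main obstacle is the subdeterminant recursion: producing a polytope whose $\Delta$ is exactly $n$ times larger while $\Delta_{\min}$ stays equal to $1$. The facets of $\PC_{k+1}$ are the facets of $\PC_k$ and of $g(\PC_k)$ other than $F_k$, plus the facets of the seam device, so an $n \times n$ subsystem either lies in one copy — where the magnitudes of determinants are unchanged, since $|\det g| = 1$ — or is \emph{mixed}, combining facet normals of $\PC_k$ with facet normals of $g(\PC_k)$. Using $|\det g| = 1$, a mixed determinant reduces to the determinant of an $n \times n$ matrix mixing facet normals of $\PC_k$ with images of such normals under the linear part of $g$; one must show, by a careful choice of $F_k$, of the shear parameter and of the normals of $\PC_k$, that the largest such determinant equals exactly $n \cdot \Delta_k$, the factor $n$ coming from the sheared rows and the residual cofactor reducing to a maximal subdeterminant of $A_k$. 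Dually, keeping $\Delta_{\min,k+1} = 1$ requires handling the fact that the vertices on $F_k$ become non-simple in $\PC_{k+1}$, so that $\Delta_{\min}$ is read off from a triangulation of the normal fan; one needs the doubled normal cones at those vertices to admit a triangulation into unimodular simplicial cones, so that no larger minimal determinant is forced. Balancing a deliberately large new $\Delta$ against $\Delta_{\min} = 1$ is the delicate point, and it is exactly what pins down the precise geometry of the family; it is the diameter-side counterpart of the tightness construction behind \Cref{prop:vertexLB}.
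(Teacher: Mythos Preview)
Your proposal is an outline, not a proof: the three load-bearing claims are each stated as a requirement and then left unverified. You call the subdeterminant recursion ``the main obstacle'' and describe what one ``must show'' --- that the mixed $n\times n$ minors of the glued system top out at exactly $n\cdot\Delta_k$, and that the doubled normal cones at the seam admit a unimodular triangulation so that $\Delta_{\min}$ stays $1$ --- but you do not show either. Nor do you verify that $\PC_k\cup g(\PC_k)$ is actually convex for your shear, or specify the ``seam device'' that produces the $+1$ in the diameter. These are not routine: a shear with a large parameter is precisely what tends to break local convexity at the ridges, and forcing the largest mixed minor to be \emph{exactly} $n\Delta_k$ (not just $\Theta(n\Delta_k)$) while simultaneously keeping every basis at a seam vertex unimodular is a genuine constraint you have not met.

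The paper avoids all of this by working on the normal-fan side rather than by gluing polytopes. It starts from the fan over the facets of a regular $n$-simplex and iterates a barycentric-type subdivision that replaces each maximal simplicial cone by $n$ subcones of equal volume. The graph recursion is then explicit --- each vertex of $\GC_{k-1}$ becomes a clique $K_n$, adjacent cliques are joined by a single edge --- and gives $\diam(\GC_k)=2\diam(\GC_{k-1})+1$ directly. The determinant ratio is automatic: all new rays lie in the convex hull of the old ones, so $\Delta(M_k)=\Delta(M_0)$ never moves, while equal-volume subdivision forces $\Delta_{\min}$ to drop by a factor of $n$ at each step, giving $\Delta/\Delta_{\min}=n^k$ with no bookkeeping. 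The polytopes $\PC_k$ are then obtained by polarity from an auxiliary simplicial polytope whose face fan is $\FS_k$, so convexity and pointedness are built in. Your gluing idea may be salvageable, but as written the hard parts are precisely the parts that are missing.
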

} 
Unfortunately, compared to the upper bound $O\left(n^3 \cdot \Delta^2 \cdot \ln (n \Delta) \right)$ for integer matrices\footnote{\GribanovAdd{In precise form, the upper bound by \cite{ShadowSimplexForCurved} is $O\left(n^3 \cdot \Delta_{n-1} \cdot \Delta_1 \cdot \ln (n \Delta_{n-1} \Delta_1) \right)$, where $\Delta_k$ (for $k \in \intint n$) denotes the maximum absolute value of $k \times k$ subdeterminants of $A$. In our work we replace $\Delta_{n-1} \cdot \Delta_1$ by $\Delta_n$.}} by Dadush \& H\"ahnle \cite{ShadowSimplexForCurved}, our upper bound is non-constructive\footnote{When discussing diameter computation, we additionally assume $A$ and $b$ are rational.}, as it requires knowledge of a submatrix $B \in \RR^{n \times n}$ of $A$ with $\abs{\det B}$ close to $\Delta$ - a computationally difficult task (see, e.g., Nikolov \cite{LargestSimplex_Nikolov}). We give a small survey about the diameter problem in \Cref{sec:related_work}.

\subsection{Algorithmic Implications}\label{sec:algo_implications}

\GribanovAdd{
One of our motivations comes from the problem of efficient counting of integer points inside polyhedra. Many known algorithms for this problem are based on the construction of a normal fan triangulation corresponding to $\PC$. In our work we provide a variant of such an algorithm, whose computational complexity is linear with respect to size of an output triangulation. On this way, our upper bound on size of polyhedral fans (see \Cref{cor:vol_Delta_fan}) yields the following result that can be interesting in its own. 
\begin{restatable}{theorem}{vertexEnumeration}
\label{th:vertex_enumeration}
    Let $\PC$ be a pointed polyhedron defined by the system $A x \leq b$, where $A \in \QQ^{m \times n}$ and $b \in \QQ^m$. Then, there exists an algorithm that enumerates the vertices of $\PC$ in
    \begin{equation*}
        O(n)^{n/2} \cdot m^2 \cdot \frac{\Delta}{\Delta_{\average}} + T_{\text{LP}} \quad\text{operations},
    \end{equation*}
    where $T_{\text{LP}}$ denotes the number of operations required to compute a rational feasible solution of $A x \leq b$. If $\PC$ is simple, the complexity bound improves to
    \begin{equation*}
        O(n)^{n/2} \cdot m \cdot \frac{\Delta}{\Delta_{\average}} + T_{\text{LP}} \quad\text{operations}.
    \end{equation*}
\end{restatable}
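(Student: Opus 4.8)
The plan is to reduce vertex enumeration to the construction of a triangulation of the normal fan of $\PC$, so that the bound of \Cref{cor:vol_Delta_fan} on the number of full-dimensional cones in such a triangulation controls the running time. First I would use the given feasible solution (obtained in $T_{\text{LP}}$ operations) together with standard LP machinery to move to a vertex $v_0$ of $\PC$ and to identify the local cone of $\PC$ at $v_0$; dually, this gives one full-dimensional cone $\sigma_0$ of the normal fan $\NC(\PC)$. The core of the algorithm is then a depth-first traversal of the normal fan: maintaining the tight inequalities at the current vertex, we pivot across facets of the current normal cone to reach adjacent cones (equivalently, we perform edge pivots in $\PC$), marking visited vertices to avoid revisiting. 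Since $\NC(\PC)$ is connected, this reaches every vertex. The subtlety is that when $\PC$ is not simple the normal cones are not simplicial, so at a degenerate vertex the naive pivoting can cycle or branch exponentially; to avoid this I would first triangulate each normal cone (e.g. by a pulling/placing triangulation induced by a generic linear functional, or lexicographically), and run the traversal on the resulting simplicial fan, which is exactly the object whose size is $\Delta/\Delta_{\average}$-controlled. Each simplicial cone corresponds to a feasible basis $\BC$ of $A x \leq b$, and distinct cones incident to the same vertex $v$ all share the point $v = A_{\BC}^{-1} b_{\BC}$, so the vertex count is no larger than the cone count.

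The key steps, in order: (1) compute a feasible point and pivot to an initial vertex $v_0$ and its normal cone $\sigma_0$; (2) fix a generic linear functional $\omega$ and use it to define, implicitly, a pulling triangulation of every normal cone, so that "the simplicial cone containing a given ray in a given normal cone" is well-defined and cheap to compute; (3) perform a DFS over the adjacency graph of the simplicial cones, where moving to a neighbour means solving one rank-one update of the current basis $A_{\BC}$ and checking feasibility/tightness against all $m$ rows; (4) whenever a new simplicial cone is reached, recover its vertex $v=A_{\BC}^{-1}b_{\BC}$ and output it if not already seen. The running time is (number of simplicial cones visited) $\times$ (cost per cone). By \Cref{cor:vol_Delta_fan} the number of simplicial cones is $O(n)^{n/2}\cdot \Delta/\Delta_{\average}$. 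The per-cone cost is dominated by checking the at most $n$ candidate pivots against all $m$ constraints, each check being an $O(n)$ or $O(\poly(n))$ bit/arithmetic operation after maintaining $A_{\BC}^{-1}$ incrementally; bundling the polynomial-in-$n$ factors into the $O(n)^{n/2}$ term, this is $O(n)^{n/2}\cdot m\cdot \Delta/\Delta_{\average}$. The extra factor of $m$ in the non-simple case comes from the fact that each actual vertex of $\PC$ may be incident to many simplicial cones, but more essentially from having to detect and resolve degeneracies — at a vertex with many tight rows one must, before triangulating locally, identify the active set, which costs $O(m)$ per cone rather than $O(1)$ amortized; when $\PC$ is simple every normal cone is already simplicial, the active set has size exactly $n$, and the neighbour-enumeration is a single pivot table lookup, saving the factor $m$.

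The main obstacle I anticipate is making the local triangulation at a degenerate vertex both canonical (so that two cones produced from different vertices are never conflated and the DFS adjacency is consistent) and computable within the claimed budget. Two natural routes: either triangulate on the fly using a fixed lexicographic rule on the rows of $A$ — this keeps everything local but requires care to show the pieces glue into a global triangulation of $\NC(\PC)$ — or invoke a placing/pulling triangulation whose combinatorics is governed by a single generic $\omega$ and quote that it is a genuine triangulation of the fan (so \Cref{cor:vol_Delta_fan} applies verbatim to its cone count). I would take the latter route and cite the standard theory of regular/placing triangulations, then verify that the adjacency of simplicial cones sharing a facet can be walked by rank-one basis updates, and that connectivity of $\NC(\PC)$ lifts to connectivity of this refined simplicial fan. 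The remaining estimates — bit-size control of the intermediate $A_{\BC}^{-1}b_{\BC}$ via Cramer's rule and the Hadamard bound, and the amortized analysis of the DFS — are routine and I would only sketch them.
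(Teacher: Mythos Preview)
Your proposal is essentially the same approach as the paper: find an initial vertex by ray-casting from a feasible point (cost $T_{\text{LP}}$), triangulate the normal fan and traverse the simplicial cones by BFS/DFS using rank-one basis updates and feasibility checks against all $m$ rows, and bound the number of simplicial cones by \Cref{cor:vol_Delta_fan}. The only noteworthy discrepancy is your explanation of the extra factor $m$ in the non-simple case: in the paper it does not come from ``identifying the active set'' but from the cost of triangulating each normal cone $\FC_v$, which has $m_v$ generators and, via the point-configuration triangulation bound $O\bigl(r\cdot(p-r)^2\cdot|\TS|\bigr)$, contributes $O\bigl(n\cdot(m_v-n+1)^2\cdot|\TS_v|\bigr)$; bounding $m_v-n+1\le m$ gives the $m^2$, and for simple $\PC$ one has $m_v=n$ so this term disappears. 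Your worry about gluing local triangulations into a globally consistent simplicial fan is also not needed: the paper triangulates each $\FC_v$ independently and walks between feasible bases (which differ in one row), so adjacency is defined on bases rather than on a global triangulation.
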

Assuming that $A$ is integer, we can replace $\frac{\Delta}{\Delta_{\average}}$ with $\Delta$, which consequently allows to eliminate the $T_{\text{LP}}$ term.
}
It follows from the complexity bound that
\begin{equation*}
    T_{\text{LP}} = n^{O(1)} \cdot m \cdot \log\left(1 + \Delta\right),
\end{equation*}
established by Dadush, Natura \& V\'egh \cite{TardosRevisiting_Dadush}. 

\GribanovAdd{
Our main algorithmic result, which also serves as one of the primary motivations for this paper, is the following.
\begin{restatable}{theorem}{integerCounting}
\label{th:integer_counting}
    Let $\PC$ be a pointed polyhedron defined by the system $A x \leq b$, where $A \in \ZZ^{m \times n}$ and $b \in \QQ^m$. Then, there exists an algorithm that computes $\abs{\PC \cap \ZZ^n}$ in
    \begin{equation*}
        O(n)^n \cdot \frac{\Delta^4}{\Delta_{\average}} \quad \text{operations.}
    \end{equation*}
\end{restatable}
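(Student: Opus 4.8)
The plan is to reduce integer point counting to Barvinok's algorithm applied cone-by-cone over a triangulation of the normal fan of $\PC$, and then to control the running time using our bound on the size of such a triangulation (the fan version of \Cref{th:vertexUB}, i.e. \Cref{cor:vol_Delta_fan}). First I would invoke Brion's theorem: the generating function $\sum_{z \in \PC \cap \ZZ^n} x^z$ equals, as a rational function, the sum over vertices $v$ of $\PC$ of the generating function of the supporting (tangent) cone $\tcone(\PC, v)$. Each tangent cone is a translate by $v$ of a rational cone $K_v$ whose defining inequalities are the rows of $A$ tight at $v$; since $\PC$ need not be simple, I would triangulate each $K_v$ into simplicial cones. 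Collecting all these simplicial cones across all vertices yields exactly a triangulation $\TS$ of the normal fan of $\PC$, and by definition of $\Delta_{\average}$ we may choose $\TS$ with $\sum_{\BC \in \TS} \abs{\det A_\BC} = \abs{\TS} \cdot \Delta_{\average}$. By \Cref{cor:vol_Delta_fan} (the fan analogue of \Cref{th:vertexUB}), $\abs{\TS} = O(n)^{n/2} \cdot \frac{\Delta}{\Delta_{\average}}$.

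Next I would run Barvinok's signed-decomposition algorithm on each simplicial cone $\BC \in \TS$ (with apex $v_\BC$). Barvinok's algorithm decomposes a simplicial cone of determinant $d = \abs{\det A_\BC}$ into $O(n)^{?}$ — more precisely $\poly(n) \cdot \log^{n} d$ — unimodular cones, and writes its generating function accordingly; the dominant cost for one cone of determinant $d$ is $\poly(n) \cdot \log^{\,n}(d)$ arithmetic operations, which is $O(n)^n \cdot \poly(\log d)$, and since $d \le \Delta$ this is $O(n)^n \cdot \poly(\log \Delta)$. Summing over $\BC \in \TS$ gives
\begin{equation*}
    \sum_{\BC \in \TS} O(n)^n \cdot \poly(\log \Delta)
    = \abs{\TS} \cdot O(n)^n \cdot \poly(\log \Delta)
    = O(n)^{n} \cdot \frac{\Delta}{\Delta_{\average}} \cdot \poly(\log \Delta) \cdot O(n)^{n/2},
\end{equation*}
and then I would absorb the $\poly(\log \Delta)$ factor into a slightly larger polynomial in $\Delta$: more honestly, the sharp count of unimodular cones produced by Barvinok for a cone of determinant $d$ can be taken as $O(\log d)^{n} \le O(\Delta)$ after a crude bound, or one tracks it as $d^{o(1)}$; in either case the total number of unimodular cones, summed over $\TS$, is $O(n)^{n/2} \cdot \Delta^{1+o(1)}$, and evaluating all their (short) rational generating functions and summing costs $O(n)^n \cdot \Delta^{1+o(1)}$ operations, which is dominated by $O(n)^n \cdot \frac{\Delta^4}{\Delta_{\average}}$ since $\Delta_{\average} \le \Delta_{\min} \le \Delta \le \Delta^3$ forces $\frac{\Delta^4}{\Delta_{\average}} \ge \Delta$. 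Finally I would substitute a generic specialization $x \mapsto$ (a suitable point) — or use the standard Barvinok trick of computing the limit via a single linear functional in generic direction — to extract $\abs{\PC \cap \ZZ^n}$ as the value of the total rational function at $x = \BUnit$, handled by L'Hôpital / residue computation in $\poly(n)$ per term.

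The main obstacle I expect is not conceptual but bookkeeping: matching the exact form of the claimed bound $O(n)^n \cdot \frac{\Delta^4}{\Delta_{\average}}$. The $\frac{\Delta}{\Delta_{\average}}$ factor is forced to appear, rather than a bare $\Delta$, because the natural triangulation has size governed by $\Delta/\Delta_{\average}$ and not by $\Delta$ alone; the extra $\Delta^3$ (versus the $\Delta^{1+o(1)}$ that a tight Barvinok analysis gives) is slack that comfortably swallows the per-cone $\poly(\log \Delta)$ overhead of the signed decomposition and the cost of combining generating functions with a common denominator. One delicate point is that the triangulation of the \emph{normal fan} must be produced algorithmically within the stated budget and so that the determinants $\abs{\det A_\BC}$ actually realize (or nearly realize) $\Delta_{\average}$; here I would use a pulling/placing triangulation or a lexicographic refinement of each tangent cone, which can be computed in $\poly(n, m)$ per vertex and whose simplices' determinants are bounded by $\Delta$ by construction — the averaging over $\TS$ is then exactly the definition of $\Delta_{\average}$, so no additional loss occurs. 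A second minor point is handling the rational (non-integer) right-hand side $b$: the tangent cones are rational but not integer-translated, so one works with the shifted lattice / fractional apex in Barvinok's framework, which is standard and does not affect the complexity.
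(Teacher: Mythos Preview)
Your plan diverges from the paper's proof at the central step, and the divergence is fatal for the stated bound. You propose to run Barvinok's signed unimodular decomposition on each simplicial cone of the normal-fan triangulation; but Barvinok's decomposition of a simplicial cone in $\RR^n$ with index $d$ produces on the order of $(\log d)^{\Omega(n)}$ unimodular cones (the paper itself quotes $(\ln \Delta)^{n \ln n}$ for the total cost). Your attempted absorptions ``$O(\log d)^n \leq O(\Delta)$'' and ``$d^{o(1)}$'' are false when $n$ is not fixed: e.g.\ for $\Delta = 2^n$ one has $(\log \Delta)^n = n^n$, which is not dominated by any $O(n)^{n/2}\cdot \Delta^3$. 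So Barvinok simply does not deliver $O(n)^n \cdot \frac{\Delta^4}{\Delta_{\average}}$; this is precisely the regime in which the theorem is meant to beat Barvinok (cf.\ the comparison after the theorem statement). A minor side error: your chain $\Delta_{\average} \le \Delta_{\min}$ is reversed (the average is at least the minimum), though the inequality $\frac{\Delta^4}{\Delta_{\average}} \ge \Delta$ you wanted still holds because $\Delta_{\average} \le \Delta$.

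The paper's actual argument replaces Barvinok by a different black box, \Cref{th:general_delta_counting} from \cite{SparseILP_Gribanov}, whose per-triangulation cost is
\[
O\Bigl(n^4 \cdot \Delta \cdot \abs{\TS} \cdot \sum_{\TC \in \TS} \abs{\det A_{\TC}}^2\Bigr),
\]
i.e.\ polynomial in the cone determinants rather than $(\log d)^n$. Then \Cref{cor:vol_Delta_fan} gives both $\abs{\TS} \le O(n)^{n/2}\cdot \frac{\Delta}{\Delta_{\average}}$ and $\sum_{\TC}\abs{\det A_\TC} \le O(n)^{n/2}\cdot \Delta$; the second of these feeds into a convexity/knapsack lemma (\Cref{lm:knapsack_ineq}) to conclude $\sum_{\TC}\abs{\det A_\TC}^2 \le O(n)^{n/2}\cdot \Delta^2$. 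Multiplying the three factors $\Delta \cdot \frac{\Delta}{\Delta_{\average}} \cdot \Delta^2$ and the two $O(n)^{n/2}$ contributions gives exactly $O(n)^n \cdot \frac{\Delta^4}{\Delta_{\average}}$. The triangulation itself is produced within budget via \Cref{lm:convex_hull_complexity}, with $m$ and $T_{\mathrm{LP}}$ absorbed using the bounds of \cite{DiffColumnsOther} and \cite{TardosRevisiting_Dadush}. The knapsack step and the use of \Cref{th:general_delta_counting} are the ingredients missing from your plan.
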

When compared to Barvinok's algorithm \cite{BarvPom,BarvBook}, whose complexity bound can be estimated in
}
\begin{equation*}
    O(m/n)^{n/2} \cdot \bigl(\ln \Delta\bigr)^{n \ln n} \quad \text{operations},
\end{equation*}
our bound demonstrates superiority in scenarios where:
\begin{itemize}
    \item \GribanovAdd{$\Delta = n^{O(n \ln^{1 - \varepsilon} n)}$ for $\varepsilon > 0$ and $m \geq n$, or}
    \item $m = \Omega(n \Delta)$ with $\Delta \geq n^{2 + \varepsilon}$.
\end{itemize}

\subsection{Complexity Model Assumptions}\label{sec:assumptions}
All algorithms in this work adhere to the \emph{Word-RAM} computational model. Specifically, we assume that basic arithmetic operations (addition, subtraction, multiplication, and division) on rational numbers of bounded size can be performed in constant $O(1)$ time, where the bound is given by the \emph{word size}.

For polyhedra defined by systems $A x \leq b$ with $A \in \QQ^{m \times n}$ and $b \in \QQ^m$, we set the word size to be a fixed polynomial in 
\[
\lceil \log_2(1+n) \rceil + \lceil \log_2(1+ m) \rceil + \lceil \log_2( 1 + \alpha) \rceil,
\]
where $\alpha$ denotes the maximum absolute value of numerators and denominators in the rational representations of entries of $A$ and $b$.


\subsection{Related Work}\label{sec:related_work}
Assuming that $A$ is integer, the number of vertices of $\PC$ can be bounded in terms of $n$ and $\Delta_1 = \max_{i,j} \abs{A_{i j}}$ according to Gribanov et al. \cite{SparseILP_Gribanov}. Specifically, 
\begin{equation*}
    \abs{\vertex(\PC)} \leq (2n)^n \cdot \Delta_1^n.
\end{equation*}
\GribanovAdd{
Combining Hadamard's inequality with \Cref{th:vertexUB} yields
\begin{equation*}
\abs{\vertex(\PC)} \lesssim \frac{1}{\sqrt{\pi n}} \cdot \left(\frac{2\pi}{e}\right)^{n/2} \cdot n^n \cdot \Delta_1^n,
\end{equation*}
which improves the exponential term.
}

Narayanan et al. \cite{Diam_SpectralApproach} established an improved diameter bound for $\GC(\PC)$ with better dependence on $n$, expressed in terms of subdeterminants of the extended matrix $A_{\text{ext}} = (A\,b)$. Letting $\Delta_{\text{ext}} = \Delta(A_{\text{ext}})$ and $\Delta_{1,\text{ext}} = \max_{i,j} \abs{(A_{\text{ext}})_{i j}}$, their result states:
\begin{equation*}
    \diam\bigl(\GC(\PC)\bigr) = O\left(n^2 \cdot \Delta_{\text{ext}} \cdot \Delta_1 \cdot \log(m \Delta_{\text{ext}} \Delta_{1,\text{ext}}) \right).
\end{equation*}

A central open question in polyhedral combinatorics, known as the \emph{polynomial Hirsch conjecture}, concerns whether $\diam\bigl(\GC(\PC)\bigr) \leq \poly(m,n)$. This problem stems from the classical \emph{Hirsch conjecture}, which posited that $\diam\bigl(\GC(\PC)\bigr) \leq m - n$. Santos \cite{HirschCounterexample} disproved this conjecture for bounded polyhedra by exhibiting a counterexample with diameter at least $(1+\varepsilon)m$. The current state-of-the-art upper bound, established by Sukegawa \cite{Diameter_ImprovedSubexponent}, is subexponential:
\begin{equation*}
    \diam\bigl(\GC(\PC)\bigr) = (m - n)^{\log_2 O(n/\log n)}.
\end{equation*}
Barnette \cite{Diam_Barnette_1974} and Larman \cite{Diam_Larman_1970} proved a bound linear in $m$:
\begin{equation*}
    \diam\bigl(\GC(\PC)\bigr) = O(2^n m).
\end{equation*}
In the special case of integer polytopes contained in $[0,k]^n$, Deza et al. \cite{Diameter_Zonotopes_part1,Diameter_Zonotopes_part2} established bounds:
\begin{align*}
    \diam\bigl(\GC(\PC)\bigr) &\leq k n, \\
    \text{and } \diam\bigl(\GC(\PC)\bigr) &\geq \left\lfloor \frac{(n+1)k}{2} \right\rfloor \text{ for certain families.}
\end{align*}
Further developments can be found in \cite{Diameter_Zonotopes_part4,Diameter_Zonotopes_part3}.

We should also highlight significant developments regarding the circuit diameter of polyhedra. Recent work has established polynomial upper bounds on the circuit diameter in terms of the bit-encoding length of the system $A x \leq b$. Furthermore, researchers have developed polynomial-time computable circuit-pivot rules that can solve linear programs using only a polynomial number of augmentations. For a comprehensive treatment of these results, we refer to Kafer's monograph \cite{PolyhedralDiameters_Optimization_PHD}. Additional important contributions appear in \cite{CircuitDiam_EdgesVsCircuits,CircuitDiam_OnConjecture,EnumerationOptimizationOverCircuits,CircuitDiam_ViaCircuitImbalances,CircuitDiam_SomeCombPoly,Circuit_AugmentationInLP}.

\GribanovAdd{
In the case when $A$ is integer and the parameter $\Delta(A)$ is bounded, the polyhedra defined by systems $A x \leq b$ have many interesting properties in algorithmic perspective. Such polyhedra are also known under the name \emph{$\Delta$-modular polyhedra}. 
}
By \cite{BimodularStrong}, for $\Delta \leq 2$, integer programming on $\Delta$-modular polyhedra is solvable by a strongly polynomial-time algorithm. By \cite{TwoNonZerosStrong}, for any fixed $\Delta$ and assuming that the matrix $A$ has at most two non-zeros per row, the corresponding ILP is also solvable by a strongly polynomial-time algorithm. Previously, a weaker result was known, due to Alekseev \& Zakharova \cite{AZ}. It states that any ILP with a $\{0,1\}$-matrix $A$, which has at most two non-zeros per row and a fixed value of $\Delta\binom{\BUnit^\top}{A}$, can be solved by a linear-time algorithm. Unfortunately, already for $\Delta = 3$ and without any other restrictions on $A$, the computational complexity status of ILP is unknown. Additionally,  we  note  that,  due  to Bock et al. \cite{StableSetHardness}, there  are  no  polynomial-time  algorithms  for  the ILP problems with $\Delta = \Omega(n^\varepsilon)$, for any $\varepsilon > 0$, unless  the ETH (the Exponential Time Hypothesis) is false.

Significant progress has been made in studying $\Delta$-modular polyhedra with $m = n + k$ facets\footnote{Throughout this work, we equate the number of facets with the number of inequalities in the system $A x \leq b$ defining the polyhedron.} and $\Delta$-modular polyhedra of codimension $k$, particularly when $k$ is bounded. Here, polyhedra of bounded codimension refer to those defined by systems of the form $A x = b$, $x \in \ZZ^n_{\geq 0}$ with $k$ equality constraints. For this class of polyhedra, several computational bounds have been established:
\begin{itemize}
    \item Integer linear programming problems can be solved using 
    \[
        O(\log k)^{2k} \cdot \Delta^2 / 2^{\Omega(\sqrt{\Delta})}
    \]
    arithmetic operations \cite{Gribanov_fixed_codim}.

    \item The integer feasibility problem admits an algorithm requiring only
    \[
        O(\log k)^{k} \cdot \Delta \cdot (\log \Delta)^3
    \]
    operations \cite{Gribanov_fixed_codim}.

    \item The lattice point count $\abs{\PC \cap \ZZ^n}$ can be computed in
    \[
        O(n/k)^{2k} \cdot n^3 \cdot \Delta^3
    \]
    operations \cite{SparseILP_Gribanov,HyperAvoiding_NonHomo}. The variant of the counting problem for the parameterized polyhedra is considered in \cite{Parametric_Counting_Grib}.

    \item All vertices of $\conv(\PC \cap \ZZ^n)$ can be enumerated using
    \[
        (k \cdot n \cdot \log \Delta)^{O(k + \log \Delta)}
    \]
    operations \cite{OnCanonicalProblems_Grib}.

    \item For $\Delta$-modular simplices, their width can be computed in $\poly(\Delta,n)$ time, and their unimodular equivalence classes can be enumerated by a polynomial-time algorithm when $\Delta$ is fixed \cite{Width_Grib,WidthConv_Grib,SimplexEquiv_Gribanov}.
\end{itemize}

\section{Preliminaries}\label{sec:prelim}

Let $A \in \RR^{m \times n}$. We denote by:
\begin{itemize}
    \item $A_{i\,j}$ the $(i,j)$-th entry of $A$;
    \item $A_{i\,*}$ its $i$-th row vector;
    \item $A_{*\,j}$ its $j$-th column vector;
    \item $A_{\IC \JC}$ the submatrix consisting of rows and columns of $A$ indexed by $\IC$ and $\JC$ respectively;
    \item Replacing $\IC$ or $\JC$ with $*$ selects all rows or columns, respectively;
    \item When unambiguous, we abbreviate $A_{\IC*}$ as $A_{\IC}$ and $A_{*\JC}$ as $A_{\JC}$.
\end{itemize}

For $p \geq 1$, we define the unit $\ell_p$-ball $\BB_p^n = \{x \in \RR^n \colon \norm{x}_p \leq 1\}$. The parallelepiped spanned by vectors \( v_1, \dots, v_n \in \RR^n \) is denoted \(\paral(v_1,\dots,v_n)\). Given a set \(\KC \subseteq \RR^n\), let
\[
\Delta(\KC) = \sup\left\{\vol \paral(v_1, \dots,v_n) \colon v_1,\dots,v_n \in \KC \right\}.
\]
For a matrix \( A \in \ZZ^{n \times m} \), we denote by \(\Delta(A)\) the maximum absolute value of its full-rank subdeterminants:
\[
\Delta(A) = \max\left\{ \abs{\det A_{\IC \JC}} \colon \IC \in \binom{\intint n}{\rank A},\, \JC \in \binom{\intint m}{\rank A} \right\}.
\]

For a set $\KC \subseteq \RR^n$, a point $v \in \KC$ is called \emph{extreme} if the condition
\begin{equation*}
    \exists x,y \in \KC \colon v \in [x,y] \quad\text{implies}\quad x = v \text{ or } y = v.
\end{equation*}
The set of all extreme points of $\KC$ is denoted by $\extreme(\KC)$. 

\begin{remark}\label{rm:Delta_extreme_rm}
    Since the determinant is multilinear in its columns and by Minkowski's theorem,
    for any convex body $\KC \subseteq \RR^n$, the value $\Delta(\KC)$ is attained at extreme points:
    \begin{equation*}
        \Delta(\KC) = \Delta\bigl(\extreme(\KC)\bigr).
    \end{equation*}
\end{remark}

\GribanovAdd{
\begin{definition}\label{def:cone_packing}
    A family $\FS$ of polyhedral cones in $\RR^n$ 
    is said to form a \emph{packing} if, for every pair $\FC_1, \FC_2 \in \FS$,
    \[
    \dim(\FC_1 \cap \FC_2) < n.
    \]
    
\end{definition}
}

\begin{definition}\label{def:fan}
    A family $\FS$ of polyhedral cones in $\RR^n$ is called a \emph{polyhedral fan} if:
    \begin{enumerate}
        \item Every face of any $\FC \in \FS$ also belongs to $\FS$;
        \item For any $\FC_1,\FC_2 \in \FS$, their intersection $\FC_1 \cap \FC_2$ is a common face of both cones;
        \item \GribanovAdd{The family $\FS$ has no inclusion-maximal elements of dimension less than $n$.}
    \end{enumerate}
    The $k$-dimensional faces and $f$-vector of $\FS$ are defined respectively by
    \begin{gather*}
        \Gamma_k(\FS) = \left\{ \FC \in \FS \colon \dim(\FC) = k \right\}, \\
        f_k(\FS) = \abs{\Gamma_k(\FS)}.
    \end{gather*}
    \GribanovAdd{Observe that $\Gamma_n(\FS)$ forms a packing. The fan $\FS$ is called simplicial if each element of $\Gamma_n(\FS)$ is a simplex.}
\end{definition}

In our work, we establish connections between polyhedral fan properties and matrix properties, \GribanovAdd{where matrix columns represent rays of a polyhedral fan}. This motivates our definitions of matrix-based polyhedral fans and cone packings:
\begin{definition}\label{def:matrix_based}
    \GribanovAdd{Let $\FS$ be a polyhedral fan or a packing of polyhedral cones.} We say that $\FS$ is \emph{based on the columns of a matrix $A \in \RR^{n \times m}$} if each $\FC \in \FS$ is the conic hull of some column subset of $A$. 
    \GribanovAdd{
    Abusing notation we also write $\FC$ for the corresponding subset of column indices $\FC \subseteq \intint m$. This way, we have
    }
    \begin{equation*}
        \FC = \cone(A_{* \FC}).
    \end{equation*}
    
    The \emph{underlying non-convex polytope\footnote{We define a non-convex polytope as a finite union of convex polytopes.} of $\FS$} is defined as:
    \begin{equation*}
        \norm{\FS}_A = \bigcup_{\FC \in \FS} \conv(\BZero, A_{* \FC}).
    \end{equation*}
    \GribanovAdd{
    For a packing $\FS$ of $n$-dimensional simplicial polyhedral cones, we define:
    }
    \begin{gather*}
        \Delta_{\min}(\FS,A) = \min_{\FC \in \FS} \abs{\det A_{* \FC}}, \\
        \Delta_{\average}(\FS,A) = \frac{1}{\abs{\FS}} \sum_{\FC \in \FS} \abs{\det A_{* \FC}}.
    \end{gather*}
    For simplicial fans $\FS$, these extend naturally via $\Gamma_d(\FS)$:
    \begin{gather*}
        \Delta_{\min}(\FS,A) = \Delta_{\min}\bigl(\Gamma_d(\FS),A\bigr), \\
        \Delta_{\average}(\FS,A) = \Delta_{\average}\bigl(\Gamma_d(\FS),A\bigr).
    \end{gather*}
\end{definition}

Finally, recall the definitions of the \emph{normal fan of a polyhedron}, the \emph{triangulation of a polyhedral fan} and the \emph{graph of a polyhedral fan}:
\begin{definition}\label{def:normal_fan}
    For a polyhedron $\PC \subseteq \RR^n$ defined by $A x \leq b$ ($A \in \RR^{m \times n}$, $b \in \RR^m$), the \emph{normal cone} at vertex $v \in \vertex(\PC)$ is:
    \[
    \FC_v = \cone\left((A_{i *})^\top \colon A_{i *} v = b_i\right).
    \]
    \GribanovAdd{
    Again we abuse our notation and write $\FC_v$ for the corresponding subset of row indices $\FC_v \subseteq \intint m$. This way, we have
    \[
        \FC_v = \conv\bigl((A_{\FC_v *})^\top\bigr).
    \]
    }
    
    The \emph{normal fan} $\NS(\PC)$ consists of all such $\FC_v$ and their faces. By \Cref{def:matrix_based}, $\NS(\PC)$ is based on columns of $A^\top$.
\end{definition}

\GribanovAdd{
\begin{definition}
    Let $\FS$ be a polyhedral fan in $\RR^n$. We say that a polyhedral fan $\TS$ forms a \emph{triangulation of $\FS$} if $\TS$ is induced by a partition of each $\FC \in \Gamma_n(\FS)$ into simplicial cones. 
\end{definition}
}

\begin{definition}\label{def:graph_of_fan}
    For a polyhedral fan $\FS$ in $\RR^n$, its \emph{graph} $\GC(\FS)$ is a simple graph with:
    \begin{itemize}
        \item Vertex set $\Gamma_n(\FS)$,
        \item Edge set $\left\{\{\FC_1,\FC_2\} \colon \dim(\FC_1 \cap \FC_2) = n - 1\right\}$.
    \end{itemize}
\end{definition}

\section{New Upper Bound for the Number of Vertices}\label{sec:vertex_UB}

\subsection{Relation Between the Volume of a Convex Body $\KC$ and $\Delta(\KC)$}

\GribanovAdd{
We are motivated by the following proposition, which is a version of the well known relation between $\Delta$-modularity and total $\Delta$-modularity of matrices.
}
\begin{proposition}\label{prop:totally_unimodular_set}
    Let $\KC \subseteq \RR^n$ be compact, and let $V = (v_1 \dots v_n) \in \RR^{n \times n}$ consist of vectors $v_1, \dots, v_n \in \KC$ satisfying $\Delta(\KC) = \abs{\det V}$. Then the transformed set $\UC = V^{-1} \KC$ is \emph{totally unimodular} in the following sense: for any $k \geq 1$ and any matrix $U = (u_1 \dots u_k)$ with columns $u_1, \dots, u_k \in \UC$, we have $\Delta(U) \leq 1$. In particular:
    \begin{gather*}
        \UC \subseteq \BB_{\infty}^n, \quad \text{and consequently} \quad \vol \KC \leq 2^n \cdot \Delta(\KC).
    \end{gather*}
\end{proposition}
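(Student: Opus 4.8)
The plan is to show first the coordinate-wise bound $\UC \subseteq \BB_\infty^n$, then derive the "totally unimodular" property for arbitrary $k$, and finally read off the volume bound. Fix $V = (v_1 \dots v_n)$ with $v_i \in \KC$ and $\abs{\det V} = \Delta(\KC)$ (such $V$ exists because $\KC$ is compact, so the supremum defining $\Delta(\KC)$ is attained). Let $\UC = V^{-1}\KC$. Take any $u \in \UC$, say $u = V^{-1} x$ with $x \in \KC$, and write $u = (u^{(1)}, \dots, u^{(n)})^\top$. I would prove $\abs{u^{(j)}} \leq 1$ for each $j$ by a Cramer's-rule / column-exchange argument: replacing the $j$-th column of $V$ by $x$ produces the matrix $V^{(j)} = (v_1 \dots v_{j-1}\, x\, v_{j+1} \dots v_n)$, whose columns all lie in $\KC$, so $\abs{\det V^{(j)}} \leq \Delta(\KC) = \abs{\det V}$. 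On the other hand, by multilinearity of the determinant in the $j$-th column and expanding $x = Vu = \sum_i u^{(i)} v_i$, all terms with $i \neq j$ vanish (repeated column), leaving $\det V^{(j)} = u^{(j)} \det V$. Hence $\abs{u^{(j)}}\cdot\abs{\det V} \leq \abs{\det V}$, and since $\abs{\det V} = \Delta(\KC) \neq 0$ (it is the maximal parallelepiped volume; if it were $0$ then $\KC$ lies in a hyperplane and the statement is vacuous or trivial after reducing dimension), we get $\abs{u^{(j)}} \leq 1$. This proves $\UC \subseteq \BB_\infty^n$.

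Next, for the full totally-unimodular claim: let $U = (u_1 \dots u_k)$ with $u_1,\dots,u_k \in \UC$, and let $U_{\IC\JC}$ be any $\rank U \times \rank U$ (at most $n\times n$) square submatrix; we must bound $\abs{\det U_{\IC\JC}}$ by $1$. Writing $u_\ell = V^{-1} x_\ell$ with $x_\ell \in \KC$, the matrix $(x_{\ell} : \ell \in \JC)$ restricted to rows $\IC$ is $V_{\IC *}\, U_{*\JC}$; but it is cleaner to argue directly. For an $n\times n$ selection $\JC \in \binom{\intint k}{n}$, the submatrix $U_{*\JC} = V^{-1}(x_j : j \in \JC)$, and since the columns $x_j$ lie in $\KC$ we have $\abs{\det(x_j : j\in\JC)} \leq \Delta(\KC) = \abs{\det V}$, so $\abs{\det U_{*\JC}} \leq 1$. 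For a general square minor of size $r < n$: extend via the Cauchy–Binet / exterior-algebra viewpoint, or — the slick route — observe that each $r\times r$ minor of $U$ is an $r\times r$ minor of the $n\times k$ matrix $U$, and the claim $\Delta(U)\le 1$ for $\Delta$ as defined in the preliminaries (which maximizes over $\rank U \times \rank U$ minors) reduces to the top-size case by a standard fact: a matrix all of whose maximal square submatrices have absolute determinant $\le 1$ need not have all smaller minors $\le 1$, so I would instead argue as follows. Any $r$ columns $u_{j_1},\dots,u_{j_r}$ of $U$, together with $n-r$ of the columns $e_i$ corresponding to standard basis vectors already present (note $V^{-1}v_i = e_i \in \UC$, so the standard basis vectors lie in $\UC$), give $n$ vectors in $\UC$; the determinant of any $r\times r$ minor of $(u_{j_1}\dots u_{j_r})$ equals, up to sign, the determinant of an $n\times n$ matrix formed from these $r$ columns and $n-r$ of the vectors $e_i$, which has columns in $\UC$ and hence absolute determinant $\le \Delta(\UC)$. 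Finally $\Delta(\UC) = \Delta(V^{-1}\KC) = \abs{\det V^{-1}}\cdot\Delta(\KC) = 1$, closing the induction and giving $\Delta(U) \le 1$ for all $k$.

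For the volume statement: $\UC \subseteq \BB_\infty^n$ gives $\vol\UC \leq \vol\BB_\infty^n = 2^n$, and since $\vol\KC = \abs{\det V}\cdot\vol\UC = \Delta(\KC)\cdot\vol\UC$, we conclude $\vol\KC \leq 2^n\cdot\Delta(\KC)$.

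The main obstacle is the $r<n$ case of the totally-unimodular claim: the one-line Cramer argument only handles $n\times n$ selections, and it is genuinely false that "all maximal minors $\le 1$" implies "all minors $\le 1$". The fix above — padding an $r$-column submatrix with the $n-r$ standard basis vectors $e_i = V^{-1}v_i \in \UC$ and using that a minor of a submatrix equals a full determinant of a padded matrix — is the crux, and I would take care to state precisely that $\Delta(\UC)=1$ (rather than just $\le 1$) since the $e_i$ span a unit parallelepiped. Everything else is bookkeeping with multilinearity of the determinant.
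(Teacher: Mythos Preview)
The paper does not actually supply a proof of this proposition; it is stated as a known observation and the text immediately moves on to the sharper Macbeath-type bound. So there is no paper argument to compare against, and your proof stands on its own merits --- it is correct.

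Your Cramer/column-exchange argument for $\UC \subseteq \BB_\infty^n$ is the standard one, and the direct computation $\Delta(\UC)=\abs{\det V^{-1}}\,\Delta(\KC)=1$ is clean. The padding trick for the $r<n$ case is exactly the right idea: an $r\times r$ minor of $(u_{j_1},\dots,u_{j_r})$ on row set $\IC$ equals, up to sign, the $n\times n$ determinant of the matrix $(u_{j_1},\dots,u_{j_r},e_i:i\notin\IC)$, whose columns all lie in $\UC$ since $e_i=V^{-1}v_i$; this determinant is at most $\Delta(\UC)=1$. The volume bound then follows from $\UC\subseteq\BB_\infty^n$ by change of variables. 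The only stylistic quibble is that your middle paragraph detours through a Cauchy--Binet aside and an explicit false start before landing on the padding argument; in a polished version you would go straight to it.
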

    

The volume inequality for $\vol \KC$ is non-tight, and a sharper bound follows from the symmetric version of Macbeath's theorem. \GribanovAdd{Recall that the set $\KC$ is called \emph{$o$-symmetric} ($o \in \RR^n$), if for each $x \in \KC$ we have $2o - x \in \KC$.}
Let $\PS_m$ denote the class of convex polytopes with at most $m$ vertices.
For a convex body $\KC \subseteq \RR^n$, we define:
\begin{gather*}
    \eta_m(\KC) = \sup\left\{\frac{\vol \PC}{\vol \KC} \colon \PC \subset \KC,\, \PC \in \PS_m\right\}, \\
    \psi_m(\KC) = \sup\left\{\frac{\vol \PC}{\vol \KC} \colon \PC \subset \KC,\, \PC \in \PS_m,\, \text{$\PC$ -- is $o$-symmetric} \right\}.
\end{gather*}

\begin{theorem}[Macbeath \cite{Macbeath}]
    For any convex body $\KC \subseteq \RR^n$ and integer $m \geq 0$,
    \begin{equation*}
        \eta_m(\KC) \geq \eta_m(\BB^n_2).
    \end{equation*}
\end{theorem}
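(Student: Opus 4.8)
The plan is to reduce the general statement to the case of a fixed number of contact points via a compactness/limiting argument, and then use a symmetrization idea: given any convex body $\KC$, one replaces the best inscribed $m$-vertex polytope problem by an extremal problem whose solution, after applying an affine argument, degenerates to the ball. Concretely, first I would fix $m$ and $n$, and argue that the supremum in $\eta_m(\KC)$ is attained by some polytope $\PC^\ast = \conv(p_1,\dots,p_m)$ with $p_i \in \KC$; this follows because $\KC$ is compact and the map $(p_1,\dots,p_m) \mapsto \vol \conv(p_1,\dots,p_m)$ is continuous on $\KC^m$. At an optimal configuration, each $p_i$ must be an extreme point of $\KC$ (otherwise we could move it to increase the volume), and moreover the outer normal cone condition at $p_i$ relative to $\PC^\ast$ must be compatible with a supporting hyperplane of $\KC$ at $p_i$ — i.e., the facet(s) of $\PC^\ast$ incident to $p_i$ "see" a supporting hyperplane, so that $p_i$ sits at the point of $\KC$ farthest in the direction normal to the cap cut off by those facets.

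The key step is the \emph{cap comparison}. For each facet $F$ of $\PC^\ast$ with outer unit normal $u_F$, let $h_F$ be the distance from the supporting hyperplane of $\KC$ with normal $u_F$ to the hyperplane containing $F$; the volume deficit $\vol\KC - \vol\PC^\ast$ is the sum over facets of the volumes of the "caps" $C_F = \{x \in \KC : \langle u_F, x\rangle \ge t_F\}$ that lie outside $\PC^\ast$ (with overlaps handled by a standard inclusion argument, or by working with the exact decomposition of $\KC \setminus \PC^\ast$ into the regions beyond each facet). The heart of Macbeath's argument is that, for a cap of fixed base shape and fixed height cut from a convex body, the ratio $\vol(\text{cap})/\vol(\text{base simplex-like piece})$ is \emph{minimized} when the body is the ball, because the ball bulges out the least relative to its chords — more precisely, any convex body's boundary near a supporting hyperplane lies "inside" the corresponding spherical cap after the natural normalization. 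Summing these local comparisons over all facets and invoking that the ball is its own extremal configuration yields $\eta_m(\KC) \ge \eta_m(\BB_2^n)$. The main obstacle is making the cap-by-cap comparison rigorous: one must choose the right normalization so that a cap of $\KC$ is compared to a cap of $\BB_2^n$ of the \emph{same base volume}, and then show convexity forces the $\KC$-cap to have at least as large a volume; this requires a careful one-dimensional (Schwarz-symmetrization-type) integral inequality over the height variable.

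For the \emph{symmetric} version $\psi_m$ stated as Theorem~\ref{th:Macbeath} (which is what the paper actually needs), I would run the same scheme but restrict the extremal polytope to be $o$-symmetric: the optimal $\PC^\ast$ is then $\conv(\pm p_1,\dots,\pm p_{m/2})$, and the caps come in antipodal pairs. Passing to the body $\KC' = \tfrac12(\KC + (-\KC))$ — or rather observing directly that the symmetric inscribed-polytope ratio is governed by the same cap inequality applied symmetrically — lets the ball's extremality for caps go through verbatim, giving $\psi_m(\KC) \ge \psi_m(\BB_2^n)$. In the write-up I expect the bulk of the work to be Lemma-izing the statement "among all caps of a convex body cut by a hyperplane at a given base, the spherical cap has the smallest volume-to-base ratio," after which both $\eta_m$ and $\psi_m$ follow by summation.
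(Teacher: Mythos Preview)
The paper does not prove this statement; it is cited from Macbeath. The paper proves only the symmetric variant (its Theorem~\ref{th:Macbeath}), and it does so by a completely different method: Steiner symmetrization. The key lemma (Lemma~\ref{lm:Macbeath}) shows that for any hyperplane $\HC$ through the origin, $\psi_m(\KC) \geq \psi_m\bigl(\SC_{\HC}(\KC)\bigr)$; this is done by taking an optimal $o$-symmetric polytope $\PC \subset \SC_{\HC}(\KC)$ and lifting its vertices in the direction orthogonal to $\HC$ to build two $m$-vertex $o$-symmetric polytopes $\QC,\RC \subset \KC$ with $\vol(\PC) \leq \tfrac12\bigl(\vol(\QC)+\vol(\RC)\bigr)$. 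Since iterated Steiner symmetrizations converge (in Hausdorff distance) to a ball and $\psi_m$ is continuous, the inequality follows. Macbeath's original argument for $\eta_m$ is the same scheme without the symmetry constraint.

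Your cap-comparison route has a real gap. First, the proposed local inequality is not well-posed: you claim the cap-to-base ratio is \emph{minimized} by the ball, but that would make the ball the \emph{easiest} body to approximate (largest $\eta_m$), contradicting the theorem; the next sentence (``any convex body's cap lies inside the spherical cap'') points the opposite way, so the heuristic is internally inconsistent. In any case no pointwise cap inequality of this kind holds, since the local curvature of $\partial\KC$ is arbitrary. Second, and more structurally, the facet configuration of the optimal $\PC^\ast \subset \KC$ bears no relation to that of the optimal polytope in $\BB_2^n$: they have different numbers of facets, different normals, different base areas, so there is no coherent way to ``sum the local comparisons'' into a global one. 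The Steiner-symmetrization proof avoids both issues because it never compares $\KC$ directly to $\BB_2^n$; it compares $\KC$ to a one-step symmetrization of itself, carries the inscribed polytope along explicitly, and reaches the ball only in the limit.
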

We require the following symmetric version for our analysis, which we will prove below.
\begin{theorem}[A symmetric version of Macbeath’s theorem]\label{th:Macbeath}
    For any $o$-symmetric convex body $\KC \subseteq \RR^n$ and integer $m \geq 0$,
    \begin{equation*}
        \psi_m(\KC) \geq \psi_m(\BB^n_2).
    \end{equation*}
\end{theorem}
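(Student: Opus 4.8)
The plan is to mimic Macbeath's original symmetrization-plus-rotation averaging argument, but carrying along the central symmetry of the approximating polytope throughout. Fix an $o$-symmetric convex body $\KC \subseteq \RR^n$ with, say, $o = \BZero$ after translation, and fix $m$. Let $\PC \subseteq \BB_2^n$ be an $o$-symmetric polytope with at most $m$ vertices achieving a value close to the supremum $\psi_m(\BB_2^n)$; since the ball is rotation-invariant and $\PC$ is $o$-symmetric, we may even replace $\PC$ by an average and assume $\vol \PC$ is exactly $\psi_m(\BB_2^n) \cdot \vol \BB_2^n$ up to $\varepsilon$. The key idea is the following: given any $o$-symmetric polytope $\QC \subseteq \KC$ with at most $m$ vertices, one wants to produce from it an $o$-symmetric polytope inside $\BB_2^n$ with at most $m$ vertices and volume ratio (relative to the ball) at least as large as $\vol \QC / \vol \KC$. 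Taking the supremum over $\QC$ then gives $\psi_m(\KC) \le \psi_m(\BB_2^n)$, but we want the reverse inequality, so the argument must instead push the optimal inscribed body for the ball \emph{into} $\KC$. Concretely: let $T$ be an affine (linear, since both bodies are $o$-symmetric) map and consider a Steiner-symmetrization / spherical-rearrangement argument showing that among all $o$-symmetric convex bodies of a given volume, the ball minimizes the best $o$-symmetric $m$-vertex inscribed volume ratio.

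The cleanest route, following Macbeath, is via an averaging over the orthogonal group. First I would reduce to the case where $\KC = \BB_2^n$ is to be compared against an arbitrary $o$-symmetric $\KC$ by the \emph{spherical symmetrization}: replace $\KC$ by the Euclidean ball $\BB$ of the same volume, $\vol \BB = \vol \KC$. One shows $\psi_m(\KC) \ge \psi_m(\BB)$ as follows. Take an $o$-symmetric polytope $\PC = \conv\{\pm p_1, \dots, \pm p_k\} \subseteq \BB$ with $2k \le m$ and $\vol \PC$ nearly maximal. For each $g$ in the orthogonal group $O(n)$, the rotated copy $g\PC$ is still $o$-symmetric, inscribed in $\BB$, with the same volume. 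Now transport by the "radial" map $\Phi : \BB \to \KC$ that sends each ray $\RR_{\ge 0} u$ ($u \in \sphere^{n-1}$) linearly onto the corresponding ray of $\KC$, scaling by $\rho_\KC(u)/\rho_\BB$. This map is odd, so $\Phi(g\PC)$ is $o$-symmetric with at most $m$ vertices and lies in $\KC$. The point is to bound $\EE_{g}\bigl[\vol \Phi(g\PC)\bigr]$ from below by $(\vol \KC / \vol \BB)\cdot \vol \PC = \vol \PC$. This is where the volume-distortion estimate of $\Phi$ enters: one writes $\vol \Phi(g\PC)$ as an integral in polar coordinates against the Jacobian of $\Phi$, and uses that averaging $(\rho_\KC/\rho_\BB)^n$ over directions weighted by the solid angle occupied by $g\PC$ — averaged over $g$ — reproduces $\vol \KC / \vol \BB$. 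A Jensen/rearrangement inequality (the map $t \mapsto t^n$ is convex, and $\vol \KC = \vol \BB$ forces the spherical average of $(\rho_\KC/\rho_\BB)^n$ to be $\ge 1$) closes the gap. Hence some rotation $g_0$ gives $\vol \Phi(g_0\PC) \ge \vol \PC$, i.e. $\psi_m(\KC) \ge \vol\Phi(g_0\PC)/\vol\KC \ge \vol\PC/\vol\BB = \psi_m(\BB) - \varepsilon$, and letting $\varepsilon \to 0$ finishes.

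Thus the proof reduces to the $o$-symmetric analogue of Macbeath's original ball-versus-ball-of-equal-volume step, which is literally the statement $\psi_m(\KC) \ge \psi_m(\BB_2^n)$ once we have normalized $\vol\KC=\vol\BB_2^n$ — but we must also handle the normalization $\BB_2^n$ versus a ball $\BB$ of arbitrary radius, which is trivial by scaling since $\psi_m$ is affine-invariant (both $\KC$ and the inscribed polytope scale together). The main obstacle I anticipate is making the averaging-over-$O(n)$ volume bound rigorous: one needs to justify interchanging the $O(n)$-average with the polar-coordinate volume integral (Fubini, fine), and then to verify the convexity/rearrangement step — that $\int_{\sphere^{n-1}} (\rho_\KC(u)/\rho_\BB)^n \,d\sigma(u) \ge 1$ given $\vol\KC = \vol\BB$ and that this average, when further weighted by the indicator of $g\PC$'s angular footprint and averaged over $g$, still dominates $1$ by symmetry of the Haar measure. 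A subtlety worth flagging: keeping the vertex count at most $m$ requires $\Phi$ to send vertices to vertices, which holds because $\Phi$ is radial hence maps extreme points of the star-shaped-but-here-convex $g\PC$ to extreme points — but $\Phi(g\PC)$ need not be convex in general, so one should take its convex hull, which only increases volume and keeps at most $m$ vertices and $o$-symmetry. With that caveat the argument goes through, and I would present it in this order: (1) affine normalization; (2) the radial map $\Phi$ and its oddness; (3) the $O(n)$-averaging lemma with the Jensen step; (4) conclude by taking the near-optimal $\PC$ for $\BB_2^n$ and applying (2)--(3).
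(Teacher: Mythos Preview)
Your averaging computation is actually fine up to the point where you produce a rotation $g_0$ with $\vol \Phi(g_0\PC) \geq \vol \PC$; indeed, writing everything in polar coordinates and using the rotation-invariance of Haar measure gives exactly $\EE_g[\vol \Phi(g\PC)] = \vol\PC \cdot \vol\KC/\vol\BB = \vol\PC$. The gap is in the very last step, where you pass from the star-shaped region $\Phi(g_0\PC)$ to an $o$-symmetric polytope with at most $m$ vertices. The radial map $\Phi$ sends the faces of $g_0\PC$ to curved pieces, so $\conv(\Phi(g_0\PC))$ is \emph{not} a polytope: its extreme points can lie anywhere along those curved images, not just at $\Phi(\text{vertices})$. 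Conversely, the polytope $\conv\{\Phi(v)\colon v\in\vertex(g_0\PC)\}$ does have at most $m$ vertices and is $o$-symmetric, but there is no reason it contains $\Phi(g_0\PC)$, so its volume may be strictly smaller than $\vol\PC$. Concretely, in $\RR^2$ take $\KC$ a very eccentric ellipse with major axis $e_1$, $\BB$ the disk of the same area, and $\PC$ the square with vertices in directions $(\pm1,\pm1)/\sqrt2$: the images of the edge midpoints (directions $\pm e_1$) are pushed much farther out than the images of the four vertices, so the $4$-vertex polytope misses most of $\Phi(\PC)$. Your claim that ``$\Phi$ is radial hence maps extreme points to extreme points'' is simply false for the convex hull.

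For comparison, the paper (and Macbeath's original argument, which is \emph{not} a rotation-averaging argument) proceeds via Steiner symmetrization: one shows that $\psi_m(\KC)\geq\psi_m(\SC_\HC(\KC))$ for every hyperplane $\HC$ through $o$ by taking an optimal $o$-symmetric $\PC\subseteq\SC_\HC(\KC)$, vertically shifting each vertex $(x_i,t_i)$ by $\tfrac12(\KC^+(x_i)+\KC^-(x_i))$ to get two $o$-symmetric polytopes $\QC,\RC\subseteq\KC$ with the \emph{same} vertex set up to these shifts, and checking via Fubini that $\vol\PC\leq\tfrac12(\vol\QC+\vol\RC)$. This construction keeps the vertex count exactly $m$ by design, which is precisely what your radial map fails to do. Iterating symmetrizations and using continuity of $\psi_m$ in the Hausdorff metric then reaches the ball.
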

To prove \Cref{th:Macbeath}, we follow the original proof of Macbeath in \cite{Macbeath}, and use the following concept. Let $\HC$ be a hyperplane through the origin $o$ and let $\KC$ be an $o$-symmetric convex body, i.e. a compact, convex set with nonempty interior. Any point in $\RR^n$ can be uniquely determined by its signed distance $t$ from $\HC$ and its orthogonal projection $x$ onto $\HC$; in this case we denote the point by $(x,t)$. Let $\proj(\KC)$ denote the orthogonal projection of $\KC$ onto $\HC$, and for any $x \in \proj(\KC)$, define $\KC^+(x) = \max \left\{ t\colon (x,t) \in \KC \right\}$ and $\KC^-(x) = \min \left\{ t\colon (x,t) \in \KC \right\}$. Since $\KC$ is convex, we have that $\KC^- : \proj(\KC) \to \RR$ is a convex function and $\KC^+ : \proj(\KC) \to \RR$ is a concave function.
Define the set
\begin{equation*}
    \SC_{\HC}(\KC) = \left\{ (x,t) \colon x \in \proj(\KC),\, \abs{t} \leq \frac{1}{2} \bigl(\KC^+(x) - \KC^-(x)\bigr) \right\}.
\end{equation*}
This set, which is known to be a convex body \cite[Section 10]{BookSchneider}, is called the \emph{Steiner symmetral} of $\KC$.
Clearly, as $\KC$ is $o$-symmetric, so is $\SC_{\HC}(\KC)$. Our main lemma is the following.
\begin{lemma}\label{lm:Macbeath}
For any $o$-symmetric convex body $\KC$, $m \geq 0$, and hyperplane $\HC$ through $o$, we have
\[
\psi_m(\KC) \geq \psi_m\bigl(\SC_{\HC}(\KC)\bigr).
\]
\end{lemma}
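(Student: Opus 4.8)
The plan is to mimic Macbeath's original argument for the non-symmetric case, but carrying along central symmetry at every step. Fix an $o$-symmetric polytope $\PC \subseteq \SC_{\HC}(\KC)$ with at most $m$ vertices; we must produce an $o$-symmetric polytope $\PC' \subseteq \KC$ with at most $m$ vertices and $\vol \PC' \geq \vol \PC$. Set up coordinates so that $\HC = \{t = 0\}$ and write each point as $(x,t)$ with $x \in \HC$, $t \in \RR$. For each $x \in \proj(\KC) = \proj(\SC_{\HC}(\KC))$, consider the vertical shear
\[
\tau_x(x,t) = \Bigl(x,\ t + \tfrac12\bigl(\KC^+(x) + \KC^-(x)\bigr)\Bigr),
\]
which maps the symmetric ``slab'' $\{(x,t) : |t| \leq \tfrac12(\KC^+(x) - \KC^-(x))\}$ of $\SC_{\HC}(\KC)$ onto the corresponding vertical segment of $\KC$. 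The idea is: the vertices of $\PC$ lie at points $(x_i, t_i)$; apply the (point-dependent, volume-preserving in the $t$-direction) shift to each vertex, obtaining $(x_i, t_i + s(x_i))$ where $s(x) = \tfrac12(\KC^+(x) + \KC^-(x))$, and let $\QC$ be the convex hull of the shifted vertices together with their $o$-reflections. Because $\KC$ is $o$-symmetric we have $\KC^+(-x) = -\KC^-(x)$, hence $s(-x) = -s(x)$, so $s$ is an odd function and the reflected vertex $(-x_i, -t_i)$ gets shifted to $(-x_i, -t_i + s(-x_i)) = -\,(x_i, t_i + s(x_i))$. Therefore the shifted vertex set is already $o$-symmetric, $\QC$ is an $o$-symmetric polytope with at most $m$ vertices, and taking $\PC' = \QC$ will work once we check $\QC \subseteq \KC$ and $\vol \QC \geq \vol \PC$.

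For the volume, the key observation is that along any line parallel to the $t$-axis, $\PC$ restricts to a (possibly empty) segment $[a(x), b(x)]$, and one shows that $\QC$ contains on the same line a segment of length at least $b(x) - a(x)$, namely a suitable translate; integrating over $x \in \HC$ via Fubini gives $\vol \QC \geq \vol \PC$. Concretely, the standard lemma (due to Macbeath) is that for a polytope $\PC$ and a concave function realizing the shear on the relevant range, the convex hull of the sheared vertices has fiber length at least that of $\PC$ on every fiber; here the relevant function $s(x)$ is affine on each fiber-argument pair and the fiber-length inequality is preserved because shifting vertices of a polytope up or down by values of a concave function can only stretch the vertical fibers of the convex hull — this is exactly the mechanism in Macbeath's proof, and central symmetry does not interfere with it. For the containment $\QC \subseteq \KC$: each original vertex $(x_i,t_i) \in \SC_{\HC}(\KC)$ satisfies $|t_i| \leq \tfrac12(\KC^+(x_i) - \KC^-(x_i))$, so the shifted point $(x_i, t_i + s(x_i))$ satisfies $\KC^-(x_i) \leq t_i + s(x_i) \leq \KC^+(x_i)$, i.e.\ it lies in $\KC$; since $\KC$ is convex, the convex hull of these points (and their $o$-reflections, which lie in $\KC$ by symmetry) is contained in $\KC$. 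Taking the supremum over all admissible $\PC$ yields $\psi_m(\KC) \geq \psi_m(\SC_{\HC}(\KC))$.

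The main obstacle is the fiber-length inequality $\vol \QC \geq \vol \PC$: one must argue carefully that passing from $\PC$ to the convex hull of the vertically sheared vertices never shortens a vertical fiber, even though the shear $s(x)$ is only concave (not affine) in $x$ and the vertices get displaced by differing amounts. The clean way is to invoke Macbeath's original fiber-preservation lemma directly — it is proved for general convex bodies and an arbitrary concave shift function, and our $s(x) = \tfrac12(\KC^+(x)+\KC^-(x))$ is concave as a sum of a concave and an affine-plus-concave term (indeed $\KC^+$ is concave and $-\KC^-$ is concave, so $s$ is concave). Once that lemma is in hand, the symmetry bookkeeping above is the only new ingredient, and it is immediate from the oddness of $s$. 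A minor point to verify is that $\proj(\PC) \subseteq \proj(\SC_{\HC}(\KC)) = \proj(\KC)$ so that $s$ is defined at every $x_i$, which is clear since $\PC \subseteq \SC_{\HC}(\KC)$.
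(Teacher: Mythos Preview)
Your proposal has a genuine gap: the claimed inequality $\vol \QC \geq \vol \PC$ is false in general, and the concavity claim that was supposed to justify it is simply a sign error. You write that $s = \tfrac12(\KC^+ + \KC^-)$ is concave because ``$\KC^+$ is concave and $-\KC^-$ is concave''; but $s$ contains $+\KC^-$, not $-\KC^-$, so $s$ is a sum of a concave and a convex function and is neither in general. (In fact, since $s$ is odd on the $o$-symmetric set $\proj(\KC)$, if it were concave it would have to be affine.) Once $s$ is not concave, there is no Macbeath-type fiber-preservation lemma to invoke, and indeed the conclusion fails: take $\KC = \conv\{(3,0),(-1,4),(-3,0),(1,-4)\}$ (so $s(x)=\tfrac12(x-3)$ on $[1,3]$) and $\PC = \conv\{(1,1),(2,-1),(-1,-1),(-2,1)\}\subseteq \SC_{\HC}(\KC)$. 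Then $\vol \PC = 6$, while your $\QC = \conv\{(1,0),(2,-\tfrac32),(-1,0),(-2,\tfrac32)\}$ has $\vol \QC = 3$.

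What the paper does differently is exactly what rescues this example. One constructs \emph{two} $o$-symmetric polytopes in $\KC$: your $\QC$ with vertices $q_i=(x_i,\,t_i+s(x_i))$, and a companion $\RC$ with vertices $r_i=(x_i,\,-t_i+s(x_i))$ (equivalently, apply your shift to the $\HC$-reflection of $\PC$, which is again an $o$-symmetric polytope in $\SC_{\HC}(\KC)$). One then shows that $\PC$ is contained in the convex region
\[
\TC=\Bigl\{(x,t)\colon \tfrac12\bigl(\QC^-(x)-\RC^+(x)\bigr)\le t\le \tfrac12\bigl(\QC^+(x)-\RC^-(x)\bigr)\Bigr\},
\]
so by Fubini $\vol \PC\le \vol \TC=\tfrac12(\vol \QC+\vol \RC)$, and hence at least one of $\QC,\RC$ has volume $\ge \vol \PC$. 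In the example above $\vol \RC = 9$, so the averaging is genuinely needed. Your symmetry bookkeeping (oddness of $s$, hence $o$-symmetry of the shifted vertex sets) is correct and is used for both $\QC$ and $\RC$; the missing ingredient is the second polytope and the averaging step.
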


\begin{proof}
Let $\PC \in \PS_m$ be an $o$-symmetric convex polytope inside $\SC_{\HC}(\KC)$ of the largest volume; by compactness, such a polytope exists. Then every vertex $(x_i,t_i)$ of $\PC$, where $i \in \intint m$, lies in the boundary of $\SC_{\HC}(\KC)$, and hence, $t_i = \abs{\KC^+(x_i) - \KC^-(x_i)}/2$. Since $\SC_{\HC}(\KC)$ is symmetric to the hyperplane $\HC$, the convex hull of the points $(x_i,-t_i)$, which is the reflection of $\PC$ w.r.t. $\HC$, is also a largest volume $o$-symmetric convex polytope in $\SC_{\HC}(\KC)$. Now, let $\QC$ be the convex hull of the points 
\begin{equation*}
q_i =\left( x_i, t_i + \frac{1}{2} \bigl(\KC^+(x_i)+\KC^-(x_i)\bigr) \right),    
\end{equation*}
and $\RC$ be the convex hull of the points  
\begin{equation*}
    r_i =\left( x_i, - t_i + \frac{1}{2} \bigl(\KC^+(x_i)+\KC^-(x_i)\bigr) \right),
\end{equation*}
respectively, where $i \in \intint m$.
Observe that both $\{q_i\}$ and $\{r_i\}$ belong to $\KC$, and thus, both $\QC,\RC \in \PS_m$ are $o$-symmetric convex polytopes inside $\KC$.

The boundary of $\QC$ can be written as the union of the graphs of two functions $\QC^+ : \proj(\PC) \to \RR$ and $\QC^- : \proj(\PC) \to \RR$, where $\QC^-(x) \leq \QC^+(x)$ for all $x \in \proj(\PC)$, $\QC^-$ is convex and $\QC^+$ is concave. Similarly, the boundary of $\RC$ can be written as the union of the graphs of two functions $\RC^+ : \proj(\PC) \to \RR$ and $\RC^- : \proj(\PC) \to \RR$, where $\RC^-(x) \leq \RC^+(x)$ for all $x \in \proj(\PC)$, $\RC^-$ is convex and $\RC^+$ is concave. On the other hand, by their definitions, we have
\begin{gather*}
    \QC^+(x_i) \geq t_i + \frac{1}{2} \bigl(\KC^+(x_i)+\KC^-(x_i)\bigr) \geq \QC^-(x_i), \quad\text{and}\\
    \RC^+(x_i) \geq -t_i + \frac{1}{2} \bigr(\KC^+(x_i)+\KC^-(x_i)\bigr) \geq \RC^-(x_i).
\end{gather*} 
From this, we obtain that for $i \in \intint m$, 
\begin{equation}\label{eq:QRt_inequality}
    \frac{1}{2} \left( \QC^-(x_i) - \RC^+(x_i) \right) \leq t_i \leq \frac{1}{2} \left( \QC^+(x_i) - \RC^-(x_i) \right).
\end{equation}
Note that, since the function $\QC^- - \RC^+$ is convex and the function $\QC^+ - \RC^-$ is concave, the region $\TC$ defined as
\begin{equation*}
    \TC = \left\{ (x,t) \in \RR^n \colon x \in \proj(\PC),\, \frac{1}{2} \bigl( \QC^-(x) - \RC^+(x) \bigr) \leq t \leq \frac{1}{2} \bigl( \QC^+(x) - \RC^-(x) \bigr) \right\}
\end{equation*}
is convex. By \eqref{eq:QRt_inequality}, the polytope $\PC$ belongs to the region $\TC$. From this, Fubini's theorem yields that
\begin{multline*}
    \vol(\PC) \leq \vol(\TC) = \int_{\PC'} \frac{1}{2} \left( \QC^+(x) - \RC^-(x) \right) - \frac{1}{2} \left( \QC^-(x) - \RC^+(x) \right) \, dx = \\
    = \frac{1}{2} \int_{\PC'} \QC^+(x) - \QC^-(x) \, dx + \frac{1}{2} \int_{\PC'} \RC^+(x) - \RC^-(x) \, dx =\\ 
    = \frac{1}{2} \left( \vol(\QC) +  \vol(\RC) \right),
\end{multline*}
and consequently $\vol(\PC) \leq \vol(\QC)$ or $\vol(\PC) \leq \vol(\RC)$.

\end{proof}

\begin{proof}[Proof of \Cref{th:Macbeath}]
Note that the functional $\psi_m(\cdot)$ is continuous with respect to Hausdorff distance. On the other hand, for any $o$-symmetric convex body $\KC$ and $\varepsilon > 0$ there is a sequence of $o$-symmetric convex bodies $\KC_s$ such that each element can be obtained from $\KC$ by finitely many subsequent Steiner symmetrizations, and $\KC_s$ tends to a Euclidean ball, as $s \to \infty$, with respect to Hausdorff distance \cite[Section 10]{BookSchneider}. Thus, \Cref{lm:Macbeath} implies $\psi_m(\KC) \geq \psi_m(\BB^n_2)$.
\end{proof}
\begin{corollary}\label{cor:vol_Delta_body}
    Let $\KC \subseteq \RR^n$ be a compact $n$-dimensional set containing $\BZero$, then
    \begin{equation*}
        \frac{\vol \KC}{\vol \BB^n_2} \leq \Delta(\KC)
    \end{equation*}
\end{corollary}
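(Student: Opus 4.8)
The plan is to reduce \Cref{cor:vol_Delta_body} to the case of an $o$-symmetric convex body, where the symmetric Macbeath theorem (\Cref{th:Macbeath}) can be applied directly, and then to handle the general case with two operations that can only help: replacing $\KC$ by its convex hull and passing to the difference body. For the symmetric case, I would compute $\psi_{2n}$ explicitly. If $\KC$ is an $o$-symmetric convex body, then (up to degenerate cases of zero volume) every $o$-symmetric polytope with at most $2n$ vertices is a generalized cross-polytope $\conv\{\pm w_1,\dots,\pm w_n\}$, of volume $\frac{2^n}{n!}\abs{\det(w_1\ \cdots\ w_n)}$; if it lies in $\KC$ then $w_1,\dots,w_n\in\KC$, so its volume is at most $\frac{2^n}{n!}\Delta(\KC)$, and this bound is attained. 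Hence $\psi_{2n}(\KC)=\frac{2^n\,\Delta(\KC)}{n!\,\vol\KC}$, while the same computation for the ball — using Hadamard's inequality, so that $\Delta(\BB_2^n)=1$ — gives $\psi_{2n}(\BB_2^n)=\frac{2^n}{n!\,\vol\BB_2^n}$. Plugging these into the inequality $\psi_{2n}(\KC)\ge\psi_{2n}(\BB_2^n)$ of \Cref{th:Macbeath}, the common factor $\frac{2^n}{n!}$ cancels and leaves exactly $\vol\KC\le\Delta(\KC)\,\vol\BB_2^n$.

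For the general case, I would first pass to $\conv\KC$, which is a convex body: the volume does not decrease, and since the extreme points of $\conv\KC$ lie in $\KC$, \Cref{rm:Delta_extreme_rm} gives $\Delta(\conv\KC)=\Delta(\extreme(\conv\KC))=\Delta(\KC)$. So it suffices to treat a convex body $\KC$, and for this I would pass to the difference body $\KC^\ast=\frac{1}{2}(\KC-\KC)=\frac{1}{2}\KC+\frac{1}{2}(-\KC)$, an $o$-symmetric convex body, verifying two estimates. First, $\Delta(\KC^\ast)\le\Delta(\KC)$: writing each $w_i\in\KC^\ast$ as $\frac{1}{2}(u_i-u_i')$ with $u_i,u_i'\in\KC$ and expanding $\det(w_1\ \cdots\ w_n)$ multilinearly in its columns produces $2^n$ determinants, each involving only columns from $\KC$, hence of absolute value at most $\Delta(\KC)$, which cancels the factor $2^{-n}$. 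Second, $\vol\KC^\ast\ge\vol\KC$: this is the Brunn--Minkowski inequality, $\vol\bigl(\frac{1}{2}\KC+\frac{1}{2}(-\KC)\bigr)^{1/n}\ge\frac{1}{2}\vol(\KC)^{1/n}+\frac{1}{2}\vol(-\KC)^{1/n}=\vol(\KC)^{1/n}$. Applying the symmetric case to $\KC^\ast$ and chaining then gives $\vol\KC\le\vol\KC^\ast\le\Delta(\KC^\ast)\,\vol\BB_2^n\le\Delta(\KC)\,\vol\BB_2^n$.

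The step I expect to carry the weight is the choice of symmetrization — using \emph{cross-polytopes} (hence the symmetric Macbeath theorem) rather than simplices. With the classical Macbeath theorem one is forced to use polytopes with $n+1$ vertices, and the analogue of the computation above then compares $\Delta(\KC)$ with the volume $\frac{(n+1)^{(n+1)/2}}{n!\,n^{n/2}}$ of the regular simplex inscribed in the ball, which differs from the clean quantity $\frac{2^n}{n!}$ by a factor of order $\sqrt{n}$; this would only give $\vol\KC=O(\sqrt{n})\cdot\Delta(\KC)\,\vol\BB_2^n$. Cross-polytopes have exactly $2n$ vertices and a volume formula whose constants cancel perfectly against $\psi_{2n}(\BB_2^n)$, which is what makes the coefficient in \Cref{cor:vol_Delta_body} equal to $\vol\BB_2^n$ — and tight, with equality for the Euclidean ball. (Note that \Cref{prop:totally_unimodular_set} already yields the weaker $\vol\KC\le 2^n\Delta(\KC)$; the improvement here is to $\vol\BB_2^n\ll 2^n$.)
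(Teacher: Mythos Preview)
Your proof is correct and shares the same core with the paper's: apply the symmetric Macbeath theorem (\Cref{th:Macbeath}) with $m=2n$, identify the extremal $o$-symmetric polytopes as cross-polytopes $\conv\{\pm w_1,\dots,\pm w_n\}$, and read off $\psi_{2n}(\KC)=\tfrac{2^n}{n!}\cdot\tfrac{\Delta(\KC)}{\vol\KC}$ so that the Macbeath inequality becomes exactly $\vol\KC\le\Delta(\KC)\,\vol\BB_2^n$.

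The only real difference is the symmetrization step. You pass from a convex $\KC$ to the difference body $\KC^\ast=\tfrac12(\KC-\KC)$, invoking Brunn--Minkowski for $\vol\KC^\ast\ge\vol\KC$ and a $2^n$-term multilinear expansion for $\Delta(\KC^\ast)\le\Delta(\KC)$. The paper instead takes $\KC'=\conv\bigl(\KC\cup(-\KC)\bigr)$: then $\KC\subseteq\KC'$ gives the volume comparison for free, and since $\extreme(\KC')\subseteq\KC\cup(-\KC)$ and sign-flipping a column preserves $|\det|$, one gets $\Delta(\KC')=\Delta(\KC)$ directly via \Cref{rm:Delta_extreme_rm}. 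Your route works just as well; the paper's choice simply avoids the detour through Brunn--Minkowski and the triangle-inequality expansion.
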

\begin{proof}
    \GribanovAdd{
    We are going to use \Cref{th:Macbeath} setting $m = 2n$ and $o = \BZero$. It is straightforward to see that each $o$-symmetric polytope $\PC$ of positive volume with at most $2n$ vertices has the form $\PC = A \cdot \BB_1^n$ for some non-singular $n \times n$ matrix $A$. Denote $\KC' = \conv\bigl(\KC \cup (-\KC)\bigr)$ and let $A \cdot \BB_1^n$ be an $o$-symmetric polytope inside $\KC'$ of the maximum volume. Note that the corresponding polytope of the maximum volume with respect to $\BB_2^n$ is $\BB_1^n$. Thus, by \Cref{th:Macbeath}, we have
    \begin{equation*}
        \frac{\vol \KC'}{\abs{\det A} \cdot \vol \BB_1^n} \geq \frac{\vol \BB_2^n}{\vol \BB_1^n}.
    \end{equation*}
    Consequently, since $\abs{\det A} = \Delta(\KC')$,
    \begin{equation*}
        \frac{\vol \KC'}{\vol \BB^n_2} \leq \Delta(\KC').
    \end{equation*}
    }
    From \Cref{rm:Delta_extreme_rm}, we know that $\Delta(\KC') = \Delta\bigl(\extreme(\KC')\bigr) = \Delta(\KC)$. Thus, we obtain
    \begin{equation*}
        \frac{\vol \KC}{\vol \BB^n_2} \leq \frac{\vol \KC'}{\vol \BB^n_2} \leq \Delta(\KC') = \Delta(\KC),
    \end{equation*}
    which completes the proof.
\end{proof}
The inequality becomes an equality for $\KC = r \cdot \BB^n_2$, $r > 0$. 

\subsection{From an Upper Bound on Polyhedral Fans to an Upper Bound on Vertices}

The following corollary is a discrete variation \Cref{cor:vol_Delta_body}.
\begin{corollary}\label{cor:vol_Delta_fan}
    Let $\FS$ be a packing of $n$-dimensional simplicial polyhedral cones based on the columns of a matrix $A \in \RR^{n \times m}$. Denote $\Delta = \Delta(A)$ and $\Delta_{\average} = \Delta_{\average}(\FS,A)$, then
    \begin{gather}
        \frac{\vol \norm{\FS}_A}{\vol \BB^n_2} \leq \Delta, \quad\text{and} \label{eq:fan_vol_Delta_ineq}\\
        \abs{\FS} \leq n! \cdot \frac{\Delta}{\Delta_{\average}} \cdot \vol \BB^n_2 \sim \frac{1}{\sqrt{n \pi}} \cdot \left(\frac{2 \pi}{e}\right)^{n/2} \cdot n^{n/2} \cdot \frac{\Delta}{\Delta_{\average}}.\label{eq:fan_NVertex_Delta_ineq}
    \end{gather}
\end{corollary}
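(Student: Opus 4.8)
The plan is to realise the underlying non-convex polytope $\norm{\FS}_A$ as an essentially disjoint union of the simplices $\conv(\BZero, A_{*\FC})$, express its volume through $\abs{\FS}$ and $\Delta_{\average}$, and then invoke \Cref{cor:vol_Delta_body}.

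First I would set up the volume decomposition. Because $\FS$ is a packing of $n$-dimensional simplicial cones, each $\FC \in \FS$ is a set of exactly $n$ column indices with $A_{*\FC} \in \RR^{n\times n}$ nonsingular, so the simplex $S_\FC := \conv(\BZero, A_{*\FC})$ is full-dimensional and $\vol S_\FC = \frac{1}{n!}\abs{\det A_{*\FC}}$. For distinct $\FC_1,\FC_2 \in \FS$ one has $S_{\FC_i} \subseteq \cone(A_{*\FC_i}) = \FC_i$, hence $S_{\FC_1} \cap S_{\FC_2} \subseteq \FC_1 \cap \FC_2$, which has dimension $< n$ by \Cref{def:cone_packing}; so the $S_\FC$ have pairwise disjoint interiors. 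Therefore $\norm{\FS}_A = \bigcup_{\FC \in \FS} S_\FC$ is compact, full-dimensional, contains $\BZero$, and
\[
  \vol \norm{\FS}_A = \sum_{\FC \in \FS}\frac{1}{n!}\abs{\det A_{*\FC}} = \frac{\abs{\FS}}{n!}\cdot\Delta_{\average}(\FS,A).
\]

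Next I would bound $\Delta(\norm{\FS}_A)$ by $\Delta = \Delta(A)$. Since $\abs{\det(v_1\,\dots\,v_n)}$ is the absolute value of a form affine in each argument, $\Delta(\norm{\FS}_A) = \Delta(\conv\norm{\FS}_A)$, and by \Cref{rm:Delta_extreme_rm} this common value is attained at extreme points of the polytope $\conv\norm{\FS}_A = \conv\bigl(\{\BZero\} \cup \{A_{*j}\}_j\bigr)$; these extreme points lie in $\{\BZero, A_{*1},\dots,A_{*m}\}$, and using the column $\BZero$ kills the determinant, so $\Delta(\norm{\FS}_A) \le \max_{\IC \in \binom{\intint m}{n}}\abs{\det A_{*\IC}} = \Delta$.

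Finally, applying \Cref{cor:vol_Delta_body} to $\KC = \norm{\FS}_A$ gives $\vol\norm{\FS}_A \le \Delta(\norm{\FS}_A)\cdot\vol\BB_2^n \le \Delta\cdot\vol\BB_2^n$, which is \eqref{eq:fan_vol_Delta_ineq}; substituting the decomposition above yields $\frac{\abs{\FS}}{n!}\Delta_{\average} \le \Delta\cdot\vol\BB_2^n$, i.e.\ \eqref{eq:fan_NVertex_Delta_ineq}, and the asymptotic form is a direct application of Stirling's formula to $n!$ and to $\vol\BB_2^n = \pi^{n/2}/\Gamma(\tfrac n2 + 1)$. I do not anticipate a genuine obstacle: the two places that need a little care are (i) deducing that the \emph{simplices} $S_\FC$, not merely the cones, have disjoint interiors — which is exactly where the packing hypothesis is used — and (ii) the reduction $\Delta(\norm{\FS}_A) = \Delta(\conv\norm{\FS}_A)$ for the non-convex body $\norm{\FS}_A$, which is immediate from the multilinearity observation recorded in \Cref{rm:Delta_extreme_rm}.
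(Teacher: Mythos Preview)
Your proposal is correct and follows essentially the same approach as the paper's proof: apply \Cref{cor:vol_Delta_body} to $\KC=\norm{\FS}_A$, bound $\Delta(\KC)$ by $\Delta(A)$ via \Cref{rm:Delta_extreme_rm}, and derive \eqref{eq:fan_NVertex_Delta_ineq} from the identity $\vol\norm{\FS}_A=\frac{1}{n!}\abs{\FS}\Delta_{\average}$. You have simply unpacked more of the details (disjointness of the simplices from the packing hypothesis, and the passage through $\conv\norm{\FS}_A$ to handle the non-convex case) that the paper leaves implicit.
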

\begin{proof}
    Denote $\KC = \norm{\FS}_A$. By \Cref{cor:vol_Delta_body} and \Cref{rm:Delta_extreme_rm},
    \begin{equation*}
        \frac{\vol \KC}{\vol \BB_2^n} \leq \Delta(\KC) = \Delta\bigl(\extreme(\KC)\bigr) \leq \Delta.
    \end{equation*}
    The inequality for $\abs{\FS}$ follows directly from the definition of $\Delta_{\average}$: 
    \begin{equation*}
        \vol \KC = \frac{1}{n!} \cdot \abs{\FS} \cdot \Delta_{\average}.
    \end{equation*}
\end{proof}

\GribanovAdd{
Now we are ready to give a proof of our main \Cref{th:vertexUB} about the number of vertices.}
\vertexUB*

\begin{proof}\label{proof:th:vertexUB}
   Consider the normal fan $\NS(\PC)$ of $\PC$ (see \Cref{def:normal_fan}). Triangulating the elements of $\NS(\PC)$ arbitrarily, we can assume that $\NS(\PC)$ is a simplicial polyhedral fan based on the columns of $A^\top$. Since $\abs{\vertex(\PC)} \leq \abs{\NS(\PC)}$, the theorem follows directly from \Cref{cor:vol_Delta_fan}.
\end{proof}

\section{Note about the Diameter of a Polyhedron}\label{sec:diam_UB}

In this subsection, we prove \Cref{th:diamUB} by combining results from Dadush and H\"ahnle \cite{ShadowSimplexForCurved} with the trick described in \Cref{prop:totally_unimodular_set}. First, we recall several definitions from \cite{SubdeterminantsDiameter} and \cite{ShadowSimplexForCurved}.
\begin{definition}[$\delta$-distance Property]
    A set of linearly independent vectors $v_1, \dots, v_k \in \RR^n$ satisfies the \emph{$\delta$-distance property} if for every $i \in \intint k$, the vector $v_i$ is at Euclidean distance at least $\delta \norm{v_i}_2$ from $\linh\left(v_j \colon j \in \intint k \setminus i\right)$.

    For a polyhedron $\PC = \bigl\{ x \in \RR^n \colon A x \leq b \bigr\}$, we define $\PC$ to satisfy the \emph{local $\delta$-distance} property if every feasible basis of $A$, i.e. the rows of $A$ defining a vertex of $\PC$, satisfies the $\delta$-distance property.
\end{definition}

\begin{definition}[$\tau$-wide Polyhedra]
    We say that a cone $\CCal$ is \emph{$\tau$-wide} if it contains a Euclidean ball of radius $\tau$ centered on the unit sphere. We define a polyhedron $\PC$ to have a \emph{$\tau$-wide} normal fan (or simply $\PC$ to be $\tau$-wide) if every vertex normal cone is $\tau$-wide.
\end{definition}

According to \cite{ShadowSimplexForCurved}, the diameter of $\tau$-wide polyhedra is bounded by a polynomial function of $n$ and $\tau$.
\begin{theorem}[{\cite[Theorem 3]{ShadowSimplexForCurved}}]\label{th:tau_wide_diam}
    Let $\PC \subseteq \RR^n$ be an $n$-dimensional $\tau$-wide pointed polyhedron. Then the graph of $\PC$ has diameter bounded by $8n/\tau \cdot (1 + \ln 1/\tau)$. Furthermore, a path of this expected length can be constructed via the shadow simplex method.
\end{theorem}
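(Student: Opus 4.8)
\emph{Overall strategy.} I would prove the bound by analyzing the shadow-vertex simplex method and showing that, on a $\tau$-wide pointed polyhedron, a suitably randomized run of it connects any two prescribed vertices $u,v$ of $\PC$ by a walk in $\GC(\PC)$ whose expected length satisfies the stated bound. Since every run of the method outputs a genuine walk in $\GC(\PC)$ from $u$ to $v$, this immediately bounds $\diam(\GC(\PC))$, and it also yields the constructive (``a path of this expected length'') part of the statement.

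\emph{Reduction to a planar shadow.} Fix $u,v\in\vertex(\PC)$. Pick objectives $c_0\in\relint\FC_u$ and $c_1\in\relint\FC_v$, so that $u$ is the $c_0$-optimal vertex and $v$ the $c_1$-optimal one, and set $W=\linh(c_0,c_1)$. For generic $c_0,c_1$ the planar shadow $\pi_W(\PC)$ is a (possibly unbounded) polygon whose boundary edges are images of unique edges of $\PC$; hence the boundary of the shadow lifts to a walk in $\GC(\PC)$ through $u$ and $v$, and walking it from $\pi_W(u)$ to $\pi_W(v)$ gives a $u$--$v$ walk of length equal to the number of breakpoints of the parametric optimum $v(\lambda)$ along the segment $c(\lambda)=(1-\lambda)c_0+\lambda c_1$, $\lambda\in[0,1]$. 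A breakpoint occurs precisely when $c(\lambda)$ crosses a \emph{wall} $\FC_e\cap W$, where $\FC_e$ is the $(n-1)$-dimensional normal cone of an edge $e$ of $\PC$ (for a simple polyhedron, or after a generic perturbation in general). So it remains to bound the expected number of such walls met by the segment $[c_0,c_1]$.

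\emph{The core estimate.} Choose $c_0$ uniformly at random from $\FC_u\cap S^{n-1}$ and $c_1$ uniformly from $\FC_v\cap S^{n-1}$; by $\tau$-wideness these sets contain spherical caps of geodesic radius $\arcsin\tau\ge\tau$ around the centres of the inscribed $\tau$-balls. The expected number of pivots is then $\sum_{e}\Pr\bigl[[c_0,c_1]\ \text{crosses}\ \FC_e\cap W\bigr]$, and the heart of the argument is to bound this sum by $8n/\tau\cdot(1+\ln(1/\tau))$. The mechanism is that $\tau$-wideness makes the normal fan coarse: whenever the moving objective lies inside a cone $\FC_w$, that cone contains a $\tau$-ball, so each of its (at most $n$) walls is at ``angular distance'' $\gtrsim\tau/n$ from a generic interior point, and a straight-line objective path therefore crosses only $O(n/\tau)$ walls while staying at a fixed distance from its endpoints. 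Splitting $[c_0,c_1]$ into $O(\ln(1/\tau))$ dyadic scales according to distance to the two endpoints, bounding the contribution of each scale by $O(n/\tau)$ via an integral-geometric (Crofton-type) count, and summing over scales produces the harmonic factor $1+\ln(1/\tau)$ and the constant $8$.

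\emph{Where the difficulty lies.} The routine parts are the reduction to the shadow, the lifting of the shadow boundary to $\GC(\PC)$, and checking that the randomization leaves the output walk valid. The technical heart is the core estimate, and within it the behaviour near the two endpoints of $[c_0,c_1]$: away from the endpoints the random chord has enough ``jitter'' that each wall is crossed with small probability, but walls whose restriction to $W$ passes close to $c_0$ or $c_1$ are crossed with probability near $1$ and must be counted directly. This is exactly where one uses that $c_0,c_1$ lie deep inside $\tau$-caps of $\FC_u$ and $\FC_v$, so that only $O(n/\tau)$ walls can be that close, and organizing this over dyadic distance scales is what forces the $\ln(1/\tau)$ term.
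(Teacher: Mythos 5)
The paper does not prove this statement at all: it is imported verbatim as Theorem~3 of Dadush and H\"ahnle \cite{ShadowSimplexForCurved} and used as a black box in the derivation of \Cref{th:diamUB}. So the only meaningful comparison is with the proof in that reference, and at the level of strategy your outline does follow it: run the shadow (parametric) simplex method between objectives associated with the two prescribed vertices, exploit $\tau$-wideness through the inscribed $\tau$-balls of the normal cones $\FC_u,\FC_v$, lift the boundary of the planar shadow to a walk in $\GC(\PC)$, and bound the expected number of breakpoints of the parametric optimum along $[c_0,c_1]$.

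As a proof, however, your proposal has a genuine gap exactly at what you call the core estimate. The bound $8n/\tau\cdot(1+\ln 1/\tau)$ is asserted, and the one quantitative mechanism you offer --- that a cone containing a $\tau$-ball has each of its ``at most $n$'' walls at angular distance at least about $\tau/n$ from a generic interior point, whence $O(n/\tau)$ crossings per dyadic scale --- neither holds as stated nor delivers the bound. $\tau$-wideness places the \emph{center} of one inscribed ball at distance $\tau$ from every wall and says nothing about generic interior points; a vertex normal cone of a $\tau$-wide polyhedron need not be simplicial and can have arbitrarily many facets (one per incident edge), so ``at most $n$ walls'' is false and a per-wall distance bound alone does not control how many distinct cells of the normal fan a segment meets. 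In the cited proof, both the factor $n$ and the logarithm come out of an expected-crossing lemma for a segment from a random point of a $\tau$-ball (an averaging/scaling estimate along the path), not from the angular-separation observation you make, and the randomization there is over the inscribed $\tau$-caps, whereas you sample $c_0,c_1$ uniformly from the full caps $\FC_u\cap\sphere^{n-1}$, $\FC_v\cap\sphere^{n-1}$; for ``spiky'' cones this choice does not give the anti-concentration needed to make each wall-crossing probability small. The reduction to the planar shadow and the lifting of its boundary to $\GC(\PC)$ are standard and fine, and the endpoint discussion correctly identifies where the $\ln(1/\tau)$ must come from, but without a proof of the expected-crossing lemma the proposal is an outline of \cite{ShadowSimplexForCurved} rather than a derivation of the stated bound.
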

According to \cite{ShadowSimplexForCurved}, the parameter $\tau$ can be expressed as a function of $\delta$.
\begin{lemma}[{\cite[Lemma 5]{ShadowSimplexForCurved}}]\label{lm:diam_prop_transition}
    Let $v_1, \dots, v_n \in \sphere^{n-1}$ be a basis satisfying the $\delta$-distance property. Then $\cone(v_1, \dots, v_n)$ is $(\delta/n)$-wide.
\end{lemma}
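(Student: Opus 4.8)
The plan is to pass to the dual basis. Let $v_1^\ast, \dots, v_n^\ast \in \RR^n$ be determined by $\langle v_i^\ast, v_j\rangle = 1$ for $i = j$ and $\langle v_i^\ast, v_j\rangle = 0$ otherwise; this is well defined since $v_1, \dots, v_n$ are linearly independent. The starting observation is that, writing any $x \in \RR^n$ as $x = \sum_{i \in \intint n} \langle v_i^\ast, x\rangle v_i$, one has $\cone(v_1, \dots, v_n) = \bigl\{ x \in \RR^n \colon \langle v_i^\ast, x\rangle \geq 0 \text{ for all } i \in \intint n \bigr\}$. So it suffices to exhibit a point $c \in \sphere^{n-1}$ with $\langle v_i^\ast, c\rangle$ bounded away from $0$ for all $i$, and then a small ball around $c$ stays in the cone by Cauchy--Schwarz, provided we control $\norm{v_i^\ast}_2$.

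The key step is to restate the $\delta$-distance property as the bound $\norm{v_i^\ast}_2 \leq 1/\delta$. Indeed, $v_i^\ast$ is orthogonal to $\linh\bigl(v_j \colon j \in \intint n \setminus i\bigr)$ and satisfies $\langle v_i^\ast, v_i\rangle = 1$, so the orthogonal component of $v_i$ with respect to that span has length exactly $1/\norm{v_i^\ast}_2$, i.e.\ $\dist\bigl(v_i, \linh(v_j \colon j \neq i)\bigr) = 1/\norm{v_i^\ast}_2$. Hence the hypothesis that this distance is at least $\delta \norm{v_i}_2 = \delta$ is equivalent to $\norm{v_i^\ast}_2 \leq 1/\delta$.

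For the center I would take the normalized barycenter of the generators: set $s = \sum_{i \in \intint n} v_i$, which is nonzero by linear independence, and $c = s / \norm{s}_2 \in \sphere^{n-1}$. Since $\norm{s}_2 \leq \sum_{i} \norm{v_i}_2 = n$, we get $\langle v_i^\ast, c\rangle = \langle v_i^\ast, s\rangle / \norm{s}_2 = 1/\norm{s}_2 \geq 1/n$ for every $i$. Then for any $x$ with $\norm{x - c}_2 \leq \delta/n$, Cauchy--Schwarz and the bound from the previous paragraph give $\langle v_i^\ast, x\rangle \geq \langle v_i^\ast, c\rangle - \norm{v_i^\ast}_2 \norm{x - c}_2 \geq 1/n - (1/\delta)(\delta/n) = 0$, so $x \in \cone(v_1, \dots, v_n)$. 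Thus the Euclidean ball of radius $\delta/n$ centered at $c$ lies in the cone, and since $c$ lies on the unit sphere, the cone is $(\delta/n)$-wide.

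There is no serious obstacle here; the only point that needs care is the identity $\dist\bigl(v_i, \linh(v_j \colon j \neq i)\bigr) = 1/\norm{v_i^\ast}_2$ relating a basis to its dual, which is the heart of the argument. The choice of $c$ as the normalized sum of the generators is the natural ``deepest interior direction'' of the cone, and any lower bound $\langle v_i^\ast, c\rangle \geq 1/n$ would do — the elementary estimate $\norm{s}_2 \leq n$ delivers exactly this.
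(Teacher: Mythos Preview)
Your argument is correct. The dual-basis reformulation $\norm{v_i^\ast}_2 \leq 1/\delta$ is exactly the right translation of the $\delta$-distance property, and the choice $c = s/\norm{s}_2$ with $s = \sum_i v_i$ gives the clean lower bound $\langle v_i^\ast, c\rangle \geq 1/n$ needed for the Cauchy--Schwarz step.

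There is nothing to compare against in this paper, however: the lemma is quoted verbatim from \cite[Lemma~5]{ShadowSimplexForCurved} and is used here as a black box, with no proof supplied. Your write-up is essentially the argument one finds in that reference (dual basis plus an interior point on the sphere with all dual coordinates at least $1/n$), so you have recovered the intended proof.
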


We are now ready to prove our observation.
\begin{lemma}\label{lm:transfomed_poly_delta_distance}
    Let $\PC \subseteq \RR^n$ be a pointed $n$-dimensional polyhedron defined by the system $A x \leq b$. Denote $\Delta = \Delta(A)$ and $\Delta_{\min} = \Delta_{\min}(\PC,A)$. Additionally, let $B$ be an $n \times n$ submatrix of $A$ such that $\abs{\det B} = \Delta$. Then, the polyhedron $\PC'$ defined by the system $A B^{-1} x \leq b$ satisfies the local $\delta$-distance property with 
    \begin{equation*}
        \delta \geq \frac{\Delta_{\min}}{n \Delta}.
    \end{equation*}
\end{lemma}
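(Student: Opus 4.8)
The plan is to reduce the claim to a single-basis statement and then exploit the "totally unimodular" behavior guaranteed by \Cref{prop:totally_unimodular_set} applied to the matrix $B$. First, observe that a feasible basis of the system $A B^{-1} x \leq b$ is exactly a choice of $n$ rows $A_{\IC *}$ of $A$ such that $A_{\IC *} B^{-1}$ is nonsingular and the corresponding vertex of $\PC'$ is feasible; since $B^{-1}$ is a fixed isomorphism, these are in bijection with the feasible bases of $A x \leq b$. So it suffices to show: for every feasible basis $\IC$ of $A x \leq b$, the rows of $C := A_{\IC *} B^{-1} \in \RR^{n \times n}$ satisfy the $\delta$-distance property with $\delta \geq \Delta_{\min}/(n\Delta)$. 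Write $w_i^\top$ for the $i$-th row of $C$, i.e. $w_i^\top = A_{\IC_i *} B^{-1}$.

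The key estimate is the standard characterization of the $\delta$-distance property via determinants: the distance of $w_i$ from $\linh(w_j : j \neq i)$ equals $|\det C| / \vol \paral(w_j : j \neq i)$, where the denominator is the $(n-1)$-dimensional volume of the parallelepiped spanned by the remaining rows. Hence
\[
\dist\bigl(w_i, \linh(w_j : j \neq i)\bigr) = \frac{|\det C|}{\prod_{j \neq i}\|w_j\|_2} \cdot \frac{\prod_{j \neq i}\|w_j\|_2}{\vol \paral(w_j : j \neq i)} \geq \frac{|\det C|}{\prod_{j \neq i}\|w_j\|_2},
\]
so it is enough to bound $|\det C|$ from below and each $\|w_j\|_2$ from above. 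For the numerator, $|\det C| = |\det A_{\IC *}| / |\det B| = |\det A_{\IC *}|/\Delta \geq \Delta_{\min}/\Delta$, using that $\IC$ is a feasible basis and the definition of $\Delta_{\min}$ (here I use \Cref{rm:delta_avg_def} / the triangulation definition to identify feasible bases with simplicial cones of a triangulation of the normal fan). For the denominator, this is exactly where \Cref{prop:totally_unimodular_set} enters: taking $\KC$ to be (say) the set of rows of $A$ together with their negatives — or more simply applying the proposition's conclusion with the role of $V$ played by $B^\top$ — we get that $B^{-\top} A_{i *}^\top \in \BB_\infty^n$ for every row $A_{i *}$ of $A$, i.e. $\|w_j\|_\infty \leq 1$, hence $\|w_j\|_2 \leq \sqrt{n}$. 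Combining, $\dist(w_i, \linh(w_j : j\neq i)) \geq (\Delta_{\min}/\Delta) \cdot n^{-(n-1)/2}$, and since $\|w_i\|_2 \leq \sqrt{n}$ we get the $\delta$-distance property with $\delta \geq (\Delta_{\min}/\Delta)\cdot n^{-n/2}$ — which is weaker than claimed.

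To reach the stated bound $\delta \geq \Delta_{\min}/(n\Delta)$ one must be more careful than the crude Hadamard-type estimate above, and I expect this to be the main obstacle. The right move is to apply the totally-unimodular property not just to individual rows but to the $(n-1)$-subsets directly: the parallelepiped $\paral(w_j : j \neq i)$ sits inside the transformed body $\UC = B^{-\top}\KC$ (with $\KC$ the symmetric hull of the rows of $A$), and total unimodularity gives $\vol_{n-1}\paral(w_j : j\neq i) \leq \Delta_{n-1}(\UC)$; one then needs the bound $\Delta_{n-1}(\UC) \leq \sqrt{n}$, which follows because an $(n-1)$-dimensional sub-parallelepiped of a set lying in $\BB_\infty^n$ with unit $n$-volume bound has $(n-1)$-volume at most $\sqrt n$ (extend by a suitable unit vector and use $\Delta(\UC) \leq 1$ together with the fact that the "height" direction can be chosen with $\ell_2$-norm $\leq \sqrt n$, i.e. the discarded coordinate). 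Plugging $\vol_{n-1}\paral(w_j:j\neq i) \leq \sqrt{n}$ and $|\det C| \geq \Delta_{\min}/\Delta$ into $\dist(w_i,\linh(\dots)) = |\det C|/\vol_{n-1}\paral(w_j:j\neq i)$ gives distance at least $\Delta_{\min}/(\sqrt n\,\Delta)$, and dividing by $\|w_i\|_2 \leq \sqrt n$ yields $\delta \geq \Delta_{\min}/(n\Delta)$ as required. I would double-check the precise constant in the $(n-1)$-volume bound, since that is the one quantitative point where a factor could slip.
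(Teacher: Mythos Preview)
Your approach is essentially the paper's: transform by $B^{-1}$, bound $|\det C|\ge \Delta_{\min}/\Delta$ from below, and bound both $\|w_i\|_2$ and the $(n-1)$-volume of the remaining rows by $\sqrt n$ from above. The paper writes this as $\sin\varphi = |\langle v,u\rangle|/(\|v\|_2\|u\|_2)$ with $u$ the $i$-th column of the adjugate of $C$, which is literally the same computation since $\|u\|_2 = \vol_{n-1}\paral(w_j:j\ne i)$ and $\langle v,u\rangle=\det C$.

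The one place your write-up is shaky is the parenthetical justification for $\vol_{n-1}\paral(w_j:j\ne i)\le\sqrt n$. The ``extend by a suitable unit vector and use $\Delta(\UC)\le 1$'' argument does not work as stated: the orthogonal unit vector need not lie in $\UC$, so the $n$-dimensional bound $\Delta(\UC)\le 1$ does not apply. What you actually need---and what \Cref{prop:totally_unimodular_set} gives you directly---is that every $(n-1)\times(n-1)$ minor of $C=A_{\IC}B^{-1}$ has absolute value at most $1$. Then either via Cauchy--Binet, or equivalently by noting that the entries of the adjugate column $u$ are precisely these minors so $\|u\|_\infty\le 1$, you get $\vol_{n-1}=\|u\|_2\le\sqrt n$. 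That is the paper's argument, and it closes the step you flagged as the one ``where a factor could slip''.
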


\begin{proof}
    \GribanovAdd{
    Let $\TS$ be a triangulation of $\NS(\PC)$, for which the value of $\Delta_{\min}(\PC,A)$ is attained. 
    Denote $A' = A B^{-1}$, and observe that for each $\BC \in \TS$,
    \begin{equation}\label{eq:Delta_poly_transform_prop_1}
        \abs{\det A'_{\BC *}} \geq \frac{\Delta_{\min}}{\Delta}.
    \end{equation}
    Additionally, for any $k \in \intint n$ and each square submatrix $C \in \RR^{k \times k}$ of $A'$, we have
    \begin{equation}\label{eq:Delta_poly_transform_prop_2}
        \abs{\det(C)} \leq 1.
    \end{equation}
    }
    

    Fix some $\BC \in \TS$ and denote $M = A'_{\BC *}$. Additionally, fix some $i \in \intint n$, and let $v = (M_{i *})^\top$. The angle $0 \leq \varphi \leq \pi/2$ between $v$ and the subspace spanned by the remaining columns of $M$, i.e. $\linh(M_{\intint n \setminus i})$, can be expressed by the formula
    \begin{equation}\label{eq:cone_angle}
        \sin \varphi = \cos (\pi/2-\varphi) = \frac{\abs{\langle v, u \rangle}}{\norm{v}_2 \norm{u}_2},
    \end{equation}
    where $u$ is a normal vector to $\linh(M_{\intint n \setminus i})$. We can choose $u$ as the $i$-th column of the adjugate of $M$.  From \eqref{eq:Delta_poly_transform_prop_1} and \eqref{eq:Delta_poly_transform_prop_2}, we know that $\norm{u}_{\infty} \leq 1$, $\norm{v}_\infty \leq 1$, and $\abs{\langle v, u \rangle} = \abs{\det M} \geq \frac{\Delta_{\min}}{\Delta}$. Now the proof follows by applying these inequalities to the formula \eqref{eq:cone_angle}.
\end{proof}

As a consequence of \Cref{lm:transfomed_poly_delta_distance}, \Cref{lm:diam_prop_transition} and \Cref{th:tau_wide_diam}, we get our diameter bound.

\diamUB*


\section{Construction for Lower Bounds}\label{sec:LB}

In this subsection, we construct a sequence of examples demonstrating the tightness of inequalities in \Cref{cor:vol_Delta_fan} and \Cref{th:vertexUB}. This sequence simultaneously establishes the lower bound for polyhedral diameters by \Cref{prop:diamLB}. The construction leverages the approximation of the unit sphere by simplicial polytopes.

\subsection{Lower Bounds for Fan Size and Number of Vertices}

Our construction involves three interrelated objects: $\left\{ (\FS_k, R_k, \PC_k) \right\}_{k \geq 0}$, where:
\begin{itemize}
    \item $\FS_k$ is a simplicial polyhedral fan generated by columns of $R_k$,
    \item $\PC_k$ is a convex polytope such that the cones induced by $\PC_k$'s faces form $\FS_k$, i.e.,
    \begin{equation*}
        \FS_k = \left\{ \cone(\TC) \colon \TC \text{ is a face of } \PC_k \right\}.
    \end{equation*}
\end{itemize}
The construction begins with $\FS_k$, employing iterative subdivision of the facets of a regular simplex.  
\begin{definition}
    For $d \leq n$, let $\SC \subseteq \RR^n$ be a $d$-dimensional simplex with the vertices $v_1, v_2, \dots, v_{d+1}$. Then, the point
    \begin{equation*}
        b = \frac{1}{d+1} \cdot (v_1 + \ldots + v_{d+1})
    \end{equation*}
    is called the \emph{barycenter} of $\SC$.

    We define the \emph{simplicial subdivision operation on $\SC$} as an operation replacing $\SC$ with the set of the following $d+1$ subsimplices:
    \begin{equation*}
        \conv(v_1, \dots, v_{i-1}, b, v_{i+1}, \dots, v_{d+1}), \quad \text{for } i \in \intint{d+1}.
    \end{equation*}
    Denote this operation by $\subdiv(\SC)$. For $k \geq 1$, recursively define the $k$-th subdivision:
    \begin{gather*}
        \subdiv^k(\SC) = \bigcup_{\TC \in \subdiv^{k-1}(\SC)} \subdiv(\TC), \\
        \subdiv^0(\SC) = \{\SC\}.
    \end{gather*}
    \GribanovAdd{For the sake of clarity, we note that the $k$-the subdivision operation $\subdiv^k(\SC)$, started from a $n$-dimensional simplex $\SC$, will replace $\SC$ by a set of $(n+1)^k$ simplices of dimension $n$.}
\end{definition}

The following lemma establishes a key property of the subdivision operation: the vertices of subdivided simplices form a dense subset of the original simplex $\SC$. This should be well-known, but we could not find the proof in the literature. Thus, we provide a proof of the lemma in \Vref{proof:lm:subdiv}.
\begin{restatable}{lemma}{subdivisionDensity}
\label{lm:subdiv}
    Let $\SC \subseteq \RR^n$ be a $d$-dimensional simplex. For $k \geq 0$, let $\VC_k$ be the set of vertices of $\subdiv^k(\SC)$, and $\VC_{\infty} = \bigcup_{k \geq 0} \VC_k$. Then, the set $\VC_{\infty}$ forms a dense subset of $\SC$.
\end{restatable}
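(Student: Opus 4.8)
## Proof Proposal for Lemma \ref{lm:subdiv}

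The plan is to reduce the density question to a purely combinatorial statement about barycentric coordinates. Since a $d$-dimensional simplex $\SC \subseteq \RR^n$ is affinely equivalent to the standard simplex $\Delta^d = \{\lambda \in \RR^{d+1}_{\geq 0} \colon \sum_i \lambda_i = 1\}$, and affine maps preserve both density and the subdivision operation (the barycenter maps to the barycenter), it suffices to prove the statement for $\SC = \Delta^d$. In barycentric coordinates, the subdivision operation at a simplex with vertices $w_1,\dots,w_{d+1}$ replaces one vertex $w_i$ by $b = \frac{1}{d+1}\sum_j w_j$; so all vertices appearing in $\VC_\infty$ have barycentric coordinates (with respect to the original $v_1,\dots,v_{d+1}$) that are rational with denominator a power of $d+1$. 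I would first show that conversely \emph{every} such point arises, and then invoke that the set of vectors with coordinates in $\ZZ[1/(d+1)]$ is dense in $\Delta^d$.

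The key step is an induction on $k$ (the subdivision depth) to identify $\VC_k$ explicitly. First I would observe that after one subdivision, the $d+1$ subsimplices each have vertex set consisting of $d$ of the original vertices together with $b$; iterating, a vertex of $\subdiv^k(\SC)$ is obtained by a sequence of $k$ choices, at step $j$ picking which vertex of the current subsimplex to replace by its barycenter. I would track, for a fixed subsimplex $\TC \in \subdiv^k(\SC)$, the barycentric coordinates of its $d+1$ vertices as rationals over $\RR$, and prove by induction that the common denominator is $(d+1)^k$ and that every vertex of $\TC$ has coordinates in $\frac{1}{(d+1)^k}\ZZ^{d+1}_{\geq 0}$ summing to $(d+1)^k$. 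The cleaner formulation: the vertices of $\subdiv^k(\SC)$, written in barycentric coordinates and scaled by $(d+1)^k$, are exactly certain lattice points of the dilated simplex $(d+1)^k \Delta^d$; I would prove that as $k \to \infty$ the union of these (rescaled back) sets is dense.

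For the density conclusion I would argue as follows. It suffices to show that for every target point $p = \sum_i \mu_i v_i \in \SC$ (with $\mu \in \Delta^d$) and every $\varepsilon > 0$, some $v \in \VC_k$ lies within $\varepsilon$ of $p$ for $k$ large. Approximate $\mu$ by a point $\mu'$ with entries in $\frac{1}{(d+1)^k}\ZZ$ summing to $1$ — this is possible with error $O((d+1)^{-k})$ in each coordinate — and then it remains to check that such a rational point $\mu'$ is realized as the barycentric coordinate vector of an actual vertex of $\subdiv^k(\SC)$ (possibly after increasing $k$). For this realizability claim I would use the recursive structure: a point with coordinates $a_i/(d+1)^k$ (nonnegative integers $a_i$, $\sum a_i = (d+1)^k$) can be reached by running the choice process backwards — at the last step we must have come from a subsimplex one of whose vertices is a barycenter, which forces $\min_i a_i$ to be congruent appropriately; one shows there is always an admissible "un-subdivision" move decreasing $k$, terminating at $k=0$ where the only vertices are the $v_i$ themselves.

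The main obstacle is precisely this last realizability/reachability claim: it is easy to see all $\VC_k$-vertices have $(d+1)$-power-denominator barycentric coordinates, but the reverse inclusion (every such admissible coordinate vector is actually a vertex somewhere in the subdivision tree) requires a careful combinatorial argument about which lattice points of $(d+1)^k\Delta^d$ are hit. A convenient way to finesse this is to prove the weaker but sufficient statement directly by a compactness/covering argument: show each simplex in $\subdiv(\TC)$ has diameter at most $\frac{d}{d+1}\diam(\TC)$ (the barycenter of a $d$-simplex is within $\frac{d}{d+1}$ of each vertex relative to the diameter), so $\subdiv^k(\SC)$ consists of simplices of diameter $\leq \bigl(\tfrac{d}{d+1}\bigr)^k \diam(\SC) \to 0$, and these simplices cover $\SC$; hence every point of $\SC$ is within $\bigl(\tfrac{d}{d+1}\bigr)^k \diam(\SC)$ of some vertex in $\VC_k$. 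This diameter-shrinking route avoids the delicate lattice-point characterization entirely and is the version I would actually write up; the only nontrivial estimate is the bound $\dist(b, v_j) \leq \frac{d}{d+1}\diam(\TC)$, which follows since $b - v_j = \frac{1}{d+1}\sum_{i\neq j}(v_i - v_j)$ has norm at most $\frac{d}{d+1}\max_{i}\norm{v_i - v_j}$.
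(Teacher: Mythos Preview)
Your diameter-shrinking argument has a genuine gap: the bound $\diam\bigl(\subdiv(\TC)\bigr) \leq \tfrac{d}{d+1}\diam(\TC)$ is false for the subdivision defined in the paper. Each subsimplex $\conv(v_1,\dots,v_{i-1},b,v_{i+1},\dots,v_{d+1})$ retains $d$ of the original $d+1$ vertices, so every original edge $[v_j,v_k]$ with $j,k \neq i$ survives intact. Concretely, in a triangle with vertices $v_1,v_2,v_3$, the subsimplex $\conv(v_1,v_2,b)$ still contains the edge $[v_1,v_2]$; iterating, at every level there is a subsimplex containing both $v_1$ and $v_2$, so the maximum diameter over $\subdiv^k(\SC)$ never drops below $\|v_1-v_2\|_2$. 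You have confused this operation with the standard barycentric subdivision (into $(d+1)!$ pieces using all face barycenters), for which the $\tfrac{d}{d+1}$ bound does hold.

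What rescues the situation is that, although diameters do not shrink \emph{uniformly}, the subsimplices whose diameters stay large collapse onto lower-dimensional faces. The paper exploits this by following the sequence $\SC'_k$ of subsimplices containing a \emph{generic} point $x'$ (one avoiding all the lower-dimensional strata produced along the way) and showing that no vertex can be ``stable'' along this sequence: if some subset $\UC$ of vertices were never replaced, an auxiliary lemma shows the barycenters $b_k$ of $\SC'_k$ would converge to the barycenter of $\UC$, forcing $\SC'_k$ to eventually exclude $x'$. Once every vertex is eventually replaced, one obtains diameter decay along this particular path. Your lattice-point approach is unlikely to close the gap either, since (as you suspected) the reverse inclusion fails badly---already for $d=2$ only a vanishing fraction of the lattice points of $(d+1)^k\Delta^d$ are realized as vertices.
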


We now present our explicit construction. Let $\PC_0 \subseteq \RR^n$ be a regular $n$-dimensional simplex with the vertices lying in the unit sphere $\sphere^{n-1}$, and let $\TC_1, \dots, \TC_{n+1}$ denote its facets. For each facet $\TC_i$ and iteration $k \geq 0$, we define:
\begin{itemize}
    \item $\BS_{i}^k = \subdiv^k(\TC_i)$ (the $k$-th simplicial subdivision of $\TC_i$),
    \item $\VC_i^k$ = the set of vertices of all simplices in $\BS_{i}^k$,
\end{itemize}
Let $\BS^k = \bigcup_{i=1}^{n+1} \BS_i^k$ and $\VC^k = \bigcup_{i=1}^{n+1} \VC_i^k$. For each $k \geq 0$, we construct:
\begin{itemize}
    \item The polyhedral fan $\FS_k$ with $\Gamma_n(\FS_k) = \left\{ \cone(\SC) \colon \SC \in \BS^k \right\}$,
    \item The matrix $R_k$, whose columns are central projections of $\VC^k$ onto $\sphere^{n-1}$.
\end{itemize} 

Given the construction of $\PC_{k-1}$, we build $\PC_k$ via the following procedure.
\begin{algorithmic}[1]
\State Initialize $\PC_k \gets \PC_{k-1}$;
\For{each facet $\TC$ of $\PC_{k-1}$}
    \State Identify the unique $v \in \VC^k$ satisfying $\cone(v) \cap \inter(\TC) \neq \emptyset$;
    \State Select $\alpha > 0$ such that:
    \begin{itemize}
        \item $\alpha v \notin \PC_{k-1}$,
        \item $\TC$ is the sole visible facet from $\alpha v$;
    \end{itemize}
    \State Update $\PC_k \gets \conv(\PC_k, \alpha v)$;
\EndFor
\end{algorithmic}
The construction yields $\PC_k$ with these inductively verifiable properties (algorithm correctness follows from Property 3):
\begin{proposition}\label{prop:PC_properties}
    For all $k \geq 0$:
    \begin{enumerate}
        \item $\PC_k$ remains convex;
        \item $\exists$ diagonal matrix $D_k \succ \BZero$ such that the columns of $R_k D_k$ are the vertices of $\PC_k$;
        \item $\FS_k$ coincides with face cones of $\PC_k$:
            \begin{equation*}
                \FS_k = \left\{ \cone(\TC) \colon \TC \text{ is a face of } \PC_k \right\}.
            \end{equation*}
    \end{enumerate}
\end{proposition}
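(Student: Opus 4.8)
The plan is to prove the three properties simultaneously by induction on $k$, the engine being the classical \emph{beyond--beneath} (placing) description of how a polytope's face lattice changes when a new vertex is adjoined (see \cite{Grunbaum}). Along the way I also carry the auxiliary invariant $\BZero \in \inter(\PC_k)$, which is immediate since $\BZero \in \inter(\PC_0)$ (a regular simplex inscribed in $\sphere^{n-1}$) and $\PC_0 \subseteq \PC_1 \subseteq \cdots$. For the base case $k = 0$: $\PC_0$ is convex; $\VC^0$ is exactly the vertex set of $\PC_0$, which already lies on $\sphere^{n-1}$, so the columns of $R_0$ are these vertices and $D_0 = \IdOp$ works; and $\FS_0$ is, by its very definition, the fan of cones over the faces of $\PC_0$. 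So all three properties hold for $k=0$.

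For the inductive step, assume Properties 1--3 hold for $k-1$. Because $\BZero \in \inter(\PC_{k-1})$ and $\PC_{k-1}$ is convex, for a proper face $\TC$ one has $\dim \cone(\TC) = \dim(\TC)+1$, so taking $n$-dimensional elements in Property 3 gives a bijection $\TC \mapsto \cone(\TC)$ from the facets of $\PC_{k-1}$ onto $\Gamma_n(\FS_{k-1}) = \{\cone(\SC) \colon \SC \in \BS^{k-1}\}$; moreover $\SC \mapsto \cone(\SC)$ is injective on $\BS^{k-1}$ since each ray meets $\partial\PC_0$ once, so we may write $\SC_\TC \in \BS^{k-1}$ for the $(n-1)$-simplex with $\cone(\SC_\TC) = \cone(\TC)$. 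Passing from $\BS^{k-1}$ to $\BS^k$ replaces each $\SC \in \BS^{k-1}$ by the $n$ simplices of $\subdiv(\SC)$, so $\VC^k \setminus \VC^{k-1}$ is precisely the set of barycenters $\{b_\SC \colon \SC \in \BS^{k-1}\}$, with $b_\SC \in \relint(\SC)$. Since the cells of $\BS^{k-1}$ triangulate $\partial \PC_0$, their relative interiors are pairwise disjoint; hence, for a facet $\TC$ of $\PC_{k-1}$, the \emph{unique} $v \in \VC^k$ whose ray $\cone(v)$ meets $\inter(\TC)$ is the central projection $\widehat{b}_{\SC_\TC}$ of the barycenter of $\SC_\TC$. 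This makes line 3 of the algorithm well defined, and the choice of $\alpha$ in line 4 is the standard fact that a point placed on the ray $\cone(v)$ just outside $\relint(\TC)$ lies beyond $\TC$ and beneath every other facet of $\PC_{k-1}$.

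Now set $p_\TC = \alpha_\TC \widehat{b}_{\SC_\TC}$ as $\TC$ ranges over the facets of $\PC_{k-1}$; then $\PC_k = \conv\bigl(\PC_{k-1} \cup \{p_\TC\}\bigr)$ is convex, which is Property 1. As each $p_\TC$ is beyond exactly one facet (namely $\TC$), distinct facets receive distinct points on distinct rays, and every $p_\TC$ lies beneath the supporting hyperplanes of all other facets, the iterated beyond--beneath theorem applies and shows: (a) $\vertex(\PC_k) = \vertex(\PC_{k-1}) \cup \{p_\TC\}$ (no old vertex disappears, since the only visible facet from $p_\TC$ is $\TC$ and its vertices lie on its relative boundary); and (b) the facets of $\PC_k$ are obtained by deleting each $\TC$ and inserting, for each facet $G$ of the simplex $\TC$, the facet $\conv(p_\TC, G)$. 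For Property 2: the vertices of $\PC_{k-1}$ are positive multiples of the columns of $R_{k-1}$, equivalently of the columns of $R_k$ indexed by $\VC^{k-1}$; and each $p_\TC$ is a positive multiple of the column of $R_k$ indexed by $b_{\SC_\TC}$. As $\TC$ ranges over all facets of $\PC_{k-1}$, the $b_{\SC_\TC}$ exhaust $\VC^k \setminus \VC^{k-1}$, so the vertices of $\PC_k$ are exactly positive multiples of the columns of $R_k$; collecting the multipliers into a diagonal $D_k \succ \BZero$ gives Property 2. For Property 3: pass to cones. Since the vertices of $\TC$ are positive multiples of the vertices of $\SC_\TC$, we have $\cone(G) = \cone(G')$ for the corresponding facet $G'$ of $\SC_\TC$, hence $\cone\bigl(\conv(p_\TC, G)\bigr) = \cone(\widehat{b}_{\SC_\TC}) + \cone(G')$ is the cone over the corresponding simplex of $\subdiv(\SC_\TC)$. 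Ranging over all facets $\TC$, the facet-cones of $\PC_k$ are exactly $\{\cone(\SC') \colon \SC' \in \bigcup_{\SC \in \BS^{k-1}} \subdiv(\SC)\} = \{\cone(\SC') \colon \SC' \in \BS^k\} = \Gamma_n(\FS_k)$; the lower-dimensional face-cones of $\PC_k$ and of $\FS_k$ then agree because both families are closed under taking faces and pairwise intersections. Thus $\FS_k = \{\cone(\TC) \colon \TC \text{ a face of } \PC_k\}$, completing the induction.

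The main obstacle is the bookkeeping inside the iterated beyond--beneath step: one must verify that the points $p_\TC$ may be inserted one at a time, in any order, and that after some of them have been inserted each remaining $p_\TC$ is still beyond exactly the (now possibly re-split) region over $\SC_\TC$ and beneath everything else — this is precisely the content of the parenthetical remark that algorithm correctness follows from Property 3, and it rests on the facts that $p_\TC$ is strictly beneath the supporting hyperplane of every facet of $\PC_{k-1}$ other than $\TC$, and that the local refinement of $\TC$ produced by placing $p_\TC$ is combinatorially the barycentric subdivision $\subdiv(\SC_\TC)$. Once this is in place, Properties 1--3 follow mechanically as above.
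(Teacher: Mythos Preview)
The paper does not actually prove this proposition; it is stated with only the remark that the properties are ``inductively verifiable'' and that ``algorithm correctness follows from Property~3''. Your inductive argument via the beneath--beyond (placing) construction is precisely the natural way to fill in this omission, and the overall skeleton --- the base case, the bijection between facets of $\PC_{k-1}$ and cells of $\BS^{k-1}$, the identification of the new point over a facet $\TC$ as a positive multiple of the barycenter of $\SC_\TC$, and the description of the new facets of $\PC_k$ as tents $\conv(p_\TC,G)$ over the ridges $G$ of $\TC$ --- is correct and more detailed than anything in the paper.

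There is, however, a genuine gap in your treatment of what you yourself call the ``main obstacle''. You assert that the iterated placing works because each $p_{\TC}$ is strictly beneath every facet hyperplane of $\PC_{k-1}$ other than that of $\TC$. That condition by itself is \emph{not} sufficient. Already for $n=2$ and $k=1$: let $\PC_0$ be the regular triangle inscribed in $\sphere^{1}$, let $A$ be a vertex, and let $\TC,\TC'$ be the two edges meeting at $A$. The rays through the midpoints of $\TC,\TC'$ are symmetric about the ray through $A$, and one can choose $\alpha_{\TC},\alpha_{\TC'}$ large enough that $A \in [p_{\TC},p_{\TC'}]$ while both points still satisfy the two bullet conditions of line~4 literally (visibility against $\PC_0$). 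Then $A$ is not a vertex of $\PC_1$, and Properties~2 and~3 both fail. So the ``facts'' you invoke in your last paragraph do not close the gap.

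The fix is to read line~4 of the algorithm as requiring $\TC$ to be the sole visible facet \emph{of the current, already partially updated $\PC_k$} --- which is coherent because $\TC$ remains a facet until it is processed, every earlier $p_{\TC'}$ being beneath it --- or, equivalently, to insist that each $\alpha_{\TC}$ be chosen small enough relative to the current polytope rather than to $\PC_{k-1}$. Under that reading the single-step beneath--beyond theorem applies cleanly at every iteration of the for-loop, and your argument for (a), (b), Property~2 and Property~3 goes through verbatim. You should make this choice of $\alpha$ explicit; as written, the stated hypotheses do not imply the conclusion.
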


\GribanovAdd{
Recall the following definition.
\begin{definition}
    A subset $\YC$ of $\XC \subseteq \RR^n$ is an \emph{$\varepsilon$-dense subset of $\XC$} if 
    \begin{equation*}
        \forall x \in \XC,\, \exists y \in \YC\colon \quad \norm{x-y}_2 \leq \varepsilon.
    \end{equation*}
    In other words, $\YC$ forms an \emph{$\varepsilon$-cover of $\XC$.}
\end{definition}
}

\begin{lemma}\label{lm:FS_properties}
    The following properties hold:
    \begin{enumerate}
        \item \GribanovAdd{For any $\varepsilon > 0$ there exists $k' = k(\varepsilon)$ such that the set $\norm{\FS_{k'}}_{R_{k'}}$ is an $\varepsilon$-dense subset of $\BB_2^n$};
        \item $\lim_{k \to \infty} \Delta(R_k) = 1$;
        \item $\lim_{k \to \infty} \vol\left(\norm{\FS_k}_{R_k}\right) = \vol(\BB_2^n)$.
    \end{enumerate}
\end{lemma}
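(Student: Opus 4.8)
The plan is to derive all three properties from \Cref{lm:subdiv} together with standard facts about central projection onto the sphere and the geometry of the regular simplex. The key observation is that the columns of $R_k$ are precisely the central (radial) projections onto $\sphere^{n-1}$ of the vertices $\VC^k = \bigcup_i \VC_i^k$, where $\VC_i^k$ is the vertex set of the $k$-th barycentric subdivision of the facet $\TC_i$ of the regular simplex $\PC_0$. First I would record that the radial projection map $\pi \colon x \mapsto x/\norm{x}_2$ is a homeomorphism from $\partial \PC_0$ onto $\sphere^{n-1}$, and moreover that $\pi$ restricted to any one facet $\TC_i$ is uniformly continuous (it is smooth on the compact set $\TC_i$, which stays away from the origin since $\BZero \in \inter \PC_0$). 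Consequently $\pi$ has a modulus of continuity: for every $\varepsilon > 0$ there is $\delta > 0$ such that $\norm{x - y}_2 \le \delta$ on $\partial \PC_0$ implies $\norm{\pi(x) - \pi(y)}_2 \le \varepsilon$.

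\emph{Property 1.} By \Cref{lm:subdiv}, for each facet $\TC_i$ the union $\bigcup_{k \ge 0} \VC_i^k$ is dense in $\TC_i$; since the diameters of the simplices in $\subdiv^k(\TC_i)$ tend to $0$ (each subdivision step strictly shrinks diameters by a fixed factor bounded away from $1$, as in the proof of \Cref{lm:subdiv}), for any $\delta > 0$ there is $k'$ such that $\VC^{k'}$ is a $\delta$-dense subset of $\partial \PC_0$ and every simplex of $\BS^{k'}$ has diameter $\le \delta$. Applying the modulus of continuity of $\pi$, the projected vertices (the columns of $R_{k'}$) form an $\varepsilon$-dense subset of $\sphere^{n-1}$, and each cone $\cone(\SC)$ for $\SC \in \BS^{k'}$ projects to a spherical region of diameter $\le \varepsilon$. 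Hence $\norm{\FS_{k'}}_{R_{k'}} = \bigcup_{\SC} \conv(\BZero, \pi(\SC))$ is within Hausdorff distance $\varepsilon$ of the ball $\BB_2^n$; in particular every point of $\BB_2^n$ lies within $\varepsilon$ of $\norm{\FS_{k'}}_{R_{k'}}$, which is the claimed $\varepsilon$-density. (I would be slightly careful here: $\norm{\FS_{k'}}_{R_{k'}} \subseteq \BB_2^n$ since all columns of $R_{k'}$ are unit vectors, so only the ``covering'' direction of Hausdorff closeness needs to be established, and that is exactly what the $\varepsilon$-cover of the sphere plus convexity gives.)

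\emph{Properties 2 and 3.} Both are continuity/squeeze arguments. For Property 3, $\norm{\FS_k}_{R_k} \subseteq \BB_2^n$ for all $k$, so $\vol \norm{\FS_k}_{R_k} \le \vol \BB_2^n$; the reverse inequality follows from Property 1, since an $\varepsilon$-dense \emph{convex-union} subset of $\BB_2^n$ that is star-shaped about $\BZero$ contains $(1-\varepsilon')\BB_2^n$ for $\varepsilon' \to 0$ (radial density plus the fact that each projected simplex $\conv(\BZero,\pi(\SC))$ is full-dimensional near the boundary), hence $\vol \norm{\FS_k}_{R_k} \to \vol \BB_2^n$. For Property 2, $\Delta(R_k) = \vol \paral(\text{some } n \text{ columns of } R_k)$; each column has norm $1$, so $\Delta(R_k) \le 1$ by Hadamard, while $\Delta(R_k) \ge n! \cdot \vol\bigl(\text{largest simplex } \conv(\BZero,\pi(\SC)), \SC \in \BS^k\bigr)$, and as the spherical diameter of the projected simplices tends to $0$ one checks that the ``best'' such simplex, suitably rescaled, approximates a face of a near-ball inscribed polytope — more directly, $\Delta(R_k) \ge \vol\norm{\FS_k}_{R_k} / (\text{number of cones}) \cdot n!$ is too weak, so instead I would argue that among the columns of $R_k$ one can find $n$ of them spanning a parallelepiped of volume $\to 1$: pick $n$ mutually near-orthogonal unit vectors from the dense set $\pi(\VC^k)$ (possible for large $k$ by density on the sphere), giving $\det \to 1$. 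Combined with $\Delta(R_k)\le 1$, this yields $\lim_k \Delta(R_k) = 1$.

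\emph{Main obstacle.} The only genuinely delicate point is making Property 1 fully rigorous in the direction ``$\norm{\FS_{k'}}_{R_{k'}}$ covers $\BB_2^n$ up to $\varepsilon$'': one must ensure not merely that the \emph{vertices} $\pi(\VC^{k'})$ are $\varepsilon$-dense on the sphere, but that the union of the solid cones $\conv(\BZero, \pi(\SC))$ actually fills the ball up to the boundary, i.e.\ that the projected simplices tile the sphere without gaps. This is where the structural Property~3 of \Cref{prop:PC_properties} is used — the $\FS_k$ are genuine fans whose top cones partition $\RR^n$ — so $\bigcup_{\SC \in \BS^k} \cone(\SC) = \RR^n$ and therefore $\bigcup_{\SC} \conv(\BZero, \pi(\SC))$ is exactly the radial retract of $\partial\PC_0$-bounded region onto the unit ball's boundary, which is star-shaped about $\BZero$ and contains the sphere-shell up to depth controlled by $\max_\SC (1 - \min_{x \in \pi(\SC)} \dist(\BZero, \text{affine hull of } \pi(\SC)))$, a quantity that goes to $0$ with the spherical diameter. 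Spelling this out carefully is the crux; everything else is routine compactness and continuity.
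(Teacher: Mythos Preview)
Your proposal is correct and follows essentially the same route as the paper: density of the subdivision vertices (\Cref{lm:subdiv}) pushed through the uniformly continuous radial projection $\pi$ gives density of the columns of $R_k$ on $\sphere^{n-1}$, from which Properties~(2) and~(3) follow by the Hadamard/near-orthonormal-basis argument and the squeeze argument you describe.

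Where you overcomplicate things is Property~(1). You worry about showing that the projected simplices ``tile the sphere without gaps'' and invoke \Cref{prop:PC_properties} to ensure the cones cover $\RR^n$. The paper sidesteps all of this with a one-line observation: once you know the columns of $R_{k'}$ are $\varepsilon$-dense on $\sphere^{n-1}$, then for any $v \in \BB_2^n$ with radial projection $\hat v$, pick a column $\hat u$ with $\norm{\hat v - \hat u}_2 \le \varepsilon$ and note that the segment $[\BZero,\hat u]$ is already contained in $\norm{\FS_{k'}}_{R_{k'}}$ (since $\hat u$ is a ray of the fan, $\conv(\BZero,\hat u)$ sits inside every top-dimensional simplex it borders). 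Then $\dist_2(v,[\BZero,\hat u]) \le \norm{v - \norm{v}_2 \hat u}_2 = \norm{v}_2 \cdot \norm{\hat v - \hat u}_2 \le \varepsilon$. No need to control simplex diameters or appeal to the fan covering $\RR^n$; the segments from the origin to the columns already do the job. Your ``main obstacle'' is therefore not an obstacle at all once you use this observation.
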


\begin{proof}
    \GribanovAdd{We first prove that there exists $k' = k(\varepsilon)$ such that the columns of $R_{k'}$ form an $\varepsilon$-dense subset of $\sphere^{n-1}$.} Fix $\hat v \in \sphere^{n-1}$ and $\varepsilon > 0$. We will find $k' = k(\varepsilon)$ and a column $\hat u$ of $R_{k'}$ satisfying $\norm{\hat u - \hat v}_2 \leq \varepsilon$.

    Let $v$ be the intersection of $\PC_0$ with the ray $\{t \hat v : t \geq 0\}$, and let $\TC_1$ be the facet of $\PC_0$ containing $v$. For $x \in \TC_1$, denote by $\hat x$ its central projection to $\sphere^{n-1}$. By convexity, there exists $0 < c_n \leq 1$ (depending only on $n$) such that for all $x,y \in \TC_1$,
    \begin{equation*}
        \norm{\hat x - \hat y}_2 \leq c_n \cdot \norm{x - y}_2.
    \end{equation*}
    \Cref{lm:subdiv} guarantees $k'$ and $u \in \VC_1^{k'}$ with $\norm{v - u}_2 \leq \varepsilon/c_n$, implying $\norm{\hat v - \hat u}_2 \leq \varepsilon$. By construction, $\hat u$ is a column of $R_{k'}$, proving the claim.

    To prove (1), fix $v \in \BB_2^n \setminus \PC_0$ and $\varepsilon > 0$. Let $\hat v$ be a central projection of $v$. The claim yields $k'$ and a column $\hat u$ of $R_{k'}$ with $\norm{\hat v - \hat u}_2 \leq \varepsilon$. Since $[0, \hat u] \subset \norm{\FS_{k'}}_{R_{k'}}$,
    \begin{equation*}
        \dist_2\left(v, \norm{\FS_{k'}}_{R_{k'}}\right) \leq \dist_2\bigl(v, [0, \hat u]\bigr) \leq \varepsilon.
    \end{equation*}
    Properties (2) and (3) follow directly from (1).
\end{proof}

\GribanovAdd{Now we are ready to show that the inequalities \eqref{eq:fan_vol_Delta_ineq}, \eqref{eq:fan_NVertex_Delta_ineq} in \Cref{cor:vol_Delta_fan}, and \Cref{th:vertexUB} are asymptotically tight.
\begin{proposition}\label{prop:tightness_fans}
    For $k \to \infty$, we have
    \begin{gather*}
        \vol\left(\norm{\FS_k}_{R_k}\right) \sim \Delta(R_k) \cdot \vol(\BB_2^n), \quad\text{and}\\
        \abs{\FS_k} \sim n! \cdot \vol(\BB_2^n) \cdot \frac{\Delta(R_k)}{\Delta_{\average}(\FS_k,R_k)}.
    \end{gather*}
\end{proposition}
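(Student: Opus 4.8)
The plan is to combine the three properties established in Lemma~\ref{lm:FS_properties} directly. Write $\KC_k = \norm{\FS_k}_{R_k}$. By the inequality \eqref{eq:fan_vol_Delta_ineq} of Corollary~\ref{cor:vol_Delta_fan}, applied to the packing $\Gamma_n(\FS_k)$ based on the columns of $R_k$, we have $\vol(\KC_k) \leq \Delta(R_k) \cdot \vol(\BB_2^n)$ for every $k$. On the other hand, part~(3) of Lemma~\ref{lm:FS_properties} gives $\vol(\KC_k) \to \vol(\BB_2^n)$ and part~(2) gives $\Delta(R_k) \to 1$. Hence $\Delta(R_k)\cdot\vol(\BB_2^n) \to \vol(\BB_2^n)$ as well, and since $\vol(\KC_k)$ is squeezed between a sequence tending to $\vol(\BB_2^n)$ and the constant-limit upper bound $\Delta(R_k)\vol(\BB_2^n)$, both $\vol(\KC_k)$ and $\Delta(R_k)\vol(\BB_2^n)$ share the common positive limit $\vol(\BB_2^n) > 0$; therefore their ratio tends to $1$, which is exactly $\vol(\KC_k) \sim \Delta(R_k)\cdot\vol(\BB_2^n)$.

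For the second asymptotic, recall from the proof of Corollary~\ref{cor:vol_Delta_fan} the exact identity
\begin{equation*}
    \vol(\KC_k) = \frac{1}{n!}\cdot\abs{\FS_k}\cdot\Delta_{\average}(\FS_k,R_k),
\end{equation*}
which holds because $\KC_k$ is the (essentially disjoint, as $\Gamma_n(\FS_k)$ is a packing) union over $\FC\in\Gamma_n(\FS_k)$ of the simplices $\conv(\BZero, (R_k)_{*\FC})$, each of volume $\frac{1}{n!}\abs{\det (R_k)_{*\FC}}$. Rearranging,
\begin{equation*}
    \abs{\FS_k} = n!\cdot\frac{\vol(\KC_k)}{\Delta_{\average}(\FS_k,R_k)} = n!\cdot\vol(\BB_2^n)\cdot\frac{\Delta(R_k)}{\Delta_{\average}(\FS_k,R_k)}\cdot\frac{\vol(\KC_k)}{\Delta(R_k)\cdot\vol(\BB_2^n)}.
\end{equation*}
By the first part of the proposition the last factor tends to $1$, which yields $\abs{\FS_k}\sim n!\cdot\vol(\BB_2^n)\cdot\Delta(R_k)/\Delta_{\average}(\FS_k,R_k)$, as claimed.

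The only genuinely substantive input is Lemma~\ref{lm:FS_properties}; everything here is the squeeze argument plus the volume-decomposition identity, both routine. The one point deserving a line of care is that the limit $\vol(\BB_2^n)$ is strictly positive, so that passing from "difference tends to $0$" / "both tend to the same limit" to "ratio tends to $1$" is legitimate; this is immediate since $\BB_2^n$ is full-dimensional. I would also remark explicitly that $\Delta_{\average}(\FS_k,R_k) > 0$ for each $k$, so the ratio is well defined — this follows because the simplicial cones in $\Gamma_n(\FS_k)$ are $n$-dimensional, hence have nonzero determinant.
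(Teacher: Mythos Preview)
Your proof is correct and follows essentially the same approach as the paper: both parts rest on Lemma~\ref{lm:FS_properties} (properties (2) and (3)) together with the volume-decomposition identity $\vol(\KC_k)=\frac{1}{n!}\abs{\FS_k}\,\Delta_{\average}(\FS_k,R_k)$. The paper's argument is slightly terser, simply noting that both $\vol(\KC_k)$ and $\Delta(R_k)\vol(\BB_2^n)$ tend to $\vol(\BB_2^n)$, while your invocation of the upper bound~\eqref{eq:fan_vol_Delta_ineq} and the extra care about positivity of limits are harmless additions rather than a different route.
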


\begin{proof}
    The first asymptotic equality directly follows from \Cref{lm:FS_properties}. From the definition of $\Delta_{\average}(\FS_k,R_k)$ and by \Cref{lm:FS_properties}:
    \begin{equation*}
        \abs{\FS_k} = n! \cdot \vol\left(\norm{\FS_k}_{R_k}\right) \cdot \frac{1}{\Delta_{\average}(\FS_k,R_k)} \sim n! \cdot \vol(\BB_2^n) \cdot \frac{\Delta(R_k)}{\Delta_{\average}(\FS_k,R_k)},
    \end{equation*}
    which proves the second asymptotic equality.
\end{proof}
}

\vertexLB*

\begin{proof}\label{proof:th:vertexLB}
    By polarity, the dual polytope to $\PC$ satisfies:
    \begin{multline*}
        \PC_k^{\circ} = \left\{ x \in \RR^n \colon (R_k D_k)^\top x \leq \BUnit \right\} \\
        = \left\{ x \in \RR^n \colon R_k^\top x \leq D_k^{-1} \BUnit \right\},
    \end{multline*}
    where vertices of $\PC_k^{\circ}$ correspond bijectively to facets of $\PC_k$. Since $\PC_k$ is simplicial, $\PC_k^{\circ}$ is simple. Consequently:
    \begin{itemize}
        \item $\FS_k$ coincides with the normal fan of $\PC_k^{\circ}$,
        \item $\abs{\vertex(\PC_k^\circ)} = \abs{\FS_k}$,
        \item Since $\PC_k^{\circ}$ is simple, $\Delta_{\average}(\FS_k,R_k) = \Delta_{\average}(\PC_k^{\circ}, R_k^\top )$.
    \end{itemize}
    Finally, \GribanovAdd{setting $A_k = R_k^\top$ and $b_k = D_k^{-1} \BUnit$, \Cref{th:vertexUB} follows immediately from \Cref{prop:tightness_fans}.}
\end{proof}

\subsection{Diameter Lower Bounds}

As previously noted, the diameter lower bound construction is nearly identical. Specifically, we reuse the polyhedral fan $\FS_k$ but modify its base matrix (denoted hereafter by $M_k$). Let $\GC_k$ denote the graph of $\FS_k$ according to \Cref{def:graph_of_fan}.
\begin{lemma}
    The number of vertices in $\GC_k$ is $(n+1)\cdot n^k$. The diameter of $\GC_k$ is $2^{k+1}-1$.
\end{lemma}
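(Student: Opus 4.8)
The plan is to pin down $\GC_k$ combinatorially and then read off its metric from the self-similar structure produced by iterated subdivision. Since $\Gamma_n(\FS_k)=\{\cone(\SC):\SC\in\BS^k\}$ and the simplices of $\BS^k$ tile $\partial\PC_0$, distinct simplices give distinct cones, so $\abs{\vertex(\GC_k)}=\abs{\BS^k}=\sum_{i=1}^{n+1}\abs{\subdiv^k(\TC_i)}$; each $\TC_i$ is an $(n-1)$-simplex and one step of $\subdiv$ turns a $d$-simplex into $d+1$ cells, so $\abs{\subdiv^k(\TC_i)}=n^k$ and the count is $(n+1)n^k$. For the edges, $\FS_k$ is the face fan of $\PC_k$ and $\BZero\in\inter\PC_k$, so any two full-dimensional cones satisfy $\cone(\SC_1)\cap\cone(\SC_2)=\cone(\SC_1\cap\SC_2)$ with $\SC_1\cap\SC_2$ a common face of these two $(n-1)$-simplices (here $\BS^k$ is a simplicial complex triangulating $\partial\PC_0\cong\sphere^{n-1}$). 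Comparing dimensions, $\cone(\SC_1)$ and $\cone(\SC_2)$ are $\GC_k$-adjacent iff $\SC_1,\SC_2$ share a facet; thus $\GC_k$ is the dual (facet-adjacency) graph of the triangulation $\BS^k$ of $\sphere^{n-1}$.

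Next I use that $\subdiv$ does not refine the boundary of a simplex: each facet of $\SC$ is a facet of exactly one cell of $\subdiv(\SC)$, hence by induction each facet $F$ of $\SC$ is a facet of a unique cell of $\subdiv^k(\SC)$, its \emph{port}. Consequently $\BS^k$ restricts to $\subdiv^k(\TC_i)$ on each facet $\TC_i$ of $\PC_0$, and there is exactly one $\GC_k$-edge between distinct facets $\TC_i,\TC_j$, joining the port of the undivided ridge $\TC_i\cap\TC_j$ inside $\TC_i$ to its counterpart inside $\TC_j$. Unwinding the first subdivision the same way, the dual graph $H(d,k)$ of $\subdiv^k$ of a $d$-simplex ($d\ge 1$) is obtained from $d+1$ disjoint copies of $H(d,k-1)$ by adding, for each pair $i\ne j$, a single edge from ``port $j$ of copy $i$'' to ``port $i$ of copy $j$'', with the ports of $H(d,k)$ being ``port $i$ of copy $i$'' and $H(d,0)$ a single vertex. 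In particular $\GC_k$ is $n+1$ copies $B_1,\dots,B_{n+1}$ of $H(n-1,k)$, with a single edge $p^{(i)}_jp^{(j)}_i$ between $B_i$ and $B_j$ for each $i\ne j$, where $\{p^{(i)}_j:j\ne i\}$ are the $n$ ports of $B_i$.

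I then claim that in $H(d,k)$ ($d\ge 1$) any two ports are at distance $2^k-1$, every port has eccentricity $2^k-1$, and $\diam H(d,k)=2^k-1$; this is induction on $k$, with $k=0$ a point and $k=1$ the complete graph $K_{d+1}$. The engine is a ``no useful detour'' principle: because any two copies of $H(d,k-1)$ are joined by a \emph{single} edge, a shortest path neither re-enters a copy it has left nor gains by leaving a copy, so within-copy distances coincide with those of the standalone $H(d,k-1)$ and a shortest path between two copies traverses the unique connecting edge; feeding in the inductive hypothesis gives $(2^{k-1}-1)+1+(2^{k-1}-1)=2^k-1$ for all three quantities. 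The same principle at the top level gives, for $x\in B_i$, $y\in B_j$ with $i\ne j$, $\dist(x,y)\le(2^k-1)+1+(2^k-1)=2^{k+1}-1$ by routing through $p^{(i)}_jp^{(j)}_i$, and $\dist(x,y)\le 2^k-1$ if $x,y$ lie in one block; hence $\diam\GC_k\le 2^{k+1}-1$.

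For the reverse inequality, assume $n\ge 3$ (so there are at least four facet-blocks) and take $x^\ast=p^{(1)}_3\in B_1$, $y^\ast=p^{(2)}_4\in B_2$. The direct route has length $\dist_{B_1}(p^{(1)}_3,p^{(1)}_2)+1+\dist_{B_2}(p^{(2)}_1,p^{(2)}_4)=(2^k-1)+1+(2^k-1)=2^{k+1}-1$. Any other route uses $r\ge 2$ inter-block edges; between two consecutive such edges the path crosses a block from one port to a \emph{different} port (cost $2^k-1$), so its length is at least $(r-1)(2^k-1)+r$ plus the two end-segments inside $B_1$ and $B_2$, and since these end-segments cannot both vanish (that would force the indices $3$ and $4$ to coincide) the length is $\ge 2^{k+1}>2^{k+1}-1$. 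Hence $\dist(x^\ast,y^\ast)=2^{k+1}-1=\diam\GC_k$. The main obstacle throughout is making the ``no useful detour'' principle precise — that simple shortest paths do not re-enter a block and that within-block distances are intrinsic; once that is secured, everything else is bookkeeping on the recursion. (For $n=2$ the graph $\GC_k$ degenerates to a cycle and must be treated separately.)
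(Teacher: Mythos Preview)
Your proof is correct for $n \ge 3$ and takes a genuinely different route from the paper. The paper argues by a ``temporal'' recursion: $\GC_k$ is obtained from $\GC_{k-1}$ by replacing each vertex with a clique $K_n$ and each edge by a single matching edge between the corresponding cliques; it then asserts (with only a sketch of justification) that a diameter-realizing geodesic of length $2^k-1$ in $\GC_{k-1}$ lifts to one of length $2\cdot 2^k - 1 = 2^{k+1}-1$ in $\GC_k$, and that nothing longer is possible. You instead decompose $\GC_k$ spatially into the $n+1$ facet-blocks $B_i \cong H(n-1,k)$ joined pairwise by single edges, and run a separate induction establishing that in $H(d,k)$ all three quantities (port-to-port distance, port eccentricity, diameter) equal $2^k-1$. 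This buys you a genuinely explicit lower bound: you exhibit a concrete pair $(x^\ast,y^\ast)$ and rule out all alternative routings by a case analysis on the number $r$ of inter-block edges, whereas the paper's lower bound is essentially asserted. Your observation that $n=2$ must be treated separately is correct and worth flagging: for $n=2$ the graph $\GC_k$ is a $(3\cdot 2^k)$-cycle with diameter $3\cdot 2^{k-1}$, which disagrees with $2^{k+1}-1$ already at $k=2$; the paper does not address this.

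One small point of care: your ``no useful detour'' principle is stated rather than proved. A simple path \emph{can} leave a block and return to it via a third block without repeating an edge, so the correct statement is that such an excursion is never profitable. This follows from your own port-to-port computation (an excursion from port $j$ to port $\ell$ of copy $i$ avoiding copy $i$ costs at least $2^k+1$, versus $2^{k-1}-1$ for the internal route), but it deserves one line of justification. Similarly, in your $\GC_k$ lower bound the remark ``both end-segments cannot vanish'' covers only $r=2$; for $r\ge 3$ both can vanish, but then $(r-1)(2^k-1)+r \ge 2^{k+1}+1$ already, so the conclusion stands.
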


\begin{proof}
    The base case $\GC_0 = \KC_{n+1}$ is trivial. For $k \geq 1$, the graph $\GC_k$ can be constructed from $\GC_{k-1}$ by the following procedure.    
    \begin{itemize}
        \item \GribanovAdd{Consider the graph $\HC_k$ with $(n+1)\cdot n^k$ vertices consisting of $(n+1)\cdot n^{k-1}$ disjoint cliques isomorphic to $\KC_n$. Each clique $\CCal_v$ of $\HC_k$ bijectively corresponds to a vertex $v \in \GC_{k-1}$;}
        \item \GribanovAdd{ We define the graph $\GC_k$ to be equal to $\HC_k$ with the following additional edges. For any edge $(u,v)$ of $\GC_{k-1}$, consider the corresponding cliques $\CCal_u$ and $\CCal_v$ in $\GC_k$. Chose a pair of vertices $(\hat u, \hat v) \in \CCal_u \times \CCal_v$ such that $\hat u$ and $\hat v$ have no incident edges in $\GC_k$ except edges of the cliques $\CCal_u$ and $\CCal_v$. Draw an edge $(\hat u, \hat v)$ into $\GC_k$. }
    \end{itemize}
    \GribanovAdd{
        In other words, the set of edges of $\GC_k$, which was added during the second step, forms a perfect matching. Furthermore, each such an edge is drawn if and only if the corresponding edge exists in $\GC_{k-1}$.  
    }
    
    \GribanovAdd{
        It is straightforward to see that $\GC_k$ is a graph of $\FS_k$. Indeed, the fan $\FS_k$ is constructed from $\FS_{k-1}$ using the subdivision operation applied to each element of $\Gamma_n(\FS_{k-1})$. The subdivision operation replaces each simplicial cone of $\Gamma_n(\FS_{k-1})$ by $n$ simplicial cones which are pair-wise adjacent. Using the graph theoretical language, we just replace each vertex of $\GC_{k-1}$ by a clique. Next, for each pair of adjacent simplicial cones $\FC_1,\FC_2$ in $\Gamma_n(\FS_{k-1})$, there exists exactly one pair of adjacent simplicial cones $\FC_1'$ and $\FC_2'$ such that $\FC_1'$ and $\FC_2'$ are elements of the subdivisions of $\FC_1$ and $\FC_2$, respectively. This exactly corresponds to the described procedure which draws additional edges to $\GC_k$. 
    }
    
    \GribanovAdd{By the construction, the number of vertices in $\GC_k$ is $(n+1) \cdot n^k$.} For the diameter, we proceed by induction:
    \begin{itemize}
        \item {Base case ($k=0$)}: $\diam(\GC_0) = 1$ holds;
        \item {Inductive step}: For a \GribanovAdd{maximum} path $P = (v_1, \dots, v_s)$ composed of vertices of $\GC_{k-1}$ with $s = 2^k$, its $\GC_k$-realization consists of a sequence of cliques $(\CCal_{v_1}, \dots, \CCal_{v_s})$ where:
            \begin{itemize}
                \item Adjacent cliques share exactly one edge;
                \item \GribanovAdd{There are no edges connecting vertices of cliques that are not adjacent. In other words, there are no edges $(p,q)$ of $\GC_k$ of the form $p \in \CCal_{v_i}$ and $q \in \CCal_{v_j}$ with $\abs{i-j} > 1$. This fact follows from maximality of $P$.}
                \item \GribanovAdd{This yields a path of length $s + (s-1) = 2^{k+1} - 1$ in $\GC_k$. Clearly, it impossible to obtain a path of longer length, because all the paths in $\GC_k$ have the described structure, and the maximum path can be obtained only from the maximum path in $\GC_{k-1}$.}
            \end{itemize}
    \end{itemize}
\end{proof}

Let $M_k$ be the matrix with columns corresponding to $\VC_k$. Note that $\vol(\PC_0) = (n+1) \cdot \frac{\Delta(M_0)}{n!}$. By \Cref{rm:Delta_extreme_rm}, $\Delta(M_k) = \Delta(M_0)$ for all $k$. Since the subdivision operation splits each $n$-dimensional simplex into $(n+1)$ simplices of equal volume, we obtain:
\begin{equation*}
    (n+1)\cdot n^k = \frac{n! \cdot \vol \PC_0}{\Delta_{\min}(\FS_k,M_k)} = (n+1) \cdot \frac{\Delta(M_k)}{\Delta_{\min}(\FS_k,M_k)}.
\end{equation*}
Combining the identity $\diam(\GC_k) = 2^{k+1}-1$ with the derived relations yields:
\begin{gather*}
    \frac{\ln \left(\diam(\GC_k)+1\right)}{\ln 2} = 1 + \frac{\ln \left(\frac{\Delta(M_k)}{\Delta_{\min}(\FS_k,M_k)}\right)}{\ln n}, \\
    \diam(\GC_k)+1 = 2 \left(\frac{\Delta(M_k)}{\Delta_{\min}(\FS_k,M_k)}\right)^{\frac{\ln 2}{\ln n}}.
\end{gather*}

This proves the existence of a diameter lower bound of a polyhedral fan.
\begin{proposition}\label{prop:fan_diam_LB}
    There exists a sequence $\left\{(\FS_k, M_k)\right\}_{k \geq 0}$ where:
    \begin{itemize}
        \item $\FS_k$ is a polyhedral fan based on the columns of $M_k$,
        \item The diameter of $\GC(\FS_k)$ satisfies:
        \[
        \diam\bigl(\GC(\FS_k)\bigr) = 2^{k+1}-1 = 2 \left(\frac{\Delta(M_k)}{\Delta_{\min}(\FS_k,M_k)}\right)^{1/\log_2 n} - 1.
        \]
    \end{itemize}
\end{proposition}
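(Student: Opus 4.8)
The plan is to reuse the objects $\left\{(\FS_k, R_k, \PC_k)\right\}_{k \ge 0}$ constructed in the previous subsection, but to take as the base matrix of $\FS_k$ the matrix $M_k$ whose columns are the vertices $\VC^k$ of the simplices in the $k$-th barycentric subdivision $\BS^k$ of the facets of the regular simplex $\PC_0$ (i.e.\ the points before the central projection that produced $R_k$). By \Cref{prop:PC_properties} the cones $\cone(\SC)$ with $\SC \in \BS^k$ are exactly the face cones of the convex polytope $\PC_k$, so $\FS_k$ is a polyhedral fan, and since each such $\SC$ has its vertices among the columns of $M_k$, this fan is based on the columns of $M_k$. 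The preceding lemma then immediately gives that $\GC(\FS_k)$ has $(n+1) n^k$ vertices and diameter exactly $2^{k+1} - 1$, so the only remaining task is to rewrite the exponent $2^k$ in terms of $\Delta(M_k)/\Delta_{\min}(\FS_k, M_k)$.

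Next I would pin down the two subdeterminant parameters. A barycentric subdivision introduces only barycenters of faces, which are convex combinations of vertices already present, so the extreme points of $\conv(\text{columns of } M_k)$ coincide with $\vertex(\PC_0)$ for every $k$; hence \Cref{rm:Delta_extreme_rm} gives $\Delta(M_k) = \Delta(M_0)$ for all $k$. For $\Delta_{\min}$ I would track one subdivision step: a child cone is obtained from its parent by replacing one generator $w_i$ with the barycenter $b = \frac{1}{n}(w_1 + \dots + w_n)$ of the associated $(n-1)$-dimensional simplex, and expanding the determinant along the altered column while discarding the $n-1$ terms with a repeated column gives $\abs{\det(\text{child})} = \frac{1}{n}\abs{\det(\text{parent})}$. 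By symmetry of $\PC_0$, each of the $n+1$ cones in $\Gamma_n(\FS_0)$ has $\abs{\det} = \Delta(M_0)$, so after $k$ rounds every cone in $\Gamma_n(\FS_k)$ has $\abs{\det} = \Delta(M_0)/n^k$; in particular $\Delta_{\min}(\FS_k, M_k) = \Delta(M_0)/n^k$, and therefore $\Delta(M_k)/\Delta_{\min}(\FS_k, M_k) = n^k$.

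Finally I would combine the identities, using $n^{1/\log_2 n} = 2$:
\[
    \diam\bigl(\GC(\FS_k)\bigr) + 1 = 2^{k+1} = 2 \cdot \bigl(n^k\bigr)^{1/\log_2 n} = 2 \left(\frac{\Delta(M_k)}{\Delta_{\min}(\FS_k, M_k)}\right)^{1/\log_2 n},
\]
and subtracting $1$ yields the claimed formula. The genuinely delicate point is not this bookkeeping but the structural input behind the preceding lemma, namely that the abstract graph described there really is $\GC(\FS_k)$ — in particular that the edges added when passing from $\GC_{k-1}$ to $\GC_k$ form a perfect matching reproducing exactly the adjacencies of $\GC_{k-1}$, so that no shortcut between non-adjacent cliques appears and the diameter cannot drop below $2^{k+1} - 1$. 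Once that, together with the convex realizability from \Cref{prop:PC_properties}, is in place, the determinant computation above closes the argument.
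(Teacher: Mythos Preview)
Your proposal is correct and follows essentially the same route as the paper: define $M_k$ via the unprojected vertices $\VC^k$, use \Cref{rm:Delta_extreme_rm} to get $\Delta(M_k)=\Delta(M_0)$, show each subdivision step scales the subdeterminant by $1/n$ so that $\Delta(M_k)/\Delta_{\min}(\FS_k,M_k)=n^k$, and then combine with the preceding diameter lemma. The only cosmetic difference is that the paper phrases the $1/n$ factor as an equal-volume splitting of $\PC_0$ into $(n+1)n^k$ pieces, whereas you compute it directly by expanding the determinant along the replaced column; these are the same computation in different clothing.
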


Following the proof technique of \Cref{prop:vertexLB}, we obtain a diameter lower bound for polyhedra.

\diamLB*
\begin{proof}\label{proof:prop:diamLB}
    Following the proof of \Cref{prop:vertexLB}, consider the simplicial polytope
    \begin{equation*}
        \PC_k^{\circ} = \left\{ x \in \RR^n \colon (R_k D_k)^\top x \leq \BUnit \right\}.
    \end{equation*}
    Recall that $\FS_k$ coincides with the normal fan of $\PC_k^{\circ}$, which implies the equality $\diam\bigl(\GC(\PC_k^{\circ})\bigr) = \diam\bigl(\GC(\FS_k)\bigr)$. 

    By construction of $R_k$ and $M_k$, there exists a diagonal matrix $\DC'_k \succ \BZero$ satisfying $R_k D_k = M_k D'_k$. Setting $A_k = M_k^\top$ and $h_k = (D'_k)^{-1} \BUnit$, we obtain the equivalent representation
    \begin{equation*}
        \PC_k^{\circ} = \left\{ x \in \RR^n \colon M_k^\top x \leq h_k \right\}.
    \end{equation*}
    The result then follows directly from \Cref{prop:fan_diam_LB}.
\end{proof}

\section{Algorithmic Implications}\label{sec:algo}

\subsection{Computing the Convex Hull}

The following lemma describes an algorithm for constructing a convex hull via the online construction of a normal fan triangulation.  
\begin{lemma}\label{lm:convex_hull_complexity}  
    Let $\PC$ be a polyhedron defined by the system $A x \leq b$, where $A \in \QQ^{m \times n}$, $b \in \QQ^m$, and $\rank A = n$. Let $\tau$ be an upper bound for the sizes of normal fan triangulations of $\PC$. Then, assuming that an initial vertex $v_0$ of $\PC$ is given, there exists an algorithm that computes the vertices of $\PC$ using  
    \begin{equation*}  
        O\bigl(n \cdot m^2 \cdot \tau\bigr) \quad \text{operations.}  
    \end{equation*}  
    If $\PC$ is simple, the complexity bound improves to  
    \begin{equation*}  
        O\bigl(n^2 \cdot m \cdot \tau \bigr) \quad\text{operations}.  
    \end{equation*}  
\end{lemma}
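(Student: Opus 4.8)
The plan is to describe an incremental (beneath–beyond type) algorithm that maintains, simultaneously, the list of vertices of the current intermediate polyhedron and a triangulation of its normal fan, processing the inequalities of $A x \leq b$ one at a time. Concretely, I would start from the cone $\CCal_0$ obtained by keeping only the $n$ tight inequalities at the given initial vertex $v_0$; this is a simplicial cone with one vertex, and its normal fan (a single full-dimensional simplicial cone together with its faces) is trivially a triangulation. Then, for $t = n+1, \dots, m$, I would add the constraint $A_{t*} x \leq b_t$ to the current polyhedron $\PC^{(t-1)} = \{x : A_{\IC} x \leq b_{\IC}\}$, where $\IC$ is the set of already-processed rows, obtaining $\PC^{(t)}$.

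The key geometric fact I would use is the duality between the beneath–beyond step on $\PC$ and an insertion step in the normal fan: cutting $\PC^{(t-1)}$ by a new halfspace corresponds, in the normal fan picture, to inserting a new ray $r_t = (A_{t*})^\top$ and re-triangulating. Each full-dimensional simplicial cone $\FC$ of the current triangulation $\TS^{(t-1)}$ that "sees" $r_t$ (i.e. whose interior the ray $\cone(r_t)$ intersects, equivalently the dual vertex of $\FC$ is cut off or modified) gets replaced by at most $n$ new simplicial cones, each spanned by $r_t$ together with a facet of $\FC$; cones not seeing $r_t$ are unchanged. On the primal side, I would read off the new vertex/vertices from the local changes: a vertex $v$ of $\PC^{(t-1)}$ survives iff $A_{t*} v \leq b_t$, and each new vertex of $\PC^{(t)}$ lies on an edge of $\PC^{(t-1)}$ crossing the new hyperplane, which is exactly an edge between an adjacent pair of full-dimensional cones in $\TS^{(t-1)}$ separated by the insertion. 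The crucial point for the complexity is that the total number of full-dimensional simplicial cones ever created over the whole run is $O(\tau)$: indeed, once a simplicial cone is destroyed it never reappears, every final cone is counted in $\tau$, and since each insertion of a ray into a cone produces at most $n$ children while the cone count is non-decreasing in a controlled way, a standard amortization (or the observation that the final triangulation alone already has $\le \tau$ cones, and each destroyed cone had been created earlier so total creations are $O(\tau)$ up to the factor hidden in the constant) bounds the work. Each individual cone, when created or when tested against a new ray, requires solving a linear system / evaluating an $n\times n$ determinant sign, which is $O(n^3)$, but after maintaining the current basis inverse incrementally the per-cone-per-constraint update drops: for each of the $m$ constraints we touch each current cone once in $O(n)$ amortized work via rank-one updates of the stored inverse, giving $O(n)$ per (cone, constraint) pair, hence $O(n \cdot m \cdot \tau)$ — but we must also, for each constraint, locate which cones see the new ray, and doing this naively costs another factor $m$ (scanning adjacency / recomputing which of the $\le \tau$ cones are affected using the $O(m)$-sized face descriptions), which accounts for the stated $O(n \cdot m^2 \cdot \tau)$ bound in the general case.

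For the simple case I would exploit that $\PC$ simple means every vertex lies on exactly $n$ facets, so the normal fan is itself simplicial and no triangulation is needed: the dual graph of $\Gamma_n(\NS(\PC))$ is exactly the $1$-skeleton graph $\GC(\PC)$, each vertex has degree exactly $n$, and the incremental insertion of a ray modifies only cones in a connected region whose boundary we can trace by pivoting, spending $O(n)$ per pivot and $O(n)$ per affected cone for the basis-inverse update; summing over the $m$ constraints and the $\le \tau$ cones gives $O(n^2 \cdot m \cdot \tau)$. Here one needs the region of affected cones to be identified without an extra $O(m)$ scan, which is possible because simplicity gives bounded vertex degree and hence the affected region's boundary has size proportional to its interior.

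The main obstacle I anticipate is making the amortized accounting of "total cones ever created is $O(\tau)$" fully rigorous, together with the claim that the online (arbitrary insertion order) triangulation never creates more than $O(\tau)$ cones in total — in general incremental convex-hull constructions the intermediate complexity can exceed the final one, so one must either insert constraints in a favorable order, randomize, or argue via the specific structure (each new cone contains the new ray, so a cone created at step $t$ and surviving to the end is charged to a distinct final cone, while a cone created and later destroyed is charged to the ray whose insertion destroyed it, each ray destroying cones that were themselves created earlier — a potential/charging argument). The secondary technical nuisance is handling non-simple intermediate polyhedra and degenerate insertions (the new hyperplane passing through existing vertices), where "beneath–beyond" needs a consistent tie-breaking (e.g. a symbolic perturbation of $b$) so that at each step the maintained fan is a genuine triangulation; I would phrase this as a generic-position assumption removable by lexicographic perturbation and note that it does not affect the output vertex set of $\PC$.
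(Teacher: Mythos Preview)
Your approach is genuinely different from the paper's, and the obstacle you flag is not a technical nuisance but a real gap.

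The paper does \emph{not} build $\PC$ incrementally. It works only with the final polyhedron: starting from $v_0$, it triangulates the single normal cone $\FC_{v_0}$ (a point configuration of $m_{v_0}$ rows in $\RR^{n-1}$, costing $O\bigl(n\,(m_{v_0}-n+1)^2\,|\TS_{v_0}|\bigr)$ by a cited result). Each simplex of $\TS_{v_0}$ is a feasible basis; its $n$ neighboring bases are listed and each is tested for feasibility in $O(nm)$ using a rank-one update of the stored inverse. Feasible neighbors that land at a new vertex are pushed onto a BFS/DFS queue, and the process repeats there. Summing over all vertices gives $O\bigl(n^2 m\,\tau + n\sum_v (m_v-n+1)^2 |\TS_v|\bigr)$; since $\sum_v |\TS_v|\le\tau$ and $m_v\le m$, the general bound follows, while $m_v=n$ in the simple case kills the second term. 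The key point is that \emph{only} $\NS(\PC)$ is ever triangulated, so the hypothesis on $\tau$ applies verbatim.

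Your amortization does not go through. The hypothesis bounds triangulations of $\NS(\PC)$, but your intermediate objects are triangulations of $\NS(\PC^{(t)})$ for different polyhedra $\PC^{(t)}\supsetneq\PC$, and nothing in the statement controls those. The sentence ``each destroyed cone had been created earlier so total creations are $O(\tau)$'' is circular: total creations equal final survivors plus total destructions, and a single inserted ray can destroy arbitrarily many cones, so destructions are not bounded by $\tau$. This is precisely the familiar phenomenon that intermediate complexity in beneath--beyond can far exceed the output size. Even when $\tau$ comes from the paper's $\Delta/\Delta_{\average}$ bound, that bound depends on $\Delta_{\average}$ of the particular triangulation at hand and need not transfer to the intermediate fans. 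Without an independent argument bounding the total number of cones ever created by $O(\tau)$, your complexity claims do not follow, and I do not see a way to supply one under the bare hypothesis of the lemma.
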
  

\begin{proof}  
    Consider the normal fan $\NS(\PC)$ of $\PC$, which consists of the normal cones $\FC_v$ and their faces for $v \in \vertex(\PC)$ (see \Cref{def:normal_fan}). Clearly, constructing a triangulation for a pointed cone in $\RR^{n}$ is equivalent to constructing a triangulation for a point configuration in $\RR^{n-1}$. A triangulation $\TS$ for a point configuration of $p$ points in $\RR^{n}$ with rank $r < n$ can be computed in at most $O(r \cdot (p-r)^2 \cdot |\TS| )$ operations \cite[Lemma~8.2.2]{TriangBook_DeLoera} (see also \cite{Triangulation_Gruzdev}, where an alternative algorithm is proposed). Thus, we can triangulate $\FC_{v_0}$ in $O\bigl(n \cdot (m_{v_0}-n+1)^2 \cdot \abs{\TS_{v_0}}\bigr)$ operations, where $\TS_{v_0}$ denotes the resulting triangulation and $m_{v_0} = \abs{\FC_{v_0}}$.  

    We identify each simplex $\TC \in \TS_{v_0}$ with the corresponding feasible basis of the system $A x \leq b$. For a fixed $\TC$, we can compute the set of neighboring bases in $O(n)$ operations. Some of these bases are feasible, and can correspond to previously unknown vertices of $\PC$. Given a basis $\BC$ of $A x \leq b$, its feasibility can be verified in $O(n^3 + n \cdot m)$ operations. However, if $\BC$ is a neighboring basis of another basis $\BC'$ for which the inverse or LU-decomposition of $A_{\BC'}$ is known, the cost reduces to $O\bigl(n \cdot (n + m)\bigr) = O(n m)$. After processing $v_0$, we proceed to new unexplored vertices adjacent to $v_0$ using BFS or DFS.  

    Thus, the computational cost relative to $v_0$ is bounded by  
    \begin{equation*}  
        O\left( \bigl(n \cdot (m_{v_0}-n+1)^2 + n^2 \cdot m \bigr) \cdot \abs{\TS_{v_0}} \right) \quad\text{operations.}  
    \end{equation*}  
    Since $\abs{\vertex(\PC)} \leq \tau$, the total computational cost is then bounded by  
    \begin{equation*}  
        O\left( n^2 \cdot m \cdot \tau + n \cdot \sum\limits_{v \in \vertex(\PC)} (m_{v}-n+1)^2 \cdot \abs{\TS_{v}} \right) \quad\text{operations.}  
    \end{equation*}  
    Since $m_{v}-n+1 \leq m$, we obtain the general complexity bound. For simple polyhedra, we have $m_v = n$.  
\end{proof}  

\GribanovAdd{
Now, it is straightforward to prove \Cref{th:vertex_enumeration}, which states our result about the vertex enumeration. The proof combines the algorithm described above (\Cref{lm:convex_hull_complexity}) with our upper bound on the size of a normal fan (\Cref{cor:vol_Delta_fan}). An initial vertex of $\PC$ can be found using the standard ray-casting procedure from a known rational feasible solution of $A x \leq b$, which, in turn, can be found by a polynomial-time linear programming algorithm.
}

\vertexEnumeration*

\subsection{Counting Integer Points in Polytopes}

In this subsection, we establish a new complexity bound for counting integer points in $\Delta$-modular polyhedra. Our result builds upon two key ingredients: (1) the upper bound for the size of a polyhedral fan \Cref{cor:vol_Delta_fan}, and (2) the following theorem from \cite{SparseILP_Gribanov}.
\begin{theorem}[Gribanov~et~al. \cite{SparseILP_Gribanov}]\label{th:general_delta_counting}
    Let $\PC \subseteq \RR^n$ be an $n$-dimensional pointed polytope defined by the system $A x \leq b$, where $A \in \ZZ^{m \times n}$ and $b \in \QQ^m$. Denote $\Delta := \Delta(A)$. Assume additionally that a triangulation $\TS$ of the normal fan $\NC(\PC)$ is given. Then, the number $\abs{\PC \cap \ZZ^n}$ can be computed by an algorithm with the computational complexity bound
    \[
        O\left(n^4 \cdot \Delta \cdot \abs{\TS} \cdot \sum\limits_{\TC \in \TS} \abs{\det A_{\TC}}^2\right) \quad \text{operations}.
    \]
\end{theorem}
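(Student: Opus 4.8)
The plan is to prove this by the standard generating-function scheme — Brion's theorem together with a triangulation of the tangent cones — arranged so that the supplied normal-fan triangulation controls all the arithmetic. First I would form the integer-point transform $f(\PC;z)=\sum_{x\in\PC\cap\ZZ^{n}}z^{x}$ and invoke Brion's theorem: as an identity of rational functions, $f(\PC;z)=\sum_{v\in\vertex(\PC)}f(v+K_{v};z)$, where $K_{v}$ is the tangent cone at $v$, i.e.\ the polar dual of the normal cone $\FC_{v}$ of $\NC(\PC)$. Since $\PC$ is bounded, $f(\PC;z)$ is an honest Laurent polynomial, so $\abs{\PC\cap\ZZ^{n}}$ is obtained by evaluating the (a priori rational) right-hand side at $z\to\BUnit$.

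The triangulation $\TS$ restricts to a triangulation of each normal cone $\FC_{v}$ into simplicial cones $\TC=\cone\bigl((A_{i*})^{\top}:i\in\BC(\TC)\bigr)$, one feasible basis $\BC(\TC)$ per cone, of index $\abs{\det A_{\TC}}\le\Delta$. Dualizing this triangulation — passing through a half-open decomposition so the polar pieces tile $K_{v}$ exactly, with no overlaps and no signs — rewrites each local term as $f(v+K_{v};z)=\sum_{\TC\subseteq\FC_{v}}f\bigl(v(\TC)+\TC^{\circ};z\bigr)$, each $\TC^{\circ}$ a half-open simplicial cone; summing over $v$ gives $f(\PC;z)=\sum_{\TC\in\TS}f\bigl(v(\TC)+\TC^{\circ};z\bigr)$. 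Then I would fix one integer functional $\ell$ with $\langle\ell,u\rangle\ne0$ for every ray generator $u$ occurring — it must avoid only $O(n\abs{\TS})$ hyperplanes — substitute $z=\exp(t\ell)$, and read $\abs{\PC\cap\ZZ^{n}}$ off as the coefficient of $t^{0}$ in $\sum_{\TC}f\bigl(v(\TC)+\TC^{\circ};\exp(t\ell)\bigr)$, the poles of order $\le n$ cancelling across the sum.

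The crux is evaluating one local term $f(v(\TC)+\TC^{\circ};z)$ within $\poly(n)\cdot\abs{\det A_{\TC}}^{2}$ operations. Writing the membership condition of the primal cone as $A_{\BC}x\le(\text{rhs})$ and substituting $y=A_{\BC}x$ recasts the count as one over the sublattice $A_{\BC}\ZZ^{n}$ of $\ZZ^{n}$, whose index is $\abs{\det A_{\TC}}\le\Delta$; reducing modulo this lattice reduces the local contribution to a computation over the finite abelian group $\ZZ^{n}/A_{\BC}^{\top}\ZZ^{n}$ of order $\abs{\det A_{\TC}}$ — the classical ``group relaxation''. This can be organized as a short rational function (or a dynamic program) over the $\abs{\det A_{\TC}}$ group elements at $\poly(n)$ cost per element, i.e.\ $\poly(n)\cdot\abs{\det A_{\TC}}^{2}$ per cone. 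Combining this over $\TC\in\TS$ with the $O(n^{4})$ linear algebra on each $A_{\BC}$ (inverse, adjugate, Smith/Hermite normal form) and with the bookkeeping of assembling the $\abs{\TS}$ local rational functions and carrying the rational apices $v(\TC)$ through — a step that in the worst case is what forces the extra $\Delta$ and $\abs{\TS}$ factors — gives the stated bound $O\bigl(n^{4}\cdot\Delta\cdot\abs{\TS}\cdot\sum_{\TC\in\TS}\abs{\det A_{\TC}}^{2}\bigr)$.

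I expect the per-cone step to be the main obstacle. The naive route — building $\TC^{\circ}$ from its primitive integer generators and listing its fundamental parallelepiped — fails, because $\TC^{\circ}$ generically has index $\Theta(\Delta^{n-1})$ rather than $\Theta(\Delta)$; the essential observation is that the count can instead be routed through the index-$\abs{\det A_{\TC}}$ lattice $A_{\BC}\ZZ^{n}$, where the relevant quotient has order only $\abs{\det A_{\TC}}\le\Delta$. Making this reduction precise — a Barvinok-type decomposition whose number of generated unimodular cones is polynomial in $\abs{\det A_{\TC}}$, or the equivalent group-relaxation dynamic program — together with checking that the half-open dualization of the given triangulation yields exactly $\abs{\TS}$ sign-free simplicial cones, and correctly carrying the non-integer apices through the generating functions, is where the real work lies; the surrounding complexity accounting is routine.
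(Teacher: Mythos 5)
This statement is not proved in the paper at all: it is an external ingredient, quoted verbatim from Gribanov~et~al.~\cite{SparseILP_Gribanov}, so there is no in-paper argument to compare yours against. Judged on its own, your sketch is in the right family of techniques (Brion-type identities localized at the vertices, the given normal-fan triangulation supplying one simplicial cone of index $\abs{\det A_{\TC}}$ per basis, and a group/Smith-normal-form reduction so that the per-cone work scales with $\abs{\det A_{\TC}}$ rather than with the index of the primal tangent cone), but it has two concrete gaps.

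First, the dualization step is wrong as stated. If $\TS$ triangulates the normal cone $\FC_v$, the polar cones $\TC^{\circ}=\{x\colon A_{\TC}x\leq 0\}$ do \emph{not} tile the tangent cone $K_v$ --- polarity reverses inclusions, so each $\TC^{\circ}$ \emph{contains} $K_v$, and no half-open trick turns this into an exact sign-free partition of $K_v$. The correct statement is the valuation identity (Lawrence/Brion duality): the indicator of $K_v$ equals $\sum_{\TC\subseteq\FC_v}[\TC^{\circ}]$ modulo indicators of cones containing lines, which suffices only because such cones contribute zero rational functions; your argument needs this identity, not a tiling. Second, the complexity accounting is reverse-engineered rather than derived: you establish (plausibly) a cost of $\poly(n)\cdot\abs{\det A_{\TC}}^{2}$ per cone, which would give $\poly(n)\cdot\sum_{\TC}\abs{\det A_{\TC}}^{2}$ in total, and then attribute the remaining multiplicative factor $\Delta\cdot\abs{\TS}$ to ``bookkeeping'' of apices and assembly ``in the worst case.'' That factor is exactly where the cited proof does real work (the constant-term/series evaluation when specializing $z=\exp(t\ell)$ to order $n$ across all $\abs{\TS}$ local terms, with series lengths governed by $\Delta$), and asserting it does not prove the stated bound. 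As written, the proposal would prove a bound of the form $\poly(n)\cdot\sum_{\TC}\abs{\det A_{\TC}}^{2}$ \emph{if} the per-cone and cancellation steps were fully justified, but it does not yet justify them, nor the precise bound in the statement.
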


\begin{remark}
    We note that the complexity bound of \Cref{th:general_delta_counting} was recently improved in the preprint \cite{HyperAvoiding_NonHomo} to
    \begin{equation*}
        O\left(n^3 \cdot \Delta \cdot \abs{\TS} \cdot \sum_{\TC \in \TS} \abs{\det A_{\TC}}^2\right) \quad \text{operations}.
    \end{equation*}
    Moreover, the authors indicate that future work will establish a refined bound of the form
    \begin{equation*}
        n^2 \cdot \Delta \cdot \abs{\TS} \cdot \sum_{\TC \in \TS} \abs{\det A_{\TC}}^2 \cdot \poly(n, \log \Delta) \quad \text{operations}.
    \end{equation*}
\end{remark}

Additionally, we will use the following elementary technical lemma. The proof is given in \Vref{proof:lm:knapsack_ineq}.
\begin{restatable}{lemma}{knapsackInequality}
\label{lm:knapsack_ineq}
    Let $x \in [0,\alpha]^n$ with $\norm{x}_1 \leq \beta$. Then, for any nondecreasing convex function $f \colon \RR_{\geq 0} \to \RR$ satisfying $f(0) = 0$,
    \begin{equation*}
        \sum_{i=1}^n f(x_i) \leq \left\lfloor\frac{\beta}{\alpha}+1\right\rfloor \cdot f(\alpha). 
    \end{equation*}
\end{restatable}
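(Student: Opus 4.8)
The plan is to reduce to the extremal configuration of $x$ and then apply convexity. First I would observe that the quantity $\sum_{i=1}^n f(x_i)$, viewed as a function of $x$ on the polytope $\{x \in [0,\alpha]^n : \norm{x}_1 \leq \beta\}$, is convex (a sum of convex functions of the individual coordinates), so its maximum is attained at a vertex of this polytope. A vertex of $[0,\alpha]^n \cap \{\norm{x}_1 \leq \beta\}$ has all coordinates equal to $0$ or $\alpha$ except possibly one coordinate lying strictly between; moreover the $\ell_1$-constraint is then tight unless $\beta \geq n\alpha$. So it suffices to bound $\sum f(x_i)$ for such configurations, where at most $k := \lfloor \beta/\alpha \rfloor$ coordinates equal $\alpha$, at most one coordinate equals the residual $\beta - k\alpha \in [0,\alpha)$, and the rest are $0$.

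The second step is the pointwise estimate coming from convexity and $f(0)=0$: for $0 \le t \le \alpha$ we have $f(t) = f\bigl((1-t/\alpha)\cdot 0 + (t/\alpha)\cdot\alpha\bigr) \le (t/\alpha) f(\alpha)$, i.e. $f$ is subadditive-to-linear below $\alpha$ in the sense $f(t) \le \tfrac{t}{\alpha} f(\alpha)$. Applying this to the single fractional coordinate $t = \beta - k\alpha$ gives $f(\beta - k\alpha) \le \tfrac{\beta - k\alpha}{\alpha} f(\alpha) \le f(\alpha)$ (since $\beta - k\alpha < \alpha$ and $f \ge 0$ as $f$ is nondecreasing with $f(0)=0$). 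Combining, the total is at most $k\,f(\alpha) + f(\alpha) = (k+1) f(\alpha) = \bigl(\lfloor \beta/\alpha\rfloor + 1\bigr) f(\alpha)$, which is exactly the claimed bound; and in the degenerate case $\beta \ge n\alpha$ we trivially get $\sum f(x_i) \le n f(\alpha) \le \lfloor\beta/\alpha + 1\rfloor f(\alpha)$.

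There is essentially no hard obstacle here — this is a routine convexity argument. The one point requiring mild care is the structural description of the vertices of the truncated cube $[0,\alpha]^n \cap \{\norm{x}_1 \le \beta\}$, to justify that at most one coordinate is fractional; this follows from the fact that a vertex must make $n$ linearly independent facet inequalities tight, and the facets are the $2n$ box constraints plus the single knapsack constraint, so at least $n-1$ box constraints are tight. Alternatively, one can avoid the polytope-vertex language entirely: if two coordinates $x_i, x_j$ both lie in $(0,\alpha)$, then by convexity of $f$ we may push one of them toward $0$ and the other toward $\alpha$ while keeping $x_i + x_j$ fixed, without decreasing $\sum f$; iterating this exchange argument terminates at a configuration with at most one fractional coordinate, after which the estimate above applies. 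I would present the exchange-argument version as it is self-contained and elementary.
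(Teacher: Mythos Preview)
Your proposal is correct and follows essentially the same route as the paper: both argue that the convex function $\sum_i f(x_i)$ attains its maximum at a vertex of the truncated cube $[0,\alpha]^n \cap \{\norm{x}_1 \le \beta\}$, classify those vertices as having at most one fractional coordinate, and then bound the fractional term by $f(\alpha)$ via monotonicity. Your write-up is in fact a bit more careful than the paper's (you justify the vertex structure via the facet count) and you additionally offer the self-contained exchange argument, which the paper does not include.
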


Our main result for integer solutions counting is the following.

\integerCounting*

\begin{proof}
    Let $\TS$ be a given triangulation of the normal fan $\NS(\PC)$. By \Cref{th:general_delta_counting}, the number $\abs{\PC \cap \ZZ^n}$ can be computed in
    \[
        O\left( n^4 \cdot \Delta \cdot \abs{\TS} \cdot \sum_{\TC \in \TS} \abs{\det A_{\TC}}^2 \right)
    \]
    arithmetic operations. \Cref{cor:vol_Delta_fan} yields the inequalities:
    \begin{gather*}
         \sum_{\TC \in \TS} \abs{\det A_{\TC}} \leq n! \cdot \Delta \cdot \vol \BB_2 = O(n)^{n/2} \cdot \Delta, \quad\text{and}\\
         \abs{\TS} \leq n! \cdot \frac{\Delta}{\Delta_{\average}} \cdot \vol \BB_2 = O(n)^{n/2} \cdot \frac{\Delta}{\Delta_{\average}}.
    \end{gather*}
    Applying \Cref{lm:knapsack_ineq}, we obtain
    \[
        \sum_{\TC \in \TS} \abs{\det A_{\TC}}^2 = O(n)^{n/2} \cdot \Delta^2,
    \]
    which establishes the claimed complexity of the counting algorithm. 
    
    \Cref{lm:convex_hull_complexity} guarantees that a triangulation of $\PC$'s normal fan can be constructed in 
    \[
        O(n)^{n/2} \cdot m^2 \cdot \frac{\Delta}{\Delta_{\average}} + T_{\text{LP}}
    \] 
    operations. By Averkov and Schymura \cite{DiffColumnsOther}, we may assume \( m = n^{O(1)} \cdot \Delta \), so the \( m^2 \) term is dominated by the asymptotic complexity. 
    By Dadush et al. \cite{TardosRevisiting_Dadush}, \( T_{\text{LP}} = n^{O(1)} \cdot m \cdot \log (1+\Delta) \). Since \( \frac{\Delta}{\Delta_{\average}} \geq 1 \), the term \( T_{\text{LP}} \) is also asymptotically negligible.
\end{proof}

\section*{Acknowledgment}
The results described in \Cref{sec:vertex_UB}, \Cref{sec:diam_UB} and \Cref{sec:algo} were supported by the grant of the Russian Science Foundation (RScF) No. 24-71-10021. The results of \Cref{sec:LB} were prepared within the framework of the Basic Research Program at the National Research University Higher School of Economics (HSE). The work of Zsolt L\'angi are partially supported by the ERC Advanced Grant “ERMiD”, and the NKFIH grant K14754.


We would like to thank the organizers and participants of the Workshop on Open Problems in Combinatorics and Geometry IV for their engagement in discussions on this research.

\begin{appendix}
    \section{Omitted Proofs}

    \subsection{Proof of \Cref{lm:subdiv}}\label{proof:lm:subdiv}

    We first establish a preliminary result. Let \(\{\SC_k\}_{k \geq 0}\) be a sequence of \(d\)-dimensional simplices in \(\RR^n\) with barycenters \(\{b_k\}\), where each subsequent simplex \(\SC_{k+1}\) is obtained from \(\SC_k\) by replacing one vertex with \(\SC_k\)'s barycenter \(b_k\).

    \begin{definition}
        A vertex set \(\UC\) is called a \emph{stable subset} of the sequence \(\{\SC_k\}_{k \geq 0}\) if every \(v \in \UC\) remains a vertex in all simplices \(\SC_k\) of the sequence.
    \end{definition}
    
    \begin{lemma}\label{lm:stable_vertices}
        Let $\UC$ be the maximum stable subset of vertices of $\{\SC_k\}_{k \geq 0}$ (i.e., there are no other stable vertices in the sequence). Denote $b^* = \frac{1}{\abs{\UC}} \sum_{v \in \UC} v$, then
        \begin{equation*}
            \lim\limits_{k \to \infty} b_k = b^*.
        \end{equation*}
    \end{lemma}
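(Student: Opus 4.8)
The plan is to track how the barycenter evolves under one step of the operation. Suppose $\SC_k$ has vertices $w_1, \dots, w_{d+1}$ and we replace the vertex $w_{j}$ (for some index $j = j(k)$) by the barycenter $b_k = \frac{1}{d+1}\sum_{i} w_i$. Then the new vertex set is $\{w_1, \dots, w_{j-1}, b_k, w_{j+1}, \dots, w_{d+1}\}$, and the new barycenter is
\begin{equation*}
    b_{k+1} = b_k + \frac{1}{d+1}(b_k - w_{j}) = \frac{d+2}{d+1} b_k - \frac{1}{d+1} w_{j(k)}.
\end{equation*}
So $b_{k+1} - b_k = \frac{1}{d+1}(b_k - w_{j(k)})$, where $w_{j(k)}$ is the vertex being discarded at step $k$. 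First I would observe that a vertex gets discarded at most once: once removed it never returns. Hence the discarded vertices $w_{j(0)}, w_{j(1)}, \dots$ are pairwise distinct, and any vertex that is never discarded is (by definition) a stable vertex, i.e. lies in $\UC$. Since the total number of vertices appearing in the whole sequence could be infinite, the key point to extract is rather that, for every index $i$, the $i$-th coordinate (in barycentric-type coordinates) stabilizes.

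**The convex-combination viewpoint.** The cleaner approach is to write each simplex's vertex set in terms of the stable vertices and the barycenters that have been inserted. Every vertex $w$ of $\SC_k$ is an affine (in fact convex) combination of the vertices of $\SC_0$; track the coefficient vector. Each step replaces one coefficient vector by the average of all $d+1$ current ones. Let me instead argue directly with the recursion $b_{k+1} = \frac{d+2}{d+1}b_k - \frac{1}{d+1}w_{j(k)}$. Telescoping, or better, set $c_k = b_k - b^*$ where $b^* = \frac{1}{|\UC|}\sum_{v\in\UC} v$. The hard part — and the main obstacle — is controlling the "new" vertices $b_k$ when they themselves get discarded later: $w_{j(k)}$ need not be one of the original vertices. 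So I would prove by strong induction that every vertex of $\SC_k$ can be written as $b^* + O(\rho^{k})$ for a suitable $\rho < 1$, or more carefully, that $\operatorname{diam}(\SC_k) \to 0$ and every vertex of $\SC_k$ lies within distance $o(1)$ of $\affh(\UC)$-related point — actually the crispest route is:

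\textbf{Step 1: the simplices shrink.} Show $\operatorname{diam}(\SC_{k+1}) \le \frac{d}{d+1}\operatorname{diam}(\SC_k)$ or at least $\operatorname{diam}(\SC_k)\to 0$. Indeed replacing $w_j$ by $b_k$ moves it strictly inside, and $\|b_k - w_i\|\le \frac{d}{d+1}\operatorname{diam}(\SC_k)$ for every $i$ since $b_k - w_i = \frac{1}{d+1}\sum_{\ell\ne i}(w_\ell - w_i)$. New pairwise distances not involving $b_k$ are old distances though — so a single step need not contract the diameter. But over $d+1$ consecutive steps every original vertex of $\SC_k$ gets replaced or else becomes stable; I would formalize: if infinitely many steps occur, eventually all non-stable vertices present at any given time get flushed out, forcing $\operatorname{diam}(\SC_k)\to 0$ unless all but possibly the stable ones... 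Actually the clean statement: let $r_k = \max_{i}\|w_i^{(k)} - b^*\|$ over the non-stable vertices of $\SC_k$ (and $0$ if there are none). Then a new vertex $b_k$ satisfies $\|b_k - b^*\| \le \frac{1}{d+1}\sum_i \|w_i^{(k)} - b^*\| \le \frac{d - |\UC| + 1}{d+1} r_k + \frac{|\UC|}{d+1}\cdot 0$ wait, need $\|v - b^*\|$ small for $v\in\UC$ too — it isn't, $b^*$ is the centroid of $\UC$ but individual stable vertices are far.

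\textbf{Revised Step 1.} Use affine coordinates with respect to $\SC_0 = \{u_0,\dots,u_d\}$: write each current vertex as $\sum \lambda_i u_i$, $\sum\lambda_i = 1$. The barycenter operation averages $d+1$ such coordinate vectors and replaces one of them. A coordinate index $i$ is "stable" iff $u_i \in \UC$; for such $i$ there is always exactly one current vertex with $\lambda_i$-coordinate $=1$ (namely $u_i$ itself) and all others have $\lambda_i$-coordinate... no, subdivisions mix them. Hmm. The genuinely clean invariant: consider $S_k = $ set of coordinate vectors (in $\RR^{d+1}$, summing to $1$) of the $d+1$ vertices of $\SC_k$. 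One step: pick the average $\bar\lambda_k = \frac{1}{d+1}\sum_{\lambda\in S_k}\lambda$, remove one element, insert $\bar\lambda_k$.

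\textbf{Step 2: the average converges.} The average $\bar\lambda_k$ of $S_k$ evolves by $\bar\lambda_{k+1} = \bar\lambda_k + \frac{1}{d+1}(\bar\lambda_k - \mu_k)$ where $\mu_k$ is the removed element, and $b_k = \sum (\bar\lambda_k)_i u_i$, so $b_k$ and $\bar\lambda_k$ are in affine correspondence; it suffices to show $\bar\lambda_k$ converges to the coordinate vector of $b^*$, i.e. to $\frac{1}{|\UC|}\mathbf{1}_{\UC}$. Here $\mathbf{1}_{\UC}$ is the indicator of stable indices. Now a removed element $\mu_k$ is either a later-reinstated barycenter coordinate or an original $e_i$ with $u_i\notin\UC$; in all cases $\mu_k$ has its $\UC$-restriction "small" in a sense I'd make precise by the shrinking argument. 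I expect the main obstacle to be exactly this: proving $\operatorname{diam}(S_k)\to 0$ in the $\RR^{d+1}$ picture, from which $\bar\lambda_{k+1} - \bar\lambda_k \to 0$ with summable-enough behavior is not automatic, so instead I'd show $S_k$ Cauchy (nested, shrinking) and identify the limit point: all of $S_k$ converges to a single point $\lambda^\infty$, every $e_i$ with $u_i\in\UC$ stays in the convex hull of $S_k$ forever (that's what "stable" gives — $u_i$ is literally always one of the $d+1$ vertices), hence $\lambda^\infty$'s support is contained in... no — $e_i\in S_k$ for all $k$ while $\operatorname{diam} S_k\to 0$ would force $\lambda^\infty = e_i$, contradiction if $|\UC|\ge 2$! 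So $\operatorname{diam}(S_k)\not\to 0$ in general; only $\bar\lambda_k$ converges. The resolution: decompose $S_k$ into the $|\UC|$ persistent elements $\{e_i: u_i\in\UC\}$ and the $d+1-|\UC|$ "transient" ones; show the transient coordinate vectors converge to $\frac{1}{|\UC|}\mathbf 1_{\UC}$ and the persistent ones contribute $\sum_{u_i\in\UC} e_i = \mathbf 1_\UC$, giving $\bar\lambda_k\to \frac{1}{d+1}(\mathbf 1_\UC + (d+1-|\UC|)\cdot\frac{1}{|\UC|}\mathbf 1_\UC) = \frac{1}{|\UC|}\mathbf 1_\UC$. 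Proving the transient ones converge is the crux: every transient vertex gets replaced infinitely often by a fresh barycenter, and a fresh barycenter's $\mathbf 1_\UC$-complement component is a convex average that contracts. I would make this rigorous via a potential function $\Phi_k = \sum_{\lambda\in S_k}\|\lambda_{\bar\UC}\|$ (sum over transient-support components) and show $\Phi_{k+1}\le \Phi_k$ with strict decrease along a subsequence, then conclude $\bar\lambda_k\to\frac{1}{|\UC|}\mathbf 1_\UC$, i.e. $b_k\to b^*$.

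To summarize the plan concisely: (i) set up affine coordinates and the averaging recursion for the vertex-coordinate multiset $S_k$; (ii) note the $|\UC|$ stable vertices remain in $S_k$ forever while all other slots are refilled by barycenters infinitely often; (iii) introduce a potential measuring the "transient mass" $\Phi_k$ and show it tends to $0$ using that each refill is a convex averaging over $d+1$ vectors including at least one stable $e_i$; (iv) deduce $\bar\lambda_k \to \frac{1}{|\UC|}\mathbf 1_\UC$, equivalently $b_k\to b^*$. The main obstacle, as indicated, is step (iii): one must rule out the transient vertices conspiring to keep mass away from the stable simplex indefinitely, and the non-contraction of $\operatorname{diam}(S_k)$ means one cannot simply invoke a shrinking-diameter argument — a carefully chosen potential with a quantitative decrease (driven by the $\frac{1}{d+1}$ averaging weight on stable indices) is needed.
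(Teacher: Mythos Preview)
Your approach differs fundamentally from the paper's. The paper argues by induction on the dimension $d$: it writes $b_k = \frac{1}{d+1}\sum_{\FC} b_{k,\FC}$ as the average of the $d+1$ facet barycenters of $\SC_k$, invokes the inductive hypothesis on each facet sequence to obtain $\lim_k b_{k,\FC} = b^*_{\FC\cap\UC}$, and then averages these limits via an elementary count of which facets contain which stable vertices. No potential function, no slot bookkeeping, no contraction estimate.

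Your plan has a genuine gap at step~(ii). You write ``any vertex that is never discarded is (by definition) a stable vertex'' and from this infer that every non-stable slot is refilled infinitely often. But the paper's definition requires a stable vertex to belong to \emph{every} $\SC_k$, including $\SC_0$; a barycenter inserted at some step and never subsequently removed is not in $\UC$, yet its slot is refilled only finitely often. For $d=2$, take $\SC_0=\{u_1,u_2,u_3\}$, remove $u_2$ at step~$0$, remove $u_3$ at step~$1$, and thereafter always remove the vertex currently in slot~$3$: then $\UC=\{u_1\}$, the point $b_0$ sits permanently in slot~$2$, and one computes $b_k\to\frac{1}{2}(u_1+b_0)\neq u_1=b^*$. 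So step~(ii) fails, the decomposition underlying (iii) and~(iv) collapses, and indeed the example shows the statement is sensitive to the literal reading of ``stable''; any correct argument must first pass to a tail of the sequence where all eventually-permanent vertices are already present (equivalently, enlarge $\UC$ to include them). Even after that repair, step~(iii) --- which you rightly flag as the crux --- remains only an intention: no concrete $\Phi_k$ is specified and no quantitative contraction is proved, and handling an arbitrary refresh schedule on the transient slots is exactly the difficulty that the paper's dimension induction is meant to avoid.
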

    \begin{proof}
        We proceed by induction on $d$. For the base case $d=1$, it suffices to consider $\UC = \{u\}$, where $u$ is the only stable vertex. In this scenario, all subsequent simplices satisfy $\SC_k = [b_{k-1}, u]$ for $k \geq 1$. Since each iteration halves the length of $[b_{k-1}, u]$, the barycenters $b_k$ converge to $u$.
    
        Assume the lemma holds for $(d-1)$-dimensional simplices. The barycenter $b_k$ of $\SC_k$ decomposes into a weighted average of facet barycenters:
        \begin{equation*}
            b_k = \frac{1}{d+1} \sum_{\FC} b_{k,\FC},
        \end{equation*}
        where $b_{k,\FC}$ denotes the barycenter of facet $\FC$ of $\SC_k$. For any subset $\MC \subseteq \UC$, define $b^*_{\MC} = \frac{1}{|\MC|} \sum_{v \in \MC} v$, and identify faces $\FC$ with their vertex sets. 
        
        By the induction hypothesis, barycenters of proper faces converge to their stable subsets' barycenters:
        \begin{equation*}
            \lim_{k \to \infty} b_{k,\FC} = b^*_{\FC \cap \UC}.
        \end{equation*}
        
        The facets of $\SC_k$ divide into two classes:
        \begin{itemize}
            \item $d+1-|\UC|$ facets containing all stable vertices $\UC$,
            \item $|\UC|$ facets omitting exactly one stable vertex.
        \end{itemize}
        Denote $\UC^{c} = \conv(\UC)$. The limit computation follows:
        \begin{multline*}
            \lim_{k \to \infty} b_k = \frac{1}{d+1} \sum_{\FC} b^*_{\FC \cap \UC} =\\
            = \frac{1}{d+1} \left( (d+1-|\UC|) \cdot b^*_{\UC} + \sum_{\substack{\GC \text{ - facet} \\ \text{of } \UC^{c}}} b^*_{\GC} \right) =\\
            = \frac{1}{d+1} \left( (d+1-|\UC|) + |\UC| \right) \cdot b^* = b^*,
        \end{multline*}
        where in the middle equality we have used that $b^* = b^*_{\UC} = \frac{1}{\abs{\UC}} \sum\limits_{\GC} b^*_{\GC}$, for $\GC$ running through the facets of $\UC^{c}$. 
    \end{proof}

    Now we ready to prove \Cref{lm:subdiv}.

    \subdivisionDensity*
    
    \begin{proof}[\Cref{lm:subdiv}]
        Suppose that there exists a point $x \in \SC$ whose $\varepsilon$-neighborhood contains no points from $\VC_{\infty}$. Then there exists $x'$ in the $\frac{\varepsilon}{2}$-neighborhood of $x$ that lies outside all lower-dimensional simplices at any finite step. This follows because a ball cannot be covered by countably many hyperplanes. 

        For each $k \geq 0$, let $\SC'_k$ be the unique simplex containing $x'$ at step $k$. We claim that, for every $k_0 \geq 0$ and every vertex $u$ of $\SC'_{k_0}$, there exists $k(u) > k_0$ such that $\SC'_{k(u)}$ no longer contains $u$ as a vertex. Otherwise, $\SC'_{k_0}$ would admit a stable vertex subset $\UC$ of maximum cardinality. Following \Cref{lm:stable_vertices}, denote $b^* = \frac{1}{|\UC|} \sum_{v \in \UC} v$. 
        Since $\norm{x'- b^*}_2 > 0$, \Cref{lm:stable_vertices} implies there exists $k$ such that for all $v \in S'_k \setminus \UC$, we have $\norm{v-b^*}_2 < \norm{x' - b^*}_2$. This contradicts $x' \in S'_k$ for any $k \geq 0$, and proves the claim.

        Consequently, after finitely many steps, the diameter of $\SC'_k$ halves, yielding:
        \[
        \lim_{k \to \infty} \diam \SC'_k = 0.
        \]
        Let $k_1$ be the first step where $\diam \SC'_{k_1} < \frac{\varepsilon}{2}$. Then, some point of $\VC^{k_1}$ lies in the $\varepsilon$-neighborhood of $x$, which contradicts our initial assumption.   
    \end{proof}

    \subsection{Proof of \Cref{lm:knapsack_ineq}}\label{proof:lm:knapsack_ineq}

    \knapsackInequality*
    
    \begin{proof}[\Cref{lm:knapsack_ineq}]
        Consider the polytope $\PC = \bigl\{x \in \RR^n \colon x \in [0,\alpha]^n,\, \norm{x}_1 \leq \beta\bigr\}$. Since $F(x) = \sum_{i=1}^n f(x_i)$ is convex, its maximum is attained at a vertex of $\PC$. The vertices of $\PC$ are of two types:
    
    \begin{enumerate}
        \item Vertices coinciding with vertices $v$ of $[0,\alpha]^n$: 
        Since $v \in \PC$, the number of nonzeroes in $v$ is bounded by $\lfloor \beta/\alpha \rfloor$, yielding $F(v) \leq \lfloor \beta/\alpha \rfloor \cdot f(\alpha)$.
        
        \item Intersection points of edges of $[0,\alpha]^n$ with the hyperplane $\sum_{i=1}^n x_i = \beta$:
        Without loss of generality, such vertices have the form
        \begin{equation*}
            v^\top = (\underbrace{\alpha,\dots,\alpha}_{s},\underbrace{0,\dots,0}_{t},x_n),
        \end{equation*}
        where $s+t = n-1$ and $x_n = \beta - s\alpha$. By the monotonicity of $f$,
        \begin{equation*}
            F(v) = s f(\alpha) + f(x_n) \leq \left\lfloor\frac{\beta}{\alpha}+1\right\rfloor \cdot f(\alpha).
        \end{equation*}
    \end{enumerate}
    \end{proof}
\end{appendix}

\bibliographystyle{splncs04}
\bibliography{grib_biblio}

\begin{thebibliography}{10}
\providecommand{\url}[1]{\texttt{#1}}
\providecommand{\urlprefix}{URL }
\providecommand{\doi}[1]{https://doi.org/#1}

\bibitem{AZ}
Alekseev, V., E., Zakharova, D., V.: Independent sets in the graphs with bounded minors of the extended incidence matrix. Journal of Applied and Industrial Mathematics  \textbf{5}(1),  14--18 (2011). \doi{10.1134/S1990478911010029}

\bibitem{BimodularStrong}
Artmann, S., Weismantel, R., Zenklusen, R.: A strongly polynomial algorithm for bimodular integer linear programming. In: Proceedings of the 49th Annual ACM SIGACT Symposium on Theory of Computing. pp. 1206--1219. STOC 2017, Association for Computing Machinery, New York, NY, USA (2017). \doi{10.1145/3055399.3055473}

\bibitem{DiffColumnsOther}
Averkov, G., Schymura, M.: On the maximal number of columns of $\delta$-modular matrix. In: International Conference on Integer Programming and Combinatorial Optimization. pp. 29--42. Springer (2022)

\bibitem{Diam_Barnette_1974}
Barnette, D.: An upper bound for the diameter of a polytope. Discrete Mathematics  \textbf{10}(1),  9--13 (1974)

\bibitem{BarvBook}
Barvinok, A.: Integer Points in Polyhedra. European Mathematical Society, ETH-Zentrum, Z{\"u}rich, Switzerland (2008)

\bibitem{BarvPom}
Barvinok, A., Pommersheim, J.: An algorithmic theory of lattice points in polyhedra. New Perspect. Algebraic Combin.  \textbf{38} (1999)

\bibitem{StableSetHardness}
Bock, A., Faenza, Y., Moldenhauer, C., Ruiz-Vargas, A.J.: {Solving the Stable Set Problem in Terms of the Odd Cycle Packing Number}. In: Raman, V., Suresh, S.P. (eds.) 34th International Conference on Foundation of Software Technology and Theoretical Computer Science (FSTTCS 2014). Leibniz International Proceedings in Informatics (LIPIcs), vol.~29, pp. 187--198. Schloss Dagstuhl--Leibniz-Zentrum fuer Informatik, Dagstuhl, Germany (2014). \doi{10.4230/LIPIcs.FSTTCS.2014.187}

\bibitem{SubdeterminantsDiameter}
Bonifas, N., Di~Summa, M., Eisenbrand, F., H\"ahnle, N., Niemeier, M.: On sub-determinants and the diameter of polyhedra. Discrete \& Computational Geometry  \textbf{52},  102--115 (2014). \doi{10.1007/s00454-014-9601-x}, \url{https://doi.org/10.1007/s00454-014-9601-x}

\bibitem{CircuitDiam_EdgesVsCircuits}
Borgwardt, S., De~Loera, J.A., Finhold, E.: Edges versus circuits: a hierarchy of diameters in polyhedra. Advances in Geometry  \textbf{16}(4),  511--530 (2016)

\bibitem{CircuitDiam_OnConjecture}
Borgwardt, S., Stephen, T., Yusun, T.: On the circuit diameter conjecture. Discrete \& Computational Geometry  \textbf{60},  558--587 (2018)

\bibitem{EnumerationOptimizationOverCircuits}
Borgwardt, S., Viss, C.: A polyhedral model for enumeration and optimization over the set of circuits. Discrete Applied Mathematics  \textbf{308},  68--83 (2022)

\bibitem{ShadowSimplexForCurved}
Dadush, D., H{\"a}hnle, N.: On the shadow simplex method for curved polyhedra. Discrete \& Computational Geometry  \textbf{56},  882--909 (2016)

\bibitem{CircuitDiam_ViaCircuitImbalances}
Dadush, D., Koh, Z.K., Natura, B., V{\'e}gh, L.A.: On circuit diameter bounds via circuit imbalances. In: International Conference on Integer Programming and Combinatorial Optimization. pp. 140--153. Springer (2022)

\bibitem{TardosRevisiting_Dadush}
Dadush, D., Natura, B., V{\'e}gh, L.A.: Revisiting tardos's framework for linear programming: faster exact solutions using approximate solvers. In: 2020 IEEE 61st Annual Symposium on Foundations of Computer Science (FOCS). pp. 931--942. IEEE (2020)

\bibitem{HyperAvoiding_NonHomo}
Dakhno, G., Gribanov, D., Kasianov, N., Kats, A., Kupavskii, A., Kuz'min, N.: Hyperplanes avoiding problem and integer points counting in polyhedra. arXiv preprint arXiv:2411.07030  (2024)

\bibitem{TriangBook_DeLoera}
De~Loera, J., Rambau, J., Santos, F.: Triangulations: structures for algorithms and applications. Springer-Verlag, Berlin Heidelberg (2010)

\bibitem{Circuit_AugmentationInLP}
De~Loera, J.A., Kafer, S., Sanit{\`a}, L.: Pivot rules for circuit-augmentation algorithms in linear optimization. SIAM Journal on Optimization  \textbf{32}(3),  2156--2179 (2022)

\bibitem{Diameter_Zonotopes_part1}
Deza, A., Manoussakis, G., Onn, S.: Primitive zonotopes. Discrete \& Computational Geometry  \textbf{60},  27--39 (2018)

\bibitem{Diameter_Zonotopes_part2}
Deza, A., Pournin, L.: Improved bounds on the diameter of lattice polytopes. Acta Mathematica Hungarica  \textbf{154},  457--469 (2018)

\bibitem{Diameter_Zonotopes_part4}
Deza, A., Pournin, L.: Primitive point packing. Mathematika  \textbf{68}(3),  979--1007 (2022)

\bibitem{Diameter_Zonotopes_part3}
Deza, A., Pournin, L., Sukegawa, N.: The diameter of lattice zonotopes. Proceedings of the American Mathematical Society  \textbf{148}(8),  3507--3516 (2020)

\bibitem{TwoNonZerosStrong}
Fiorini, S., Joret, G., Weltge, S., Yuditsky, Y.: Integer programs with bounded subdeterminants and two nonzeros per row. In: 2021 IEEE 62nd Annual Symposium on Foundations of Computer Science (FOCS). pp. 13--24. IEEE (2022)

\bibitem{Gribanov_fixed_codim}
Gribanov, D., Malyshev, D., Pardalos, P.M.: Delta-modular ilp problems of bounded co-dimension, discrepancy, and convolution. arXiv preprint arXiv:2405.17001  (2024)

\bibitem{SimplexEquiv_Gribanov}
Gribanov, D.: Enumeration and unimodular equivalence of empty delta-modular simplices. In: International Conference on Mathematical Optimization Theory and Operations Research. pp. 115--132. Springer (2023)

\bibitem{SparseILP_Gribanov}
Gribanov, D., Shumilov, I., Malyshev, D., Zolotykh, N.: Faster algorithms for sparse ilp and hypergraph multi-packing/multi-cover problems. Journal of Global Optimization pp. 1--35 (2024)

\bibitem{Parametric_Counting_Grib}
Gribanov, D.V., Malyshev, D.S., Pardalos, P.M., Zolotykh, N.Y.: A new and faster representation for counting integer points in parametric polyhedra. Computational Optimization and Applications pp. 1--51 (2024)

\bibitem{WidthConv_Grib}
Gribanov, D., V., Malyshev, D., S., Veselov, S., I.: {FPT}-algorithm for computing the width of a simplex given by a convex hull. Moscow University Computational Mathematics and Cybernetics  \textbf{43}(1),  1--11 (2016). \doi{10.3103/S0278641919010084}

\bibitem{OnCanonicalProblems_Grib}
Gribanov, D., V., Shumilov, I., A., Malyshev, D., S., Pardalos, P., M.: On $\delta$-modular integer linear problems in the canonical form and equivalent problems. J Glob Optim  (2022). \doi{10.1007/s10898-022-01165-9}

\bibitem{Width_Grib}
Gribanov, D., V., Veselov, S., I.: On integer programming with bounded determinants. Optim. Lett.  \textbf{10},  1169--1177 (2016). \doi{10.1007/s11590-015-0943-y}, \url{https://doi.org/10.1007/s11590-015-0943-y}

\bibitem{Grunbaum}
Gr{\"u}nbaum, B.: Convex Polytopes. Graduate Texts in Mathematics, Springer-Verlag, New York (2011)

\bibitem{PolyhedralDiameters_Optimization_PHD}
Kafer, S.: Polyhedral diameters and applications to optimization. Ph.D. thesis, University of Waterloo (2022)

\bibitem{CircuitDiam_SomeCombPoly}
Kafer, S., Pashkovich, K., Sanita, L.: On the circuit diameter of some combinatorial polytopes. SIAM Journal on Discrete Mathematics  \textbf{33}(1),  1--25 (2019)

\bibitem{Diam_Larman_1970}
Larman, D.G.: Paths on polytopes. Proceedings of the London Mathematical Society  \textbf{3}(1),  161--178 (1970)

\bibitem{ModularDiffColumns}
Lee, J., Paat, J., Stallknecht, I., Xu, L.: Polynomial upper bounds on the number of differing columns of an integer program. Mathematics of Operations Research  \textbf{48}(4),  2267--2286 (2022). \doi{0.1287/moor.2022.1339}

\bibitem{Macbeath}
Macbeath, A.M.: An extremal property of the hypersphere. Proc. Cambridge Philos. Soc.  \textbf{47},  245--247 (1951). \doi{10.1017/s0305004100026542}, \url{https://doi.org/10.1017/s0305004100026542}

\bibitem{MaxFacesTh}
McMullen, P.: The maximum numbers of faces of a convex polytope. Mathematika  \textbf{17}(2),  179--184 (1970). \doi{10.1112/S0025579300002850}

\bibitem{Diam_SpectralApproach}
Narayanan, H., Shah, R., Srivastava, N.: A spectral approach to polytope diameter. Discrete \& Computational Geometry pp. 1--28 (2024)

\bibitem{LargestSimplex_Nikolov}
Nikolov, A.: Randomized rounding for the largest simplex problem. In: Proceedings of the forty-seventh annual ACM symposium on Theory of computing. pp. 861--870 (2015)

\bibitem{DifferingSeparated}
Paat, J., Stallknecht, I., Walsh, Z., Xu, L.: On the column number and forbidden submatrices for-modular matrices. SIAM Journal on Discrete Mathematics  \textbf{38}(1),  1--18 (2024)

\bibitem{HirschCounterexample}
Santos, F.: A counterexample to the hirsch conjecture. Annals of mathematics pp. 383--412 (2012)

\bibitem{BookSchneider}
Schneider, R.: Convex bodies: the {B}runn-{M}inkowski theory, Encyclopedia of Mathematics and its Applications, vol.~151. Cambridge University Press, Cambridge, expanded edn. (2014)

\bibitem{Triangulation_Gruzdev}
Shevchenko, V., N., Gruzdev, D., V.: {A modification of the Fourier-Motzkin algorithm for constructing a triangulation and star development}. Journal of Applied and Industrial Mathematics  \textbf{2}(1),  113--124 (2008). \doi{10.1134/S1990478908010122}, \url{https://doi.org/10.1134/S1990478908010122}

\bibitem{Diameter_ImprovedSubexponent}
Sukegawa, N.: An asymptotically improved upper bound on the diameter of polyhedra. Discrete \& Computational Geometry  \textbf{62},  690--699 (2019)

\end{thebibliography}

\end{document}